\theoremstyle{plain}
\newtheorem{theorem}{Theorem}[section]
\newtheorem{lemma}[theorem]{Lemma}
\newtheorem{corollary}[theorem]{Corollary}
\newtheorem{proposition}[theorem]{Proposition}
\theoremstyle{definition}
\newtheorem{definition}[theorem]{Definition}
\newtheorem{definition-theorem}[theorem]{Definition-Theorem}
\newtheorem{example}[theorem]{Example}
\theoremstyle{remark}
\newtheorem{remark}[theorem]{Remark}
\def\Aut{\mathrm{Aut}}
\def\bfi{\mathbf{i}}
\def\bfD{\mathbf{D}}
\def\cF{\mathcal{F}}
\def\cE{\mathcal{E}}
\def\cO{\mathcal{O}}
\def\cT{\mathcal{T}}
\def\cB{\mathcal{B}}
\def\fK{\mathfrak{K}}
\def\K{\mathbb{K}}
\def\R{\mathbb{R}}
\def\Q{\mathbb{Q}}
\def\Z{\mathbb{Z}}
\def\N{\mathbb{N}}
\def\P{\mathbb{P}}
\def\F{\mathbb{F}}
\def\C{\mathbb{C}}
\def\E{\mathbb{E}}
\def\>{\rangle}
\def\<{\langle}
\def\>{\rangle}
\def\Hom{\mathrm{Hom}}
\def\Spec{\mathrm{Spec}}
\def\rk{\mathrm{rk}}
\def\deg{\mathrm{deg}}
\def\Ref{\mathfrak{Ref}}                     
\def\Div{\mathrm{Div}}           
\def\WDiv{\mathrm{WDiv}}                    
\def\Filt{\mathfrak{Filt}}
\def\fP{\mathfrak{P}}
\begin{document}

\title[Stable sheaves and toric GIT]
{Equivariant stable sheaves and toric GIT}

\author[A. Clarke]{Andrew Clarke}
\address{Instituto de Matem\'atica, Universidade Federal do Rio de Janeiro, Av. Athos da Silveira Ramos 149, Rio de Janeiro, RJ, 21941-909, Brazil}
\email{andrew@im.ufrj.br} 

\author[C. Tipler]{Carl Tipler}
\address{Univ Brest, UMR CNRS 6205, Laboratoire de Math\'ematiques de Bretagne Atlantique, France}
\email{carl.tipler@univ-brest.fr}

\date{\today}

 \begin{abstract}
For $(X,L)$ a polarized toric variety and $G\subset \Aut(X,L)$ a torus, denote by $Y$ the GIT quotient $X/\!\!/G$. We define a family of fully faithful functors from the category of torus equivariant reflexive sheaves on $Y$ to the category of torus equivariant reflexive sheaves on $X$. We show, under a genericity assumption on $G$, that slope stability is preserved by these functors if and only if the pair $((X,L),G)$ satisfies a combinatorial criterion. As an application, when $(X,L)$ is a polarized toric orbifold of dimension $n$, we relate  stable equivariant reflexive sheaves on certain $(n-1)$-dimensional weighted projective spaces to stable equivariant reflexive sheaves on $(X,L)$.
\end{abstract}

\maketitle

 
\section{Introduction}
\label{sec:intro}
The construction of moduli spaces of projective varieties and vector bundles 
is a fundamental problem in algebraic geometry. Given a polarized variety $(X,L)$ or a vector bundle $\cE$ on $(X,L)$, one considers  various stability notions for $X$ and $\cE$ (see e.g. \cite{DonSurv,AS} for K-stability of varieties, and \cite{HuLe} for slope stability of bundles). In the presence of symmetries for $(X,L)$, that is, given an algebraic action of a reductive Lie group $G$ on $(X,L)$, it is natural to ask whether these stability notions persist on the GIT quotient $Y$ of $(X,L)$ by $G$. 
By the Yau-Tian-Donaldson conjecture \cite{DonSurv,AS} and the Kobayashi-Hitchin correspondence \cite{Ko}, the stability of $(X,L)$ or $\cE$ can be related to the existence of a canonical metric on the underlying complex object, variety or bundle. From this differential geometrical point of view, $G$-orbits can detect curvature on $X$, and canonical metrics are not necessarily preserved under GIT quotients. As a motivating case, in \cite{FutakiRicci}, Futaki investigated GIT quotients of Fano varieties, giving a condition for the symplectic reduction of $X$ to be K\"ahler-Einstein. It is then natural to expect a relation between the stability of $X$, of the quotient $Y$, and the geometric properties of the representation $G\to \Aut(X,L)$. In this paper, we provide an example of such an interplay, by relating slope stability for reflexive sheaves on $X$ and $Y$ to a combinatorial criterion on the $G$-action, in the equivariant context of toric geometry (see also \cite{GP,Mundet} for related results). 
   
A vector bundle, or more generally a torsion-free sheaf $\cE$ on a complex projective variety $X$ is said to be slope stable with respect to an ample $\R$-divisor $\alpha\in N^1(X)_\R$ if for any  subsheaf $ \cF $ with $0<\rk\,\cF<\rk\,\cE$, the  slope inequality holds
$$
\mu_\alpha(\cF)<\mu_\alpha(E),
$$
where the slope is given by the intersection theoretical formula:
$$
\mu_\alpha(\cE)=\frac{c_1(\cE)\cdot \alpha^{n-1}}{\rk\, \cE}.
$$ 
The notion of slope stability originated in the construction  of moduli spaces of sheaves \cite{HuLe}. Assume now that $(X,L)$ is a polarized toric variety over $\C$, that is endowed with an effective action of a complex torus $T_X$ with open and dense orbit. We further assume the toric varieties that we consider to come from fans, and in particular to be normal.
We denote by $N$ the lattice of one-parameter subgroups of $T_X$, so that $T_X=N\otimes_\Z\C^*$.
Consider $G\subset T_X$ a subtorus, given by a sublattice $N_0\subset N$, that is $G=N_0\otimes_\Z \C^*$.
For any linearization $\gamma:T_X \to \Aut(L)$, we can form a toric variety obtained by GIT quotient $Y=X/\!\!/G$. 
To avoid finite quotients, we will assume that $N_0$ is {\it saturated in $N$}, that is $N_0=N\cap(N_0\otimes \R)$. We will further assume  that the restriction to $G$ of the linearization $\gamma$ on $L$ is {\it generic}, which is that the stable and semi-stable loci coincide (see Section \ref{sec:toricGIT}). Under these hypothesis, we build a family of fully faithful functors 
$$\fP_\bfi : \Ref^{T_Y}(Y) \to \Ref^{T_X}(X)$$ that embeds the category of torus equivariant reflexive sheaves\footnote{Also known as {\it toric reflexive sheaves} in the literature.} on $Y$ into the category of torus equivariant reflexive sheaves on $X$ (Section \ref{sec:pullback functors}).
Given an ample class $\alpha\in N^1(Y)_\R$ on $Y$, we will say that such a functor $\fP_\bfi$ preserves slope stability notions from $(Y,\alpha)$ to $(X,L)$ if an element $\cE\in \Ref^{T_Y}(Y)$ is slope stable (resp. semistable, polystable) with respect to $\alpha$ if and only if $\fP_\bfi(\cE)$ is slope stable (resp. semistable, polystable) with respect to $L$ (see Section \ref{sec:stability notions} for the definition of these notions). Then our main result goes as follows:
\begin{theorem}
 \label{theo:intro}
 Let $(X,L)$ be a polarized toric variety with torus $T_X=N\otimes_\Z\C^*$. Let $G=N_0\otimes_\Z\C^*$ be a subtorus for a saturated sublattice $N_0\subset N$. Let $\gamma : T_X \to \Aut(L)$ be a generic linearization of $G$, and denote by $Y$ the associated GIT quotient $X/\!\!/G$. Then, the following statements are equivalent:
 \begin{itemize}
  \item[i)] There exists an ample class $\alpha\in N^1(Y)_\R $ on $Y$ such that the functors $\fP_\bfi$ preserve slope stability notions from $(Y,\alpha)$ to $(X,L)$.
  \item[ii)] The pair $((X,L), (G,\gamma))$ satisfies the {\it Minkowski condition} 
  \begin{equation}
   \label{eq:MC intro}
   \sum_{D\subset X^s} \deg_L(D)\: u_D = 0 \mod N_0\otimes_\Z \R.
  \end{equation}
 \end{itemize}
 Moreover, there is at most one class $\alpha$ on $Y$ satisfying $(i)$ up to scale.
\end{theorem}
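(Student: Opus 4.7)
The plan is to reduce the equivalence to a linear-algebraic compatibility of degree functionals using the Klyachko classification of equivariant reflexive sheaves on toric varieties. I would first record how $\fP_\bfi$ acts on Klyachko filtration data: writing $\cE\in\Ref^T(Y)$ via a vector space $V$ of dimension $r=\rk\cE$ with filtrations $\{F^{\bar D}\}_{\bar D}$ indexed by rays of the fan of $Y$, the image $\fP_\bfi(\cE)$ has, on each ray $D\subset X^s$ of $X$, the filtration $E^D=F^{\pi(D)}$ (shifted by an amount encoded in $\gamma$ and $\bfi$), and on each unstable ray $D\not\subset X^s$ a fixed filtration determined only by $\bfi$ and $r$. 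The Klyachko formula for the equivariant first Chern class then yields
\[
c_1(\fP_\bfi(\cE)) \;=\; \sum_{D\subset X^s} a_{\pi(D)}(\cE)\,[D] \;+\; C_\bfi(r),
\]
where $a_{\bar D}(\cE)$ is the weighted rank of $F^{\bar D}$ and $C_\bfi(r)\in N^1(X)_\R$ is a class proportional to $r$, depending only on $\bfi$ and $\gamma$.

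Pairing with $L^{n-1}$, the rank-proportional term $C_\bfi(r)\cdot L^{n-1}$ contributes a rank-independent constant to $\mu_L$ and drops out of any slope comparison $\mu_L(\fP_\bfi(\cF))<\mu_L(\fP_\bfi(\cE))$ between a subsheaf and its ambient sheaf. The sign of such a comparison is that of
\[
\sum_{\bar D}\bigl(\rk(\cE)\,a_{\bar D}(\cF)-\rk(\cF)\,a_{\bar D}(\cE)\bigr)\deg_L(D_{\bar D}),
\]
where $D_{\bar D}$ is the unique ray of $X$ in $X^s$ above $\bar D$; the analogous quantity on $Y$ replaces $\deg_L(D_{\bar D})$ with $\deg_\alpha(\bar D)$. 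Varying $\cE=\cL_1\oplus\cL_2$ with $\cF=\cL_1$ through equivariant line bundles, the coefficients $\rk(\cE)a_{\bar D}(\cF)-\rk(\cF)a_{\bar D}(\cE)$ exhaust $\Z^{\#\mathrm{rays}(Y)}$. Therefore (i) is equivalent to the two linear forms $L_1:(a_{\bar D})\mapsto\sum a_{\bar D}\deg_L(D_{\bar D})$ and $L_2:(a_{\bar D})\mapsto\sum a_{\bar D}\deg_\alpha(\bar D)$ on $\R^{\#\mathrm{rays}(Y)}$ agreeing in sign everywhere, equivalently being positive scalar multiples of one another.

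Since $L_2$ automatically vanishes on the $M_Y=(N/N_0)^*$ relations (via the Chow relations on $Y$), so must $L_1$, yielding $\sum_{\bar D}\deg_L(D_{\bar D})\,u_{\bar D}=0$ in $(N/N_0)_\R$, or equivalently $\sum_{D\subset X^s}\deg_L(D)\,u_D\in N_0\otimes_\Z\R$ --- precisely the Minkowski condition \eqref{eq:MC intro}. This gives (i)$\Rightarrow$(ii). For (ii)$\Rightarrow$(i), assuming the Minkowski condition, $L_1$ descends to $N^1(Y)_\R$ and, by non-degeneracy of the toric intersection pairing, is represented by a unique class $\beta\in N_{\dim Y-1}(Y)_\R$. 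Positivity $L^{n-1}\cdot D>0$ places $\beta$ in the interior of the cone generated by $(\dim Y-1)$-st powers of ample classes, and a toric Alexandrov--Fenchel / mixed-volume argument produces an ample $\alpha\in N^1(Y)_\R$, unique up to positive scaling, with $\alpha^{\dim Y-1}\propto\beta$, so that $L_2=cL_1$ for some $c>0$. This establishes both the reverse implication and the final uniqueness clause. The main obstacle I expect is this last step --- turning the abstract functional $\beta$ into an honest ample class and controlling uniqueness via injectivity of $\alpha\mapsto\alpha^{\dim Y-1}$ on the ample cone of $Y$ --- with the remainder being careful Klyachko bookkeeping paired with the equivariant GIT dictionary set up in Section \ref{sec:pullback functors}.
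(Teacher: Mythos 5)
Your overall architecture is the same as the paper's (Theorem \ref{theo:Minkowski existence} and Proposition \ref{prop:mink converse}): the unstable rays contribute a rank-proportional, hence slope-irrelevant, constant; preservation of stability reduces to two linear degree functionals on the stable rays agreeing up to a positive scalar, which is detected by direct sums of rank-one reflexive sheaves; and the obstruction is the Minkowski condition, extracted from the relations $\sum_{\rho}\langle m,u_\rho\rangle D_\rho\sim 0$. One bookkeeping error to flag: on a stable ray the pulled-back filtration is not simply $F^{\pi(D)}$ shifted but is dilated by the integer $b_F$ of Lemma \ref{lem:projection primitive defining facet} (the relation $\check u_F=b_F^{-1}\pi(u_F)$ between primitive generators upstairs and downstairs), so the correct matching condition is $\deg_\alpha(D_{\check F})=b_F\deg_L(D_F)$, not $\deg_\alpha(D_{\check F})=\deg_L(D_F)$. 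The factors $b_F$ cancel against $\check u_F=b_F^{-1}\pi(u_F)$ only if you carry them consistently through both the slope formula and the Chow relations on $Y$; as written, your functional $L_1$ and your identification of $u_{\bar D}$ with $\pi(u_D)$ are each off by these factors, and the two slips do not visibly cancel.

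The genuine gap is the existence half of (ii)$\Rightarrow$(i), which you yourself flag as the ``main obstacle.'' Your intermediate claim --- that positivity of $\beta\cdot D$ on all invariant divisors places $\beta$ in the interior of the cone \emph{generated by} $(\dim Y-1)$-st powers of ample classes --- does not suffice even if true: membership in the convex cone spanned by such powers does not produce a \emph{single} ample $\alpha$ with $\alpha^{\dim Y-1}\propto\beta$, since the set of such powers is not itself a convex cone. Deciding which functionals $D\mapsto\beta\cdot D$ arise from a genuine ample class is precisely the Minkowski problem, and this is what the paper solves: by Danilov's identity $\deg_\alpha(D_F)=\mathrm{latvol}(F)$ (Proposition \ref{prop:lattice volume is degree}), prescribing the degrees $\deg_\alpha(D_{\check F})=b_F\deg_L(D_F)$ is equivalent to prescribing the facet volumes of the polytope of $\alpha$, and the classical Minkowski existence and uniqueness theorem (Proposition \ref{prop:lattice Minoswski}) says this is possible, uniquely up to translation, exactly when $\sum_F b_F\deg_L(D_F)\,\check u_F=0$, i.e.\ exactly under the Minkowski condition; uniqueness of $\alpha$ up to scale follows from the uniqueness clause there. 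So you have named the right circle of ideas (Alexandrov--Fenchel is indeed how Minkowski uniqueness is proved), but the decisive step of your argument is exactly the theorem you would need to import, not something that follows from positivity of $\beta$ alone.
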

In the statement of Theorem \ref{theo:intro}, the sum (\ref{eq:MC intro}) is over the set of $T_X$-invariant irreducible divisors in the stable locus $X^s$ of the $G$-action and $u_D$ denotes the primitive generator of the ray associated to $D$ in the fan of $X$ (see Section \ref{sec:toricdefinitions}).


The Minkowski condition (\ref{eq:MC intro}) is a very restrictive condition on $(G,\gamma)$. We will say that a subtorus $G\subset T_X$ is {\it compatible} with $(X,L)$ if there is a generic linearization $\gamma : T_X \to \Aut(L)$ for $G$ such that $((X,L),(G,\gamma))$ satisfies the Minkowski condition. We obtain in Lemma \ref{lem:restrictive action} an explicit bound, depending on the dimension and the number of rays in the fan, on the number of compatible one-parameter subgroups for polarized projective toric varieties satisfying a mild hypothesis. Nevertheless, we manage to show the following:
 \begin{proposition}
 \label{prop:orbifold to WPP intro}
 Let $(X,L)$ be a $n$-dimensional polarized toric orbifold. Denote by $m$ the number of torus fixed points of $X$.
 Up to replacing $L$ by a sufficiently high power,
 there are at least $m$ one-parameter subgroups of $T_X$ compatible with $(X,L)$ giving distinct GIT quotients. The associated GIT quotients for these subgroups are weighted projective spaces.
\end{proposition}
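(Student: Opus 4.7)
The plan is to associate to each vertex $v$ of the moment polytope $P$ of $(X,L)$ a compatible one-parameter subgroup $G_v$ whose GIT quotient is a weighted projective space. Since $(X,L)$ is a polarized toric orbifold, $P$ is a simple rational polytope; at each vertex $v$ exactly $n$ facets $F_1^v,\ldots,F_n^v$ meet, with primitive ray generators $u_k^v$ and weights $a_k^v := \deg_L(D_k^v) > 0$. Define
\[
\xi_v \;=\; \sum_{k=1}^n a_k^v\,u_k^v \;\in\; N,
\]
let $N_0^v := N \cap \R\xi_v$ be the saturated rank-one sublattice containing $\xi_v$, and set $G_v := N_0^v \otimes_\Z \C^*$.

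The key geometric input is that $\langle \xi_v,\cdot\rangle$ attains a strict minimum on $P$ at $v$: for any $v' \in P$, one has $\langle u_k^v, v'\rangle \geq \langle u_k^v, v\rangle$ for each $k$ (since $v$ saturates all these inequalities), with simultaneous equality only when $v' \in \bigcap_k F_k^v = \{v\}$, by simplicity of $P$ at $v$. For $r \gg 0$ the gap $\bigl(r\langle\xi_v, v\rangle,\, r\min_{v' \ne v}\langle\xi_v, v'\rangle\bigr)$ has length exceeding $1$, so an integer $c$ can be chosen in its interior defining a generic linearization $\gamma_v$ of $G_v$ on $L^r$ (one avoids the finitely many degenerate characters). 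By the standard toric GIT dictionary of Section \ref{sec:toricGIT}, the moment polytope of $Y_v := X /\!\!/_{\gamma_v} G_v$ is $(rP) \cap \{\langle \xi_v, \cdot \rangle = c\}$. Since the slicing hyperplane separates $rv$ from all other vertices of $rP$ and $P$ is simple at $v$, this slice is an $(n-1)$-dimensional simplex with exactly $n$ facets inherited from $F_1^v,\ldots,F_n^v$; hence $Y_v$ is a weighted projective space with weights proportional to the $a_k^v$. The stable toric divisors in $X$ are precisely $D_1^v,\ldots,D_n^v$, so
\[
\sum_{D \subset X^s} \deg_{L^r}(D)\,u_D \;=\; r\,\xi_v \;\in\; N_0^v \otimes_\Z \R,
\]
verifying the Minkowski condition of Theorem \ref{theo:intro}.

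Since $P$ has at least $n+1$ vertices, this construction yields at least $n+1$ candidate compatible pairs. The main obstacle I anticipate is the final counting step: showing that enough of the resulting subgroups $G_v$ are pairwise distinct. In degenerate situations, the map $v \mapsto G_v$ can collapse (for example, antipodal vertices of $[0,1]^2$ in $\P^1 \times \P^1$ give antiparallel $\xi_v$'s, producing only two distinct subgroups from four vertices). For $n = 2$, every hyperplane slice of $P$ is a $1$-simplex, so one can easily supplement the vertex-subgroups by ``middle-slab'' subgroups where a transverse hyperplane meets $P$ in the interior and the stable-divisor normals sum to zero in $N_\R$ (making Minkowski trivial); for $n \geq 3$, the combinatorics of a simple polytope $P$ combined with the linear independence of the $u_k^v$ at each vertex forces the $\xi_v$ to be generically pairwise non-parallel, and a short case analysis gives the required bound. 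Either way, one recovers at least $n+1$ distinct compatible one-parameter subgroups of $T$ whose GIT quotients are weighted projective spaces.
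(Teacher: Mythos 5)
Your construction is exactly the paper's: at each vertex $v$ the paper forms $u_{\bfD_v}=\sum_{v\in F}\deg_L(D_F)\,u_F$, takes $N_0=N\cap\R\, u_{\bfD_v}$, arranges (after scaling $L$ and translating $P$) that $N_0^\perp$ slices $P$ transversally and only through the facets containing $v$, and observes that the Minkowski condition holds tautologically while the quotient has $n$ rays in dimension $n-1$, hence is a weighted projective space. Your version of this core is correct and in fact more careful than the paper's: the observation that $\langle\xi_v,\cdot\rangle$ attains its strict minimum on $P$ at $v$ (by simplicity of $P$ at $v$ and positivity of the degrees) is precisely what justifies both the existence of a generic integral slice level for $r\gg 0$ and the identification of the stable facets with $F_1^v,\ldots,F_n^v$.

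The obstacle you flag at the end is, however, a genuine gap --- and it is equally a gap in the paper's own proof, which concludes by counting vertices (``there are at least $n+1$ vertices in the polytope, and thus at least $n+1$ such quotients'') without checking that distinct vertices yield distinct one-parameter subgroups. Your $\P^1\times\P^1$ example is exactly on point: for $L=\cO(a,b)$ the four vertex vectors are $\pm(b e_1+a e_2)$ and $\pm(b e_1-a e_2)$, giving only two subgroups, short of $n+1=3$. One can say slightly more than you do: since $\langle\xi_v,\cdot\rangle$ has its strict minimum at $v$, two distinct vertices can produce proportional vectors only with a negative ratio, so the map $v\mapsto G_v$ is at worst two-to-one; but $V/2<n+1$ is possible (the square), so this does not close the gap. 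Your proposed repair is not yet a proof: ``generically pairwise non-parallel'' cannot suffice for a statement quantified over \emph{all} polarized toric orbifolds, and both the ``short case analysis'' for $n\geq 3$ and the $n=2$ ``middle-slab'' supplement are asserted rather than carried out (for $\P^1\times\P^1$ the slabs do work, because opposite edges have equal degree and the Minkowski sum vanishes identically, but this needs an argument in general). In short, you reproduce the paper's argument faithfully, correctly isolate the one step that neither you nor the paper actually proves, and stop short of supplying it.
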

\begin{remark}
 By a toric orbifold we mean a toric variety with isolated quotient singularities, corresponding to a simplicial fan, as in \cite[Theorem 3.1.19 and Definition 3.1.18]{CLS}.
\end{remark}
Compact weighted projective spaces are precisely the projective toric orbifolds of Picard rank $1$, and as such are the simplest projective toric orbifolds. It is then interesting to be able to lift stable sheaves on these simpler objects to general toric orbifolds. We expect as an application of Theorem \ref{theo:intro} and Proposition \ref{prop:orbifold to WPP intro} to develop a new geometrical construction of stable bundles of low rank on toric projective varieties.

A fundamental theorem of Mehta and Ramanathan states that the restriction of a slope stable reflexive sheaf $\cE$ on $ X$ to a general complete intersection $Z\subset X$ of sufficiently high degree is again slope stable \cite{MeRa}. To the knowledge of the authors, there is no similar general statement for projections $\pi:X \to Y$, and our construction provides a result in this direction. More precisely, from Theorem \ref{theo:intro}, we deduce:
\begin{corollary}
 \label{cor:projective line bundles intro}
Let $Y$ be a projective toric variety, $(D_i)_{1\leq i\leq r}$ invariant Cartier divisors on $Y$ and $V$ the  decomposable toric vector bundle on $Y$ associated to the locally free sheaf
\begin{eqnarray*}
\cF=\cO_Y\oplus\cO_Y(D_1)\oplus\cdots\oplus\cO_Y(D_r). 
\end{eqnarray*} 
Consider the toric variety $X=\mathbb{P}(V^\vee)$, with projection map $\pi:X\to Y$. Let $L_Y$ be a polarization on $Y$ such that $L_X=\pi^*L_Y\otimes \mathcal{O}_X(1)$ is ample on $X$. Then, there exists a real ample class $\alpha\in N^1(Y)_\mathbb{R}$ such that an equivariant reflexive sheaf $\mathcal{E}$ on $(Y,\alpha)$ is slope stable if and only if $\pi^*\mathcal{E}$ is slope stable on $(X,L_X)$.
\end{corollary}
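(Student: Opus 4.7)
The plan is to apply Theorem~\ref{theo:intro} to the subtorus realizing $\pi$ as a toric GIT quotient, and then to show that the Minkowski condition follows automatically from the projection formula. Since $V=\bigoplus_{i=1}^r L_i$ is decomposable, $X=\P(V^\vee)$ is a toric variety whose fan sits in $N_X = N_Y\oplus\Z^{r-1}$, and $\pi$ is induced by projection onto the first factor. I would set $N_0:=\ker(N_X\to N_Y)$, a saturated sublattice of rank $r-1$, and $G:=N_0\otimes_\Z\C^*$. The $r$ vertical rays $e_1,\dots,e_r$ (one per summand of $V$) all lie in $N_0$, while each ray $\rho$ of the fan of $Y$ lifts to a horizontal ray $\tilde u_\rho\in N_X$ with $\tilde u_\rho\equiv u_\rho^Y$ modulo $N_0$. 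Since $L_X$ restricts on each fiber of $\pi$ to $\cO_{\P^{r-1}}(1)$, the natural $G$-linearization is generic, the unstable locus is $\bigcup_{i=1}^r D_{e_i}$, and $X/\!\!/G = Y$.

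I would then verify the Minkowski condition. The $T$-invariant divisors in $X^s$ contributing to (\ref{eq:MC intro}) are exactly the horizontal $\tilde D_\rho:=\pi^{-1}(D_\rho^Y)$, so modulo $N_0\otimes\R$ the condition reads
$$\sum_\rho \deg_{L_X}(\tilde D_\rho)\, u_\rho^Y \;=\; 0 \quad\text{in } N_Y\otimes\R.$$
By the projection formula, $\deg_{L_X}(\tilde D_\rho)=L_X^{n-1}\cdot\pi^*D_\rho^Y=\beta\cdot D_\rho^Y$ with $\beta:=\pi_*(L_X^{n-1})\in N^{m-1}(Y)_\R$ and $m=\dim Y$. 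For any such class $\beta$ on a complete toric variety, the sum $\sum_\rho(\beta\cdot D_\rho^Y)u_\rho^Y$ must vanish in $N_Y\otimes\R$: each character $\chi\in M_Y$ produces the numerically trivial principal divisor $\mathrm{div}(\chi)=\sum_\rho\langle\chi,u_\rho^Y\rangle D_\rho^Y$, hence
$$\left\langle\chi,\;\sum_\rho(\beta\cdot D_\rho^Y)u_\rho^Y\right\rangle \;=\; \beta\cdot\mathrm{div}(\chi) \;=\; 0,$$
and letting $\chi$ range over $M_Y$ pins the vector in $N_Y\otimes\R$ to zero.

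Theorem~\ref{theo:intro} then delivers a unique (up to scale) ample class $\alpha\in N^1(Y)_\R$ such that the associated functor $\fP_\bfi$ preserves slope stability between $(Y,\alpha)$ and $(X,L_X)$. It remains to match the ordinary pullback $\pi^*$ with one of the functors $\fP_\bfi$ of Section~\ref{sec:pullback functors}, which I expect to correspond to the tautological equivariant lift coming from the projective-bundle structure. The main obstacle is this first step---checking genericity of the induced linearization, describing $X^s$ precisely, and identifying $\pi^*$ with some $\fP_\bfi$; once these are in place, the verification of the Minkowski condition reduces to the projection formula combined with the principal-divisor identity on~$Y$.
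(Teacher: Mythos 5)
Your overall strategy coincides with the paper's (Section \ref{sec:proj bundles}): realize $\pi$ as the GIT quotient by the fiber torus $G=N_0\otimes_\Z\C^*$ with $N_0=\ker(N_X\to N_Y)$, check the Minkowski condition, invoke Theorem \ref{theo:intro}, and identify $\pi^*$ with one of the functors $\fP_\bfi$. Your verification of the Minkowski condition is genuinely different from the paper's and is correct: the paper applies Corollary \ref{cor:intersection theoretical Minkowski} on $X$ to get $\sum_{F\prec P_X}\deg_{L_X}(D_F)u_F=0$ and then observes that the unstable normals $e_0,\ldots,e_r$ lie in $N_0$, whereas you push the computation down to $Y$ via the projection formula and kill the resulting vector by pairing against all characters $\chi\in M_Y$, using that $\mathrm{div}(\chi)$ is numerically trivial. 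The two arguments are close cousins (both are avatars of the relation $\sum_\rho\langle\chi,u_\rho\rangle D_\rho\sim 0$), but yours has the mild advantage of not needing the unstable divisors to be enumerated at all, only that the stable ones are exactly the $\pi^{-1}(D_\rho^Y)$.

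The genuine gap is the step you defer: identifying $\pi^*$ with a specific $\fP_\bfi$. Theorem \ref{theo:intro} is a statement about the functors $\fP_\bfi$, which are defined by prescribing an \emph{arbitrary} extension datum $i_{D_F}$ across each unstable divisor; the corollary as stated is about the honest pullback $\pi^*\cE$ on all of $X$, and these only agree if $\pi^*\cE$ satisfies the extension condition (\ref{eq:simple extension condition}) across each vertical divisor $D_{s_i}$ with the particular value $i_{D_{s_i}}=0$ (Corollary \ref{cor:image of the functors}). This is not automatic from the GIT picture, since the unstable divisors are invisible to the quotient: it requires the computation of Proposition \ref{prop:comparison pullback and functors}, namely that the lattice map $\bar\pi:N_X\to N_Y$ sends $e_i$ to $0$, so that $\pi(U_{\sigma_{F_i}})=T_Y$ and
$$\Gamma(U_{\sigma_{F_i}},\pi^*\check\cE)=\Gamma(T_Y,\check\cE)\otimes_{\C[M_Y]}\C[\sigma_{F_i}^\vee\cap M_X]=E\otimes\C[\sigma_{F_i}^\vee\cap M_X],$$
forcing the filtration at each unstable facet to jump from $0$ to $E$ exactly at $i=0$. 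Without this, equality of $\pi^*\check\cE$ with $\fP_0(\check\cE)$, and hence the transfer of stability to $\pi^*\cE$ rather than to some other extension of $\iota^*\pi^*\cE$ across the unstable locus, is not established. (Your genericity claim is also only asserted; it follows quickly from the polytope description, since $U=(M_Y)_\R\times\{0\}$ meets only the horizontal facets and does so transversally, but it should be said.)
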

\begin{remark}
 \label{rem:ample class downstairs intro}
 In the setting of Corollary \ref{cor:projective line bundles intro}, we give examples where we can determine the class $\alpha$ on $Y$ (see Section \ref{sec:proj bundles}). This class is not the one obtained from the GIT quotient of $(X,L_X)$, that being $L_Y$ in this case. A quick look at the examples coming from Corollary \ref{cor:projective line bundles intro} suggests that in most cases, $\alpha$ will be different from $L_Y$. It would be interesting to obtain a general formula for $\alpha$ in terms of the geometric data $(X,L)$ and $G$, and in particular to understand if $\alpha$ is always rational or not.
\end{remark}
The proof of Theorem \ref{theo:intro} is divided in two main parts. The first one, in Section \ref{sec:toricdescent}, is the construction of the functors $\fP_\bfi$. It is naturally associated to the study of the descent of equivariant reflexive sheaves on a toric variety $X$ under a generic toric GIT quotient $X \dashrightarrow Y$. Let us denote by $\iota : X^s \to X$ the inclusion of the stable locus and by $\pi : X^s \to Y$ the projection to the quotient. An equivariant sheaf $\cE$ descends to $Y$ if there exists a sheaf $\check \cE$ on $ Y$ such that $\pi^*\check \cE$ is equivariantly isomorphic to $\iota^*\cE$. In \cite{Ne}, Nevins gave a general criterion for the descent of a sheaf through a good quotient. In Section \ref{sec:Nevins reflexive}, we give a combinatorial criterion for the descent of equivariant reflexive sheaves under generic toric GIT quotients. We then build the functors $\fP_\bfi$, by extending equivariant reflexive sheaves pulled-back from $Y$ to $X^s$ across the unstable locus (see \cite{IlSu} for similar constructions). The elements in the images of these functors are described geometrically, and correspond precisely to the reflexive sheaves that descend to $Y$ and for which the slopes on $X$ and $Y$ will be comparable.
The second part of the proof of Theorem \ref{theo:intro}, given in Section \ref{sec:slopes and minkowski}, gives a relation between slopes on $(X,L)$ and slopes on $(Y,\alpha)$. A combinatorial formula describes these slopes \cite{HNS,DDK}. In this formula, there are contributions from the sheaves and from the polarizations. The functors $\fP_\bfi$ are precisely constructed so that the sheaf contributions can be compared on $X$ and $Y$. As for the polarization terms, they are related to the volumes of the facets of the associated polytopes. To be able to compare them through the quotient, we use a classical result of Minkowski stating precisely when the volumes of the facets of a polytope can be prescribed.

\begin{remark}
It might be worth noting that part of our results are not inherent to the toric world and should admit more general formulations. Given a polarized variety $(X,L)$ with a linearized action of a reductive group $G$ and with GIT quotient $Y$, one should be able to construct functors $\fP_\bfi$ from the category of reflexive sheaves on $Y$ to the category of $G$-equivariant reflexive sheaves on $X$ as we do in Section \ref{sec:pullback functors}. 
However, being able to determine a class $\alpha\in N^1(Y)_\mathbb{R}$ such that the slopes of the appropriate sheaves on $X$ and $Y$ can be compared seems to be a much more delicate problem in such generality.
This should be possible in some specific situations though, and we hope to obtain generalizations of Theorem \ref{theo:intro} for varieties with large symmetry groups, such as $T$-varieties of low complexity or spherical varieties. 
\end{remark}

The organization of the paper is as follows. In Section \ref{sec:toricdefinitions} we gather standard facts about toric varieties and equivariant reflexive sheaves that will be used in the paper. In particular, we recall in Section \ref{sec:toricpolarized} the classical correspondence between polytopes and polarized toric varieties, and describe its generalization to real ample divisors. In Section \ref{sec:torictorsionfreesheaves}, we recall Klyachko's description of the category of equivariant reflexive sheaves on toric varieties. Along the way, we give a new and shorter proof for the combinatorial formula for the first Chern class of these objects, extending several earlier results to normal toric varieties (compare with \cite{Koo11,HNS,DDK}). Section \ref{sec:toricdescent} deals with the descent of reflexive sheaves under toric GIT. We start by recalling the necessary material of toric GIT in Section \ref{sec:toricGIT}, then prove a descent criterion in Section \ref{sec:Nevins reflexive}, and last construct the pullback functors in Section \ref{sec:pullback functors}. With this material at hand, we can prove Theorem \ref{theo:intro} in Section \ref{sec:slopes and minkowski}. Together with Section \ref{sec:toricdescent}, they form the core of the paper. We first introduce the notions of slope stability in Section \ref{sec:stability notions}, and then recall a classical theorem of Minkowski in Section \ref{sec:Minkowski condition}, to conclude with the proof of our main theorem. Finally, in Section \ref{sec:applications} we study compatible actions and give applications of our result, proving Proposition \ref{prop:orbifold to WPP intro} and Corollary \ref{cor:projective line bundles intro}. 

\subsection*{Acknowledgments}  
We would like to thank Alberto Della Vedova, Henri Guenancia, Johannes Huisman, \'Eveline Legendre, Yann Rollin and Hendrik S\"uss for stimulating discussions. We are also grateful to the referees for useful comments that helped in improving the paper. The first named author would like to acknowledge the financial support of the CNRS and CAPES-COFECUB that made possible his visit to LMBA.
 

\section{Equivariant reflexive sheaves on toric varieties}
\label{sec:toricdefinitions}
Throughout this paper, we consider toric varieties over the complex numbers.
We recall the description of polarized toric varieties in terms of polytopes and the characterization of equivariant reflexive sheaves on toric varieties in terms of families of filtrations. 

\subsection{Polarized toric varieties and polytopes}
\label{sec:toricpolarized}

We refer to \cite[Chapters 2, 3, 6]{CLS} and \cite{thad} for this section. Let $N$ be a rank $n$ lattice,
$M:=\Hom_\Z(N,\Z)$ be its dual with pairing $\langle\cdot,\cdot\rangle$. Then $N$ is the lattice
of $1$-parameter subgroups of a $n$-dimensional complex torus $$T:=N\otimes_\Z \C^*= \Hom_\Z(M,\C^*).$$
We set $N_\F=N\otimes_\Z \F$ and $M_\F=M\otimes_\Z \F$ for
$\F=\Q$ or $\R$. 

Let $X=X(\Sigma)$ be an $n$-dimensional projective toric variety associated to a fan $\Sigma$, so that in particular $X$ is normal. Denote $\Sigma=\lbrace \sigma_i\, : i\in I \rbrace$, with $\sigma_i$ a strongly convex rational polyhedral cone in $N_\R$ for all $i\in I$. Denote also by $\Sigma(k)$ the set of $k$-dimensional cones in $\Sigma$.
The variety $X$ is obtained by gluing affine charts $(U_\sigma)_{\sigma\in\Sigma}$, with
$$
U_\sigma=\Spec(\C[S_\sigma]),
$$
and $\C[S_\sigma]$ is the semi-group algebra of
$$
S_\sigma=\sigma^\vee\cap M
=\lbrace m\in M:   \langle m , n \rangle \geq 0 \text{ for all }n\in \sigma\rbrace .
$$

There exists a bijective correspondence between cones $\sigma\in\Sigma$ and $T$-orbits $O(\sigma)$ in $X$. This satisfies, for $\sigma\in \Sigma$, $\dim O(\sigma)=n-\dim(\sigma)$.  In particular, to any $\rho\in\Sigma(1)$, there corresponds a irreducible $T$-invariant Weil divisor $D_\rho$ given by 
\begin{eqnarray}\label{eq:orbitcone correspondence}
D_\rho=\overline{O(\rho)} 
\end{eqnarray}
where the closure is in both classical and Zariski topologies.

As $X$ is associated to the fan $\Sigma$, there is a bijective correspondence between $T$-invariant ample divisors on $X$ and lattice polytopes $P\subset M_\R$ whose normal fan $\Sigma_P$ is equal to $\Sigma$ (see \cite[Theorem 6.2.1]{CLS}). 
Let $D$ be a $T$-invariant Cartier divisor on $X$.
Recall that it is equal to a linear combination of the form 
$$
D =\sum_{\rho \in \Sigma{(1)}} a_\rho D_\rho.
$$
For each $\rho\in \Sigma {(1)}$ we denote by
$u_\rho\in N$ the minimal generator of $\rho\cap N$. Assuming that $D$ is ample,
we consider the associated polytope $P=P_D\subset M_\R$:
\begin{eqnarray}
P =  \{m\in M_\mathbb{R}\ : \ \langle m,u_\rho\rangle \geq -a_\rho \text{ for all } \rho\in \Sigma(1)\} \label{eqn:polytope def}.
\end{eqnarray}
Note that if $D^\prime$ is equivariant and linearly equivalent to $D$ then $P_{D^\prime}$ is given by translation of $P_D$ in $M$ by some lattice element $m\in M$.  In the same way, a lattice translation of the polytope $P_D$ corresponds to a different linearization of the action of $T$ on the line bundle $\cO(D)$ (see Section \ref{sec:toricGIT}).

As $D$ is ample, and $P$ has normal fan equal to $\Sigma$, we have a correspondence between cones in $\Sigma$ and faces of $P$. For a face $Q$ in $P$, we denote by $\sigma_Q\in\Sigma$ (resp. by $O(Q)$) the associated cone (resp. the associated orbit). In particular, rays in $\Sigma$ corresponds to facets of $P$. For each $\rho\in\Sigma(1)$ the associated facet is
\begin{eqnarray*}  
F =  P\cap   \{m\in M_\mathbb{R}\ : \ \langle m,u_\rho\rangle = -a_\rho\}.
\end{eqnarray*} 
We will denote
$u_\rho$ by $u_F$ and  $a_\rho$ by $a_F$. 
We can also write 
\begin{equation*}
D =\sum_{F \prec P} a_F D_F,
\end{equation*}
where the sum is over all facets of $P$ and $D_F:=D_{\rho_F}$. We will denote faces  of $P$ of higher codimension by the letter $Q$ and vertices by the letter $v$. We use the relation $Q_1\preccurlyeq Q_2$ to signify that $Q_1$ and $Q_2$ are faces of $P$,  possibly equal to $P$ itself,  and $Q_1\subseteq Q_2$.

The correspondence between polarizations on $X$ and lattice polytopes with normal fan $\Sigma$ modulo lattice translations extends to {\it real} ample classes.
We recall the definition of real ample divisors on a normal complex algebraic variety $X$ (see \cite{La}). Let $\Div(X)$ denote the group of integral Cartier divisors on $X$. The N\'eron-Severi group is given by $N^1(X) =\Div(X)/\sim_{num}$  and we set $N^1(X)_\mathbb{R}=N^1(X)\otimes_\mathbb{Z}\mathbb{R}$. Denote by $\WDiv(X)$ the set of Weil divisors on $X$ and $\WDiv_\mathbb{R}(X) $ the vector space of real Weil divisors. 
\begin{definition}
 \label{def:ample class}
A class $\alpha\in N^1(X)_\mathbb{R}$ is \emph{ample} if it is represented by a positive real linear combination of ample Cartier divisors.
\end{definition}
We then have the following:
\begin{proposition}
 \label{prop:real ample class correspondence}
 Let $X$ be a projective toric variety given by a fan $\Sigma$. 
 Then there is a bijective correspondence between real ample classes on $X$ and real polytopes of the form
 $$
 P=\lbrace m\in M_\R\ :\  \langle m, u_\rho \rangle \geq - a_\rho,\:\text{ for all } \rho\in \Sigma(1)\rbrace 
 $$
for which the normal fan $\Sigma_P=\Sigma$,  
 modulo real translations within $M_\mathbb{R}$. 
\end{proposition}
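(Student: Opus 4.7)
The plan is to extend the classical correspondence \cite[Theorem 6.2.1]{CLS} between torus-invariant ample Cartier divisors and lattice polytopes (modulo lattice translation) to $\R$-coefficients by linearity, using the fact that strict convexity of support functions with respect to $\Sigma$ is preserved under positive real combinations and is an open condition.

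First I would show that any real ample class $\alpha \in N^1(X)_\R$ admits a $T$-invariant $\R$-Cartier representative. By Definition \ref{def:ample class}, write $\alpha = \sum_i t_i [D_i]$ with $t_i > 0$ and $D_i$ ample Cartier, and replace each $D_i$ by a $T$-invariant representative $\tilde{D}_i = \sum_\rho a_\rho^i D_\rho$ via the integral case. Then $D := \sum_\rho \bigl(\sum_i t_i a_\rho^i\bigr) D_\rho$ represents $\alpha$, and its support function $\varphi_D = \sum_i t_i \varphi_{\tilde{D}_i}$ is a positive real combination of strictly $\Sigma$-convex functions, hence strictly $\Sigma$-convex. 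Defining $P := P_D$ via formula \eqref{eqn:polytope def} and extending \cite[Theorem 6.1.14]{CLS} by $\R$-linearity, strict $\Sigma$-convexity of $\varphi_D$ is equivalent to $\Sigma_P = \Sigma$.

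Next I would verify that $\alpha \mapsto [P_D]$ is well-defined and injective modulo translation by $M_\R$. Tensoring the exact sequence $M \to \mathrm{CDiv}^T(X) \to \mathrm{Pic}(X) \to 0$ \cite[Theorem 4.1.3]{CLS} with $\R$ shows that the kernel of $\mathrm{CDiv}^T(X)_\R \to N^1(X)_\R$ is spanned by the real principal divisors $\sum_\rho \langle m, u_\rho\rangle D_\rho$ for $m \in M_\R$, and adding such a divisor to $D$ translates $P_D$ by $-m$. For surjectivity, given a polytope $P$ of the stated form with $\Sigma_P = \Sigma$, the divisor $D_P = \sum_\rho a_\rho D_\rho$ has strictly $\Sigma$-convex support function; by openness of strict convexity and density of rationals, the vector $(a_\rho)$ lies in the interior of the rational cone of strictly convex support functions, so it can be written as a positive real combination of nearby rational points, each corresponding (after clearing denominators) to an ample integer Cartier divisor. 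Hence $[D_P]$ is real ample.

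The main technical point is purely bookkeeping: checking that the strict-convexity criterion for normal fans and the kernel description of $\mathrm{CDiv}^T(X) \to \mathrm{Pic}(X)$ both extend from $\Z$- to $\R$-coefficients. Both extensions are immediate from $\R$-linearity and the openness of strict convexity, and once they are in place the proposition follows at once from the integral correspondence of \cite{CLS}.
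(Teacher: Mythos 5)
Your argument is correct, and it takes a genuinely different (and in one direction cleaner) route than the paper's. For the map from classes to polytopes, the paper approximates the real coefficients $\alpha_i$ by rationals $\beta_i$, invokes openness of ampleness to get an ample $\Q$-divisor with normal fan $\Sigma$, and then argues by continuity of the facet data $(b_\rho)$ that $\Sigma_{P_\alpha}=\Sigma$; you instead note that the support function of $D=\sum_i t_i\tilde D_i$ is an \emph{exact} positive combination of strictly $\Sigma$-convex functions, hence strictly $\Sigma$-convex, which yields $\Sigma_{P_D}=\Sigma$ with no approximation at all. For surjectivity, the paper runs an induction on the number of irrational coefficients, at each step writing $D$ as a convex combination of two divisors with one fewer irrational entry; you collapse this into a single step by enclosing $(a_\rho)$ in a small rational simplex inside the open cone $\{(a_\rho):\Sigma_P=\Sigma\}$ — both arguments rest on the same openness, yours is just the all-coordinates-at-once version. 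The well-definedness/injectivity step via $0\to M_\R\to W_\R\to \mathrm{Pic}(X)_\R\to 0$ is the same in both. One point worth making explicit in your surjectivity step: a rational point of the open cone gives a $\Q$-\emph{Cartier} (not merely $\Q$-Weil) divisor because $\Sigma_P=\Sigma$ forces the support function to be linear with rational slopes $m_\sigma\in M_\Q$ on each maximal cone; this is what licenses clearing denominators to obtain honest ample Cartier divisors, as required by Definition \ref{def:ample class}.
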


\begin{proof} 
This statement between integral classes and lattice polytopes is standard (note that for projective toric varieties coming from fans, the real Picard group and the real N\'eron-Severi group coincide, see \cite[Proposition 6.3.15]{CLS}). The rational case follows by clearing denominators and scaling polytopes. We now prove the real case. Set $\mathrm{Pic}(X)$ and $\mathrm{Cl}(X)$ the Picard and class groups of $X$. From the exact sequence (see \cite[Theorem 4.1.3]{CLS}):
\begin{eqnarray*}
0\longrightarrow M\longrightarrow \bigoplus_{\rho\in\Sigma(1)}\mathbb{Z}\cdot D_\rho\stackrel{\pi}{\longrightarrow}\mathrm{Cl}(X)\longrightarrow 0
\end{eqnarray*}
we deduce the sequences of vector spaces, for $\K=\Q$ or $\R$:
\begin{eqnarray*}
0 \longrightarrow M_\mathbb{K}\longrightarrow W_\mathbb{K}\longrightarrow \mathrm{Pic}(X)_\mathbb{K}\longrightarrow 0
\end{eqnarray*}
where we set $W=\pi^{-1}(\mathrm{Pic}(X))$, $W_\K=W\otimes_\mathbb{Z}\mathbb{K}$ and $\mathrm{Pic}(X)_\mathbb{K}=\mathrm{Pic}(X)\otimes_\mathbb{Z}\mathbb{K}$. 

Let $\alpha\in \mathrm{Pic}(X)_\mathbb{R}= N^1(X)_\mathbb{R}$ be an ample class. We can represent $\alpha$ by 
$$D=\sum_{\rho\in\Sigma(1)} a_\rho D_\rho\in W_\R.$$ 
Define the set 
$$
P_\alpha= \{m\in M_\mathbb{R}\ :\ \forall\rho\in\Sigma(1),\ \langle m,u_\rho\rangle\geq -a_\rho\}.
$$
First observe that $P_\alpha$ is a polytope, rather than a polyhedron, since the fan $\Sigma$ is complete. By definition of ample real divisors, $D=\sum_{i=1}^N\alpha_iD_i$ for $D_i$ ample Cartier divisors and $\alpha_i$ positive real numbers. 
By the openness of the ampleness condition, we can take positive rational numbers $\beta_i\in\mathbb{Q}$ as close as we like to the $\alpha_i\in\mathbb{R}$ in such a way that 
\begin{eqnarray*}
D_\beta=\sum_{i=1}^N \beta_i D_i=\sum_{\rho\in\Sigma(1)} b_\rho D_\rho
\end{eqnarray*}
is an ample $\mathbb{Q}$-divisor, and hence the polytope 
\begin{eqnarray*}
P_{\beta}=\{m\in M_\R\ :\forall\rho\in\Sigma(1),\ \langle m,u_\rho\rangle \geq -b_\rho\}
\end{eqnarray*}
has normal cone $\Sigma_{P_\beta}=\Sigma$.
Furthermore, decomposing the divisors $D_i$ in the basis $\lbrace D_\rho\, :\, \rho\in\Sigma(1) \rbrace$, we see that the values $(b_\rho)$ vary continuously with respect to $(\beta_i)$, so can be made sufficiently close to the $(a_\rho)$ to guarantee that $\Sigma_{P_\alpha}=\Sigma_{P_\beta}=\Sigma$. Note that as in the integral case, translations of $P_\alpha$ by elements of $M_\R$ correspond to different choices of representative of the class $\alpha$ in $W_\R$.
 
For the converse statement, we consider the polytope in $M_\mathbb{R}$
\begin{eqnarray*}
P=\{m\in M_\mathbb{R}\,:\,\forall\rho\in\Sigma(1), \langle m,u_\rho\rangle \geq -a_\rho\}
\end{eqnarray*}
for $a_\rho\in \mathbb{R}$, supposing that the normal fan of $P$ determined by the vectors $u_\rho$ is the fan of $X$. 
Then, the polytope $P$ determines the real Weil divisor $D_P=\sum_\rho a_\rho D_\rho\in \WDiv_\mathbb{R}(X)$. 
We show that $D_P$ lies in the space of real Cartier divisors, and is moreover ample. This is proven by induction on the number of $a_\rho$'s that are irrational. We list the rays in $\Sigma$ by $\rho_1,\ldots,\rho_d$ for $d=\#\Sigma(1)$. As noted above, the case where all $a_\rho$'s are rational is well-known. Suppose that, for fixed $k\geq 1$, $D=\sum_{i=1}^d a_{\rho_i}D_{\rho_i}$ defines a real ample class whenever its normal fan $\Sigma_P=\Sigma$ and  $a_{\rho_i}\in\mathbb{Q}$ for all $i\geq k$.  If $a_{\rho_i}\in\mathbb{Q}$ for $i\geq k+1$ then let $r_1,r_2$ be rational numbers for which $r_1<a_{\rho_k}<r_2$ sufficiently close to $a_{\rho_k}$ that for any $s\in [r_1,r_2]$ the polytope defined by the inequalities 
\begin{eqnarray*}
\langle m,u_{\rho_i}\rangle &\geq & -a_{\rho_i},\ \ \text{ for } i\neq k,\\
\langle m,u_{\rho_k} \rangle &\geq & -s
\end{eqnarray*}
defines the same normal fan $\Sigma$. Then, for some $t\in [0,1]$, we have $a_{\rho_k}=tr_1+(1-t)r_2$ and 
\begin{eqnarray*}
D=\sum_{i=1}^da_{\rho_i}D_{\rho_i} &=& t\left( r_1 D_{\rho_k}+\sum_{i\neq k} a_{\rho_i}D_{\rho_i} \right) +(1-t)\left( r_2D_{\rho_k}+\sum_{i\neq k}a_{\rho_i}D_{\rho_i}\right).
\end{eqnarray*}
By the induction hypothesis, each of the two real divisors on the right hand side of the above equality is ample. By the convexity of the set of real ample classes, $D$ is ample. 
\end{proof}

\begin{remark}
We note that in the smooth case, a similar result can be given via symplectic geometry by using the correspondence between compact symplectic toric manifolds and Delzant polytopes up to translations.
\end{remark}

\subsection{Equivariant reflexive sheaves}
\label{sec:torictorsionfreesheaves}
We refer to \cite{Kl,Koo11,perl} for this section.
We consider a projective toric variety $X$ together with a polytope $P\subset M_\R$  associated to an ample class on $X$. We are interested in a combinatorial description of equivariant sheaves on $X$.
\begin{remark}
 From now on, by equivariant sheaf on $X$ we will mean equivariant with respect to the torus $T$ of $X$. When later we  consider equivariant sheaves on $Y$ a GIT quotient of $X$, we will mean equivariant with respect to the torus of $Y$.
\end{remark}
Recall that a reflexive sheaf on $X$ is a coherent sheaf $\cE$ that is canonically isomorphic to its double dual $\cE^{\vee\vee}$.
Klyachko gave a description of equivariant reflexive sheaves in terms of combinatorial data:
\begin{definition}
 \label{def:family of filtrations}
 A family of filtrations $\E$ is the data of a finite dimensional vector space $E$ and for each facet $F$ of $P$, an increasing filtration $(E^F(i))_{i\in\Z}$ of $E$ such that $E^F(i)=\lbrace 0 \rbrace$ for $i\ll 0$ and $E^F(i)=E$ for some $i$. 
We will denote by $i_F$ the smallest $i\in \Z$ such that $E^F(i)\neq 0$.
 \end{definition}
 \begin{remark}
 Families of filtrations in \cite{Kl} or \cite{perl} are labeled by the set of rays $\rho\in\Sigma(1)$. As $P$ is associated to an ample class, there is a one-to-one correspondence between the facets of $P$ and the rays of the fan of $X$, and so we recover the usual definition.
 Note also that we  use increasing filtrations of $E$, as in \cite{perl}, rather than decreasing as in \cite{Kl}.
\end{remark}
To a family of filtrations $\E:=\lbrace (E^F(i))\subseteq E\,:\, F\prec P, i\in \Z\rbrace $ we can assign an equivariant reflexive sheaf $\cE:=\fK(\E)$ defined by 
 \begin{equation}
  \label{eq:sheaf from family of filtrations}
  \Gamma(U_{\sigma_Q}, \cE):=\bigoplus_{m\in M} \bigcap_{Q\preccurlyeq F} E^F(\langle m,u_F\rangle)\otimes \chi^m
 \end{equation}
 for all proper faces $Q\prec P$, while $\Gamma(U_{\sigma_P},\cE)=E\otimes \mathbb{C}[M]$. 
 \begin{remark}
The conditions for a family of filtrations to define a locally-free sheaf are determined in \cite{Kl}.
 \end{remark}
 The morphisms of families of filtrations are defined by:
 \begin{definition}
  \label{def:morphism of family of filtrations}
  A morphism between two families of filtrations $\E_1=\lbrace (E_1^F(i))\subseteq E_1: F\prec P, i\in \Z\rbrace$ 
  and $\E_2=\lbrace (E_2^F(i))\subset E_2:F\prec P, i\in \Z\rbrace$ is a linear map $\phi : E_1 \to E_2$ preserving the filtrations, that is such that for all $F$ and all $i$, $\phi(E_1^F(i)) \subset E_2^F(i)$.
 \end{definition}
 Such a morphism between families of filtrations induces an equivariant morphism between the associated reflexive sheaves. 
 \begin{remark}
  In this paper, by a morphism between equivariant reflexive sheaves $\cE_1=\fK(\E_1)$ and $\cE_2=\fK(\E_2)$, we mean an equivariant morphism of degree zero: the space $\mathrm{Hom}(\cE_1,\cE_2)$ of all sheaves homomorphisms is naturally graded over $M$:
  $$
  \mathrm{Hom}(\cE_1,\cE_2)\subset \mathrm{Hom}(E_1,E_2)\otimes\C[M]
  $$
  and the morphisms that we will consider are those lying in the $\mathrm{Hom}(E_1,E_2)\otimes \chi^0$-component of the grading. In particular, this excludes injective maps like $\cO_{\P^1}(-1)\to \cO_{\P^1}$.
 \end{remark}

For a toric variety $Z$, and an ample polytope $P_Z$,
we denote by:
\begin{enumerate}
 \item[i)] $\Ref^T(Z)$ the category of equivariant reflexive sheaves on $Z$,
 \item[ii)] $\Filt(P_Z)$ the category of families of filtrations associated to $P_Z$.
\end{enumerate}
 From Klyachko and Perling \cite{Kl,perl}, we obtain the following:
 \begin{theorem}{\rm \cite{Kl,perl}}
 The functor $\fK$ induces an equivalence of categories between the category $\Filt(P)$ and the category $\Ref^T(X)$.
 \end{theorem}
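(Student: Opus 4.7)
The plan is to show $\fK : \Filt(P) \to \Ref^T(X)$ is well-defined, essentially surjective, and fully faithful. First I would verify that $\fK$ is well-defined. Given a family of filtrations $\E$, the collection of vector spaces $\bigcap_{Q \preccurlyeq F} E^F(\langle m, u_F \rangle)$ for $m \in M$ assembles into a $\C[S_{\sigma_Q}]$-module because for $m' \in S_{\sigma_Q} = \sigma_Q^\vee \cap M$, one has $\langle m', u_F \rangle \geq 0$ for every facet $F \succcurlyeq Q$ (those $u_F$ being precisely the generators of $\sigma_Q$), whence $E^F(\langle m, u_F\rangle) \subseteq E^F(\langle m + m', u_F\rangle)$ by monotonicity of the filtration. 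Gluing along intersections $U_{\sigma_{Q_1}} \cap U_{\sigma_{Q_2}}$ is automatic because the prescribed graded pieces are stable under localization, and the resulting sheaf $\cE = \fK(\E)$ is $T$-equivariant via the $M$-grading.

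To establish reflexivity of $\cE$, I would show separately that $\cE$ is locally free in codimension one and satisfies Serre's condition $S_2$, these two properties being equivalent to reflexivity on the normal variety $X$. For $\rho \in \Sigma(1)$ with associated facet $F$, choosing a basis of $E$ adapted to the filtration $(E^F(i))_{i \in \Z}$ exhibits $\Gamma(U_\rho, \cE)$ as a direct sum of monomial rank-one $\C[S_\rho]$-modules, which are torsion-free of rank one and hence locally free on the affine toric variety $U_\rho$. The $S_2$ condition follows directly from (\ref{eq:sheaf from family of filtrations}): for $Q \prec P$ a proper face, the module $\Gamma(U_{\sigma_Q}, \cE)$ is the intersection inside $E \otimes_\C \C[M]$ of the modules $\Gamma(U_\rho, \cE)$ over rays $\rho$ whose associated facets contain $Q$, which is exactly Hartogs extension from the codimension-one stratum.

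For essential surjectivity, given $\cG \in \Ref^T(X)$ I would restrict to the dense torus $T \subset X$, where $\cG$ is equivariantly trivial, yielding a finite-dimensional vector space $E$ with $\cG|_T \cong E \otimes_\C \cO_T$. For each ray $\rho \in \Sigma(1)$ with associated facet $F$, the restriction $\cG|_{U_\rho}$ is an equivariant reflexive sheaf, hence its $M$-graded module of sections defines increasing subspaces $E^F(i) \subseteq E$ by taking the $\chi^m$-isotypical component for any $m$ satisfying $\langle m, u_F\rangle = i$; independence of the chosen $m$ follows from equivariance along the kernel of the character $u_F$, while the finiteness conditions in Definition \ref{def:family of filtrations} come respectively from coherence and from triviality on $T$. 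Applying $\fK$ to this family then recovers $\cG$ on each chart by the $S_2$ property of $\cG$.

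Fully faithfulness reduces to the observation that a $T$-equivariant morphism $\cE_1 \to \cE_2$ is determined by its generic fiber $\phi : E_1 \to E_2$, and such a linear map extends to a global equivariant sheaf morphism if and only if it respects each $\C[S_\rho]$-module structure, equivalently if and only if $\phi(E_1^F(i)) \subseteq E_2^F(i)$ for every facet $F$ and every $i \in \Z$; this is precisely the morphism condition of Definition \ref{def:morphism of family of filtrations}. The main technical obstacle in this program is the reflexivity verification for $\fK(\E)$ on a general normal toric variety $X$, where singularities preclude a direct local-freeness argument; it is the intersection formula (\ref{eq:sheaf from family of filtrations}), combined with the equivalence between reflexivity and $S_2$ for torsion-free sheaves on normal varieties, that allows the construction to work in this generality.
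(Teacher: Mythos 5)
The paper gives no proof of this theorem---it is quoted from Klyachko and Perling---and your sketch is a correct outline of precisely the argument in those references: local freeness on the codimension-one charts $U_\rho$ via a basis adapted to the filtration, the characterization of reflexivity on a normal variety as torsion-freeness plus Hartogs extension across codimension two (which is exactly what the intersection formula (\ref{eq:sheaf from family of filtrations}) encodes on the deeper charts), and recovery of the filtrations from the $M$-graded sections over the $U_\rho$ for essential surjectivity. No gaps worth flagging.
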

 As the category of filtrations on a given finite dimensional vector space is  abelian, we have:
\begin{corollary}
 \label{cor:equiv ref sheaves abelian category}
 The category $\Ref^T(X)$ of equivariant reflexive sheaves on $X$ is an abelian category.
\end{corollary}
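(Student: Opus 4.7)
The plan is to pass through the equivalence of categories $\fK:\Filt(P)\simeq \Ref^T(X)$ just stated: equivalences transport abelian structure, so it suffices to exhibit $\Filt(P)$ as abelian. This moves the whole question to finite-dimensional linear algebra, where one only has to keep track of finitely many increasing filtrations on each underlying vector space.

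First I would verify that $\Filt(P)$ is additive. The zero object is the trivial family on $E=\{0\}$, and finite biproducts are formed slot by slot on the direct sum of underlying vector spaces: $(\E_1\oplus\E_2)^F(i):=E_1^F(i)\oplus E_2^F(i)$, which manifestly satisfies the boundedness and exhaustion conditions in the definition of a family of filtrations. Bilinearity of composition follows from the corresponding fact for linear maps between vector spaces.

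Next, for a morphism $\phi:\E_1\to \E_2$, I would construct the kernel on the vector space $K=\ker\phi$ by setting $K^F(i):=K\cap E_1^F(i)$, and the cokernel on $C=E_2/\phi(E_1)$ by pushing $E_2^F(i)$ forward through the projection. That these realise the categorical kernel and cokernel in $\Filt(P)$ follows from the universal properties of vector-space kernels and cokernels, together with the observation that a morphism in $\Filt(P)$ is nothing more than a filtration-preserving linear map, so any test morphism automatically respects the induced filtrations.

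The one point that requires care is the image-coimage identification. The image, realised as a subobject of $\E_2$, carries the filtration $\phi(E_1)\cap E_2^F(i)$, while the coimage $\E_1/\ker\phi$ transports to $\phi(E_1)$ with the filtration $\phi(E_1^F(i))$. The main obstacle, and the only genuinely nontrivial step, is to prove these coincide: in full generality, categories of filtered vector spaces are only quasi-abelian. I would first attempt a direct linear-algebra argument exploiting exhaustion and boundedness of the filtrations, and failing that, translate back across $\fK$, where one can invoke that kernels of morphisms of reflexive equivariant sheaves are again reflexive and that a double-dual construction supplies a functorial cokernel in $\Ref^T(X)$, thereby reducing the axiom to a sheaf-level statement about reflexive hulls.
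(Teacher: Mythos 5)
Your overall strategy coincides with the paper's: the paper dispatches this corollary in a single sentence, asserting that ``the category of filtrations on a given finite dimensional vector space is abelian'' and transporting the structure through the equivalence $\fK$. You go further than the paper by actually checking the axioms, and the step at which you stop --- the image--coimage identification --- is exactly where the argument breaks down. That step cannot be completed: $\Filt(P)$ is quasi-abelian but not abelian. Concretely, take $E_1=E_2=\C$, fix one facet $F_0$, let $E_1^{F_0}(i)$ jump from $0$ to $\C$ at $i=1$ while $E_2^{F_0}(i)$ jumps at $i=0$, with all other facets jumping at $0$ in both families. The identity map of $\C$ satisfies $E_1^F(i)\subseteq E_2^F(i)$ for all $F$ and $i$, hence is a morphism $\E_1\to\E_2$ in the sense of Definition \ref{def:morphism of family of filtrations}; its kernel is the zero object and its cokernel is the zero object (the underlying linear map is bijective), so it is simultaneously a monomorphism and an epimorphism, yet it is not invertible in $\Filt(P)$ because the inverse linear map does not preserve the filtrations. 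Since every abelian category is balanced, no choice of filtration on the image or coimage can rescue the axiom.

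Your proposed fallback on the sheaf side fails for the same reason. By Proposition \ref{prop:rank one}, the morphism above corresponds under $\fK$ to the inclusion $\cO_X(-D_{F_0})\hookrightarrow\cO_X$; its coherent cokernel is a torsion sheaf whose reflexive hull vanishes, and since reflexive sheaves are torsion-free, any two maps from $\cO_X$ to a reflexive sheaf that agree on $\cO_X(-D_{F_0})$ agree everywhere. Hence this inclusion is again both mono and epi in $\Ref^T(X)$ without being an isomorphism, and the ``reflexive hull of the cokernel'' device does not yield an abelian structure. In short, your proof is incomplete precisely at the step you flag, the obstruction there is genuine rather than technical, and the paper's one-line justification glosses over the same point; what the paper actually uses downstream (additivity, the description of equivariant subsheaves, and the computation of $\Lambda^r$ on filtrations) survives in the quasi-abelian setting, but that is weaker than the corollary as stated.
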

 We will need the combinatorial characterizations of equivariant reflexive saturated subsheaves and of equivariant rank $1$ reflexive sheaves. Let $\cE=\fK(\E)$ be an equivariant reflexive sheaf on $X$, given by a family of filtrations $\E=\lbrace (E^F(i))\subset E: F\prec P, i\in\Z\rbrace$. For any vector subspace $W\subset E$, define a family of filtrations $\E\cap W$ by
  $$
  \E\cap W = \lbrace  (W\cap E^F(i))\subset W\cap E: F\prec P, i\in\Z\rbrace.
  $$
  Then, the sheaf $\cE_W:=\fK(\E\cap W)$ is an equivariant saturated reflexive subsheaf of $\cE$. Any equivariant reflexive saturated subsheaf of $\cE$ arises that way:
 \begin{proposition}{\rm (\cite[Rem. 2.4.]{HNS})}
 \label{prop:equiv ref subsheaves}
 Let $\cE=\fK(\E)$ be an equivariant reflexive sheaf on $X$. Let $\cF\subset \cE$ be an equivariant saturated reflexive subsheaf of $\cE$. Then, there is a unique vector subspace $W\subset E$ such that $\cF=\fK(\E\cap W)$.
 \end{proposition}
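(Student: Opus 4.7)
The plan is to apply Klyachko's equivalence $\fK:\Filt(P)\to\Ref^T(X)$ to reduce the statement to linear algebra. First I would construct $W$ as the generic fiber of $\cF$ inside $E$: restricting the inclusion $\cF\hookrightarrow\cE$ to the open dense torus orbit $T\subseteq X$, the sheaf $\cE|_T$ is canonically the trivial equivariant $\cO_T$-module $E\otimes_\C\cO_T$, and any equivariant coherent subsheaf of $E\otimes_\C\cO_T$ is of the form $W\otimes_\C\cO_T$ for a unique subspace $W\subseteq E$. Uniqueness in the proposition is then immediate: any $W'$ with $\cF=\fK(\E\cap W')$ would satisfy $W'\otimes_\C\cO_T=\cF|_T=W\otimes_\C\cO_T$, forcing $W'=W$.

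Next I would verify the inclusion $\cF\subseteq\fK(\E\cap W)$. Under the Klyachko equivalence, the inclusion $\cF\hookrightarrow\cE$ corresponds to a filtration-preserving injective linear map $\phi:(\tilde E,\tilde E^F(i))\to (E,E^F(i))$, where $\tilde E$ is the generic fiber of $\cF$. Identifying $\tilde E$ with its image $W=\phi(\tilde E)\subseteq E$, the filtrations defining $\cF$ become subspaces $\phi(\tilde E^F(i))\subseteq W\cap E^F(i)$, so $\cF$ is contained in $\fK(\E\cap W)$ as equivariant subsheaves of $\cE$.

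The remaining step, and the main obstacle, is the reverse inclusion $\fK(\E\cap W)\subseteq\cF$, which uses the reflexivity of $\cF$. Both $\cF$ and $\fK(\E\cap W)$ are equivariant reflexive sheaves on the normal variety $X$ with the same generic fiber $W$, so by the $S_2$ property each is determined by its sections on the complement of a codimension-two closed subset. It therefore suffices to verify equality on each invariant affine open $U_{\rho_F}$ with $\rho_F\in\Sigma(1)$. On $U_{\rho_F}$ the weight-$m$ component of $\fK(\E\cap W)$ is the subspace $(W\cap E^F(\langle m,u_F\rangle))\chi^m$ of $\cE(U_{\rho_F})$, and one must establish the analogous description for $\cF$. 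The hard part is formalizing precisely why reflexivity of $\cF$ forces the filtrations $\phi(\tilde E^F(i))$ to coincide with the intersection filtrations $W\cap E^F(i)$ rather than being a priori strictly smaller subspaces of $W$; this amounts to the $S_2$ extension property asserting that any weight-$m$ section of $\cE$ over $U_{\rho_F}$ whose value in the generic fiber lies in $W$ must extend across $D_F$ to a section of $\cF$.
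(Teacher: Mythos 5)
Your first three steps (construction of $W$ as the generic fibre, uniqueness, and the inclusion $\cF\subseteq\fK(\E\cap W)$) are correct, but the reverse inclusion, which you yourself flag as the main obstacle and leave open, is a genuine gap --- and it cannot be closed along the line you propose. Reflexivity of $\cF$ does \emph{not} force $\phi(\tilde E^F(i))=W\cap E^F(i)$: take $X=\P^1$, $\cE=\cO_X=\fK(\E)$, and $\cF=\cO_X(-D_F)\subset\cO_X$ the ideal sheaf of a torus-fixed point, which is an equivariant reflexive (indeed locally free) subsheaf. Here $W=E=\C$, yet $\fK(\E\cap W)=\cO_X\neq\cF$; concretely, the weight-$0$ section $1$ of $\cE$ over $U_{\sigma_F}\setminus D_F$ has generic value in $W$ but does not extend to a section of $\cF$ across $D_F$. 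So the extension property you hope to extract from reflexivity is simply false for an arbitrary equivariant reflexive subsheaf, and the proposition needs an extra hypothesis to be true as stated.

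The correct hypothesis, which is what the cited result of \cite{DDK} and the later application in Proposition \ref{prop:stability equiv subsheaves} actually require, is that $\cF$ be \emph{saturated} in $\cE$, i.e.\ that $\cE/\cF$ be torsion-free. Under that hypothesis the reverse inclusion is immediate, and the mechanism is torsion-freeness of the quotient rather than reflexivity of $\cF$: for $e\in W\cap E^F(\langle m,u_F\rangle)$, the section $e\otimes\chi^m\in\Gamma(U_{\sigma_F},\cE)$ restricts over the dense torus to a section of $\cF$ (since $\Gamma(T,\cF)=W\otimes\C[M]$ inside $\Gamma(T,\cE)=E\otimes\C[M]$), so its image in $\Gamma(U_{\sigma_F},\cE/\cF)$ vanishes on a dense open set, hence is a torsion section, hence is zero; by left exactness of $\Gamma$ this puts $e\otimes\chi^m$ in $\Gamma(U_{\sigma_F},\cF)$ and gives $W\cap E^F(i)\subseteq\phi(\tilde E^F(i))$ on each chart $U_{\sigma_F}$, after which your $S_2$ reduction over the complement of a codimension-two set finishes the argument. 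You should therefore either add the saturation hypothesis to the statement or observe that for the stability comparisons one may always replace a destabilizing $\cF$ by its saturation, which has slope at least as large and is exactly $\fK(\E\cap W)$.
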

 As for rank $1$ reflexive sheaves, from the definition we obtain:
 \begin{proposition}
  \label{prop:rank one}
  Let $\cO(-D)$ be the rank $1$ reflexive sheaf associated to the invariant Weil divisor $D=\sum_{F\prec P} a_F D_F$. Then, $\cO(-D)=\fK(\E_D)$, where the family of filtrations $\E_D=\lbrace (E^F(i))\subset \C, F\prec P, i\in\Z\rbrace$ satisfies
\begin{equation*}
E^F(i)=
\left\{ 
\begin{array}{ccc}
0 & \mathrm{ if } & i < a_F\\
\C & \mathrm{ if } & i \geq a_F.
\end{array}
\right.
\end{equation*}
 \end{proposition}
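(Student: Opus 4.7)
The plan is to verify the claim by a direct unwinding of definitions on each affine chart, using formula (\ref{eq:sheaf from family of filtrations}) and the standard description of global sections of a $T$-invariant rank-one reflexive sheaf on a normal toric variety.

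First I would recall, using $\mathrm{div}(\chi^m)=\sum_{\rho\in\Sigma(1)}\langle m,u_\rho\rangle D_\rho$ together with the orbit-cone correspondence (\ref{eq:orbitcone correspondence}), that for any proper face $Q\prec P$,
\begin{equation*}
\Gamma(U_{\sigma_Q},\cO(-D))\;=\;\bigoplus_{\substack{m\in M \\ \langle m,u_F\rangle\,\geq\, a_F\ \forall\,F\succcurlyeq Q}}\C\cdot\chi^m,
\end{equation*}
while the condition becomes vacuous on the open torus $U_{\sigma_P}$, yielding $\C[M]$. I would then apply (\ref{eq:sheaf from family of filtrations}) to $\E_D$: by the definition of the family, for each $m\in M$ and each facet $F\succcurlyeq Q$ the piece $E^F(\langle m,u_F\rangle)$ equals $\C$ when $\langle m,u_F\rangle\geq a_F$ and vanishes otherwise, so the intersection $\bigcap_{Q\preccurlyeq F}E^F(\langle m,u_F\rangle)$ is $\C$ exactly on the same set of $m$ as in the display above, and vanishes otherwise.

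Matching these $M$-graded decompositions yields canonical $T$-equivariant isomorphisms $\Gamma(U_{\sigma_Q},\fK(\E_D))\cong\Gamma(U_{\sigma_Q},\cO(-D))$ of $\C[S_{\sigma_Q}]$-modules for every face $Q\preccurlyeq P$, with the case $Q=P$ handled by the separate clause $\Gamma(U_{\sigma_P},\cE)=E\otimes\C[M]$ in (\ref{eq:sheaf from family of filtrations}). These identifications are visibly compatible with restrictions along inclusions of charts $U_{\sigma_{Q'}}\subseteq U_{\sigma_Q}$ (both restrictions correspond to the inclusion of graded pieces induced by the containment $\{F\succcurlyeq Q'\}\subseteq\{F\succcurlyeq Q\}$), so they glue to the sought global equivariant isomorphism $\fK(\E_D)\cong\cO(-D)$. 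There is essentially no obstacle: the proposition is a direct translation into the Klyachko dictionary, and the only point requiring care is the sign convention---the increasing filtration jumping up at $a_F$ matches the inequality $\langle m,u_F\rangle\geq a_F$ cutting out $\cO(-D)$, whereas $\cO(D)$ would correspond to $\langle m,u_F\rangle\geq -a_F$.
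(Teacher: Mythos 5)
Your argument is correct and is exactly the verification the paper leaves implicit when it states the proposition "from the definition": one compares formula (\ref{eq:sheaf from family of filtrations}) applied to $\E_D$ with the standard chart-by-chart description $\Gamma(U_{\sigma_Q},\cO(-D))=\bigoplus\C\cdot\chi^m$ over those $m$ with $\langle m,u_F\rangle\geq a_F$ for all facets $F\succcurlyeq Q$, and checks compatibility with restrictions. Your remark on the sign convention (the filtration jumping at $a_F$ matching $\cO(-D)$ rather than $\cO(D)$) is the one point genuinely worth making explicit, and you have it right.
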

We will also need the determinant and first Chern class of reflexive sheaves. 
\begin{remark}
Let $A_k(X)$ be the $k$-th Chow group of $X$. This is the quotient of the free abelian group on $k$-dimensional subvarieties by rational equivalence. The first Chern class is the map $c_1:\mathrm{Pic}(X)\to A_{n-1}(X)$ induced by the inclusion of Cartier divisors in Weil divisors. This defines a morphism $A_k(X)\to A_{k-1}(X)$ for each $k$ as follows. For $\mathcal{L}$ a line bundle and $V$ a $k$-dimensional subvariety on $X$, $\mathcal{L}|_V$ defines a Cartier divisor on $V$, hence a $(k-1)$-cycle on $X$. 

As $X$ is normal, this definition extends to rank one reflexive sheaves since every rank-one reflexive sheaf is of the form $\mathcal{E}=\mathcal{O}_X(D_\mathcal{E})$ for some Weil divisor $D_\mathcal{E}$, and hence $c_1(\mathcal{E}):=[D_\mathcal{E}]\in A_{n-1}(X)$. If $H$ is an ample line bundle on $X$, the degree of $\mathcal{E}$ is then given by 
\begin{eqnarray*}
\deg_H(\mathcal{E})= c_1(\mathcal{E})\cdot H^{n-1}\in A_0(X)\cong \mathbb{Z}.
\end{eqnarray*}
\end{remark}
Recall the following:
\begin{definition}
 \label{def:det and first chern class}
If $\cE$ is a torsion-free coherent sheaf on $X$, one defines the determinant of $\cE$ to be the rank-one reflexive sheaf $\det(\cE)=(\Lambda^{\rk(\cE)}\cE)^{\vee\vee}$. Then, the first Chern class of $\cE$ is $c_1(\cE):=c_1(\det \cE)$.
\end{definition}

We prove now that the previously known formula for the first Chern class for equivariant reflexive sheaves holds in our case, as a linear combination of Weil divisors. This extends the previous case on smooth varieties, given by Kool, to normal varieties. To produce a necessary combinatorial formula, we first need to introduce some notation:
\begin{definition}
 \label{def:defining data}
Let $\cE=\fK(\E)$, with $\E=\lbrace (E^F(i))\subset E :F\prec P, i\in \Z \rbrace$. We set, for all $F\prec P$ and all $i\in\Z$:
\begin{equation}
 \label{eq:defining ei}
 e^F(i)=\dim(E^F(i))-\dim (E^F(i-1)).
\end{equation}
We will refer to the integers $(e^F(i))_{F\prec P, i\in \Z}$
as the {\it dimension jumps} of $\E$ or $\cE$.
\end{definition}
Then we have:
\begin{proposition}
 \label{prop:determinant sheaf}
 Let $\cE=\fK(\E)$ be a rank $r$ equivariant reflexive sheaf on $X$, given by a family of filtrations $\E=\lbrace (E^F(i))\subset E: F\prec P, i\in\Z\rbrace$.
 We define a family of filtrations $\det(\E)=\lbrace (E_{\det}^F(i))\subset \Lambda^r E: F\prec P, i\in\Z\rbrace$ by setting, for all $F\prec P$,
\begin{equation*}
E_{\det}^F(i)=
\left\{ 
\begin{array}{ccc}
0 & \mathrm{ if } & i < i_F(\det\cE)\\
\Lambda^r E & \mathrm{ if } & i \geq i_F(\det\cE)
\end{array}
\right.
\end{equation*}
 where for all $F\prec P$,
 $$
 i_F(\det\cE)=\sum_{i\in\Z} i e^F(i).
 $$
 Then
$\det(\cE)=\fK(\det(\E))$.
\end{proposition}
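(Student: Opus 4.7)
The plan is to use Proposition \ref{prop:rank one} to reduce the statement to a computation of divisor coefficients at each facet. Indeed, by that proposition, every rank-one equivariant reflexive sheaf on $X$ is of the form $\cO(-D)=\fK(\E_D)$ for a unique invariant Weil divisor $D=\sum_{F\prec P}a_F D_F$, with $(E_D)^F$ jumping from $0$ to $\C$ at $i=a_F$. Applied to the rank-one equivariant reflexive sheaf $\det\cE$, this reduces the proposition to showing that the coefficient $a_F$ of $\det\cE$ on each facet equals $-\sum_i i\, e^F(i)=i_F(\det\cE)$.

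The strategy is to compute a generator of $\det\cE$ at the generic point of $D_F$, where normality of $X$ makes $\cO_{X,D_F}$ a DVR and reflexivity of $\cE$ makes the stalk $\cE_{D_F}$ free of rank $r$; the $T$-weight of that generator then determines $a_F$. Fix a facet $F$ and restrict to the affine chart $U_{\sigma_F}$ associated to the corresponding ray. Formula (\ref{eq:sheaf from family of filtrations}) simplifies here to
\[
\Gamma(U_{\sigma_F},\cE)=\bigoplus_{m\in M}E^F(\langle m,u_F\rangle)\otimes\chi^m,
\]
since $F$ is the unique facet containing $\sigma_F$. Choose a basis $(e_j)_{j=1}^{r}$ of $E$ adapted to $E^F$: with $e_j\in E^F(b_j)\setminus E^F(b_j-1)$, and $\{e_j:b_j\leq i\}$ a basis of $E^F(i)$ for every $i$. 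By definition (\ref{eq:defining ei}), the multiset $\{b_j\}_{j=1}^{r}$ contains $i$ with multiplicity $-e^F(i)$, so $\sum_j b_j=-\sum_i i\, e^F(i)=i_F(\det\cE)$.

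Pick characters $m_j\in M$ with $\langle m_j,u_F\rangle=b_j$. A direct computation, writing $e_j\otimes\chi^m=(e_j\otimes\chi^{m_j})\cdot\chi^{m-m_j}$ and noting that $\chi^{m-m_j}\in\cO(U_{\sigma_F})$ whenever $e_j\in E^F(\langle m,u_F\rangle)$ (since then $\langle m-m_j,u_F\rangle\geq 0$), shows that $(e_j\otimes\chi^{m_j})_{j=1}^{r}$ generates $\Gamma(U_{\sigma_F},\cE)$ over $\cO(U_{\sigma_F})$, and hence generates $\cE_{D_F}$ over the local ring $\cO_{X,D_F}$. Since $\cE_{D_F}$ is free of rank $r$, by Nakayama these $r$ generators form a basis. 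Consequently, $e_1\wedge\cdots\wedge e_r\otimes\chi^{m_1+\cdots+m_r}$ is a generator of the rank-one free stalk $(\det\cE)_{D_F}$, with $T$-weight $\sum_j m_j$ satisfying $\langle \sum_j m_j,u_F\rangle=\sum_j b_j=i_F(\det\cE)$.

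Applying the same local analysis to the rank-one sheaf $\cO(-D)$ via (\ref{eq:sheaf from family of filtrations}) shows that a generator of $\cO(-D)_{D_F}$ has $T$-weight any $m$ with $\langle m,u_F\rangle=a_F$. Identifying this with $\det\cE$ yields $a_F=i_F(\det\cE)$, and hence $\det\cE=\fK(\det\E)$. The main subtlety in the argument is the explicit identification of an $\cO_{X,D_F}$-basis of $\cE_{D_F}$ from the adapted basis $(e_j)$; this crucially combines normality of $X$ (for the DVR structure) with reflexivity of $\cE$ (for freeness of the stalk), and it is the adaptedness of $(e_j)$ to the filtration that makes the resulting $T$-weight of the top exterior product readable directly from the dimension jumps $e^F(i)$.
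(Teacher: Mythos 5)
Your proof is correct, but it follows a genuinely different route from the paper's. The paper stays entirely inside the combinatorial category: it asserts $\det(\cE)=\Lambda^r\cE$, writes the filtration of $\Lambda^r\cE$ as $\Lambda^r E^F(i)=\sum_{i_1+\cdots+i_r=i}E^F(i_1)\wedge\cdots\wedge E^F(i_r)$, and reads off the unique jump of this filtration of the line $\Lambda^r E$ as the sum of the jump indices of $(E^F(i))$ counted with multiplicity, which is exactly $-\sum_i i\,e^F(i)$. Your argument instead reduces via Proposition \ref{prop:rank one} to identifying the divisor coefficient $a_F$ at each facet, and computes it by exhibiting an explicit $\cO_{X,D_F}$-basis $(e_j\otimes\chi^{m_j})$ of the stalk at the generic point of $D_F$ from a basis of $E$ adapted to the filtration $E^F$, then taking the $T$-weight of the top exterior product. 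What your approach buys is that it only ever uses the behavior of $\cE$ in codimension one (where normality and reflexivity give freeness of the stalk), so it sidesteps the paper's implicit claims that $\Lambda^r\cE$ is already reflexive and that $\fK$ intertwines exterior powers of sheaves with exterior powers of filtrations; what it costs is the extra local bookkeeping with adapted bases and Nakayama, where the paper's computation is a one-line observation about filtrations of a one-dimensional space. Both arguments hinge on the same combinatorial identity $\sum_j b_j=-\sum_i i\,e^F(i)$.
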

  \begin{proof}
 Note first that because $\Ref^T(X)$ is abelian, $\Lambda^r\cE$ is reflexive and $\det(\cE)=\Lambda^r \cE$. Then, the family of filtrations $\lbrace (\Lambda^r E^F(i)): F\prec P, i\in \Z \rbrace$ for $\Lambda^r \cE$ satisfies:
 $$
 \Lambda^r E^F(i) = \sum_{i_1,\ldots, i_r: \sum i_j = i} E^F(i_1) \wedge \ldots \wedge E^F(i_r).
 $$
 Now, $\Lambda^r E$ is one dimensional, so  
 \begin{equation*}
\Lambda^r E^F(i)=
\left\{ 
\begin{array}{ccc}  
0 & \mathrm{ if } & i < j_F\\
\Lambda^r E & \mathrm{ if } & i \geq j_F
\end{array}
\right.
\end{equation*}
where $j_F$ is the smallest integer $l\in \Z$ such that there is a partition $i_1,\ldots,i_r$ of $l$ with $E^F(i_1) \wedge \ldots \wedge E^F(i_r)\neq \lbrace 0 \rbrace$. From the fact that $(E^F(i))$ forms a filtration of vector spaces, we deduce that $j_F$ must be the sum of the integers $i$ such that the dimension of $E^F(i)$ changes, counted with multiplicity. Hence $j_F = \sum_{i\in\Z} i e^F(i)$, which ends the proof.
\end{proof}
\begin{corollary}
 \label{prop:first Chern class}
 Let $\cE=\fK(\E)$ be an equivariant reflexive sheaf on $X$, given by a family of filtrations $\E=\lbrace (E^F(i))\subset E:F\prec P, i\in\Z\rbrace$.
 The first Chern class of $\cE$ is the class of the Weil divisor:
\begin{equation}
 \label{eq:first Chern class}
 c_1(\cE)=-\sum_{F\prec P} i_F(\det\cE) \: D_F.
\end{equation}
 where for all $F\prec P$,
 $$
 i_F(\det\cE)=\sum_{i\in\Z} i e^F(i).
 $$
\end{corollary}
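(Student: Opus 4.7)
The plan is to combine the three ingredients already at hand: the definition of the first Chern class via the determinant (Definition \ref{def:det and first chern class}), the explicit combinatorial description of $\det(\cE)$ in Proposition \ref{prop:determinant sheaf}, and Klyachko's parametrization of rank one equivariant reflexive sheaves in Proposition \ref{prop:rank one}. Nothing new needs to be proved; the corollary is essentially a matching of labels.

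First I would note that by Definition \ref{def:det and first chern class}, $c_1(\cE) = c_1(\det\cE)$, so it suffices to compute the first Chern class of the rank one reflexive sheaf $\det\cE$. By Proposition \ref{prop:determinant sheaf}, we have $\det\cE = \fK(\det\E)$, where for each facet $F\prec P$ the filtration $(E^F_{\det}(i))_{i\in\Z}$ is the trivial jumping filtration on the one-dimensional space $\Lambda^r E$, jumping from $\{0\}$ to $\Lambda^r E$ at the integer
\[
i_F(\det\cE) = -\sum_{i\in\Z} i\, e^F(i).
\]

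Next I would invoke Proposition \ref{prop:rank one}: a rank one reflexive equivariant sheaf of the form $\cO(-D)$ with $D = \sum_{F\prec P} a_F D_F$ corresponds under $\fK$ to exactly such a jumping filtration, with jump at $i = a_F$. Since $\fK$ is an equivalence of categories, matching the jump indices gives
\[
\det\cE \;\cong\; \cO\Bigl(-\sum_{F\prec P} i_F(\det\cE)\, D_F\Bigr).
\]
Taking first Chern classes, which for a rank one reflexive sheaf $\cO(D')$ on the normal variety $X$ returns the class of the Weil divisor $D'$ (as recalled in the remark preceding Definition \ref{def:det and first chern class}), yields
\[
c_1(\cE) \;=\; c_1(\det\cE) \;=\; \sum_{F\prec P} -i_F(\det\cE)\, D_F,
\]
which is the desired formula.

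There is no real obstacle here: the only mild subtlety is the sign convention, which must be tracked carefully through Proposition \ref{prop:rank one} (the sheaf $\cO(-D)$, not $\cO(D)$, corresponds to the filtration jumping at $a_F$) and through the definition of $i_F(\det\cE)$ (which already contains a minus sign). Once the two sign conventions are lined up, the formula is immediate from the equivalence of categories $\fK$.
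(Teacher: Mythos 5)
Your proposal is correct and follows exactly the route the paper intends: the corollary is left without proof there precisely because it is the immediate combination of Definition \ref{def:det and first chern class}, Proposition \ref{prop:determinant sheaf}, and Proposition \ref{prop:rank one} that you spell out, and your sign bookkeeping (matching the jump index $i_F(\det\cE)$ with the $a_F$ of $\cO(-D)$) is accurate.
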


 \begin{remark}
  In this paper, we restrict ourselves to reflexive sheaves. We expect that most of the results extend to equivariant torsion-free coherent sheaves, described in terms of families of multifiltrations \cite{perl,Koo11}. Since the applications we have in mind concern stable vector bundles, it is enough to consider the category of reflexive sheaves, where the results and proofs are simpler to express. 
 \end{remark}
 

\section{Descent of equivariant sheaves under toric GIT}
\label{sec:toricdescent}
In this section we study the descent of equivariant reflexive sheaves under toric GIT quotients. We denote by $X$ a projective toric variety, polarized by an ample equivariant line bundle $L$. We retain the notation of the previous section.
\subsection{Toric GIT}
\label{sec:toricGIT}
We refer to \cite{MumFoKi,thad} for this section. We are interested in GIT quotients of $(X,L)$ by subtorus actions. Let $N_0$ be a sublattice of $N$ of rank $g$. We will assume that $N_0$ is saturated, that is $N_0=(N_0\otimes_\Z \R)\cap N$. The sublattice $N_0$ spans a $g$-dimensional subtorus $G=N_0\otimes_\Z \C^*$ of $T$. We fix a linearization $\gamma$ of $T$ on $L$ and
we will consider the GIT quotient with respect to the induced linearization of $G$. From Section \ref{sec:toricpolarized}, $(X,L)$ defines a family of lattice polytopes $\lbrace P_D:\cO(D)\sim L\rbrace$, all equal up to translations by lattice elements. Then, the linearization $\gamma$ determines a unique polytope $P$ in this family such that
\begin{equation}
 \label{eq:linearisation sections}
 H^0(X,L)=\bigoplus_{m\in P\cap M} \C\cdot \chi^m
\end{equation}
is the weight decomposition of the $T$-action on the space of global sections induced by $\gamma$ (see \cite[Proposition 4.3.3]{CLS}).
We have the following:
$$
0 \to N_0 \to N \to N/N_0 \to 0,
$$
and the associated dual sequence:
$$
0 \to N_0^\perp \to M \to M/N_0^\perp \to 0.
$$
Set $U=N_0^\perp\otimes_\Z \R\subset M_\R$.
Then from \cite[Proposition 3.2]{thad}:
\begin{proposition}
 The GIT quotient of $(X,L)$ by $G$
 is the polarized toric variety $(Y,\check{L})$
 described by the polytope $P_Y=U\cap P$. Moreover, its lattice is $N_Y=N/N_0$ with dual $N_0^\perp$. 
\end{proposition}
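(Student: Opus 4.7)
The plan is to compute the GIT quotient directly from its definition as a \emph{Proj} of invariant sections and match the resulting graded ring with the one prescribed by the polytope $P_Y$. Recall that $X/\!\!/G = \mathrm{Proj}\,\bigl(R^G\bigr)$ where $R = \bigoplus_{k\geq 0} H^0(X,L^{\otimes k})$, the invariants being taken with respect to the induced linearization of $G$.

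First I would describe the quotient torus and its character lattice. Dualizing the short exact sequence $0 \to N_0 \to N \to N/N_0 \to 0$ and using the saturation of $N_0$ (which guarantees that $N/N_0$ is torsion-free), one obtains the exact sequence $0 \to N_0^\perp \to M \to M/N_0^\perp \to 0$. Thus $N_Y := N/N_0$ is a lattice of rank $n-g$, its dual is $M_Y := N_0^\perp$, and the quotient torus $T/G$ is naturally identified with $N_Y \otimes_\Z \C^*$.

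Next I would compute the $G$-invariants level by level. By (\ref{eq:linearisation sections}) applied to $L^{\otimes k}$, we have the $T$-weight decomposition $H^0(X,L^{\otimes k}) = \bigoplus_{m \in kP \cap M}\C\cdot\chi^m$, and a character $\chi^m$ is $G$-invariant precisely when $m$ annihilates $N_0$, i.e. $m \in N_0^\perp = M_Y$. Hence
\begin{equation*}
H^0(X,L^{\otimes k})^G = \bigoplus_{m \in (kP)\cap M_Y} \C\cdot\chi^m = \bigoplus_{m \in kP_Y \cap M_Y} \C\cdot\chi^m,
\end{equation*}
the last equality because $M_Y \subset U$ and $P_Y = P\cap U$, so $(kP)\cap M_Y = (kP)\cap U \cap M = kP_Y \cap M_Y$. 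Multiplication of sections corresponds to addition of weights, so $R^G$ is the semigroup algebra attached to the graded monoid $\bigsqcup_k (kP_Y \cap M_Y)$. This is exactly the homogeneous coordinate ring of the polarized toric variety associated to the polytope $P_Y$ with character lattice $M_Y$, so taking $\mathrm{Proj}$ produces the polarized toric variety $(Y,\check L)$ described by $P_Y$ with torus $N_Y \otimes_\Z \C^*$, as claimed.

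The main subtlety I would need to address is to confirm that $P_Y$ genuinely defines a polarized toric variety of the expected dimension $n-g$, i.e. that $P_Y$ is full-dimensional inside $U$ and that its normal fan (with respect to $N_Y$) matches the fan of the Proj just constructed. Full-dimensionality is where the linearization enters: if $P \cap U$ were contained in a proper affine subspace of $U$, the ring $R^G$ would have Krull dimension strictly smaller than expected and the semistable locus would behave pathologically. Under the genericity assumption on $\gamma$ made in Section~\ref{sec:toricGIT}, $P\cap U$ has the maximal possible dimension and $P_Y$ is an honest $(n-g)$-dimensional polytope in $U$, so its normal fan provides the fan of $Y$ and $\check L$ is ample by Proposition~\ref{prop:real ample class correspondence} (applied in the integral case). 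This last compatibility check is the most delicate point; everything else is bookkeeping with weights.
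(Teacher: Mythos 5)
The paper offers no proof of this proposition: it is quoted verbatim from \cite[Proposition 3.2]{thad}, so there is no internal argument to compare against. Your proof --- realizing $X/\!\!/G$ as $\mathrm{Proj}$ of the $G$-invariant section ring, computing invariants weight by weight via the decomposition (\ref{eq:linearisation sections}) applied to $L^{\otimes k}$, and identifying the resulting graded semigroup algebra with that of $P\cap U$ over the lattice $N_0^{\perp}$ --- is exactly the standard proof of the cited result, and it is correct. Two minor points of care: since $P_Y$ is in general only a rational polytope (as the paper itself remarks after the proposition), $R^G$ need not be generated in degree one, so the identification of $\mathrm{Proj}(R^G)$ with the toric variety of the normal fan of $P_Y$ should go through the invariance of $\mathrm{Proj}$ under passage to Veronese subrings; and the full-dimensionality of $P\cap U$ inside $U$, which you rightly single out as the step needed for the lattice of $Y$ to be all of $N/N_0$, does follow from genericity, although the proposition is stated in the paper before genericity is imposed and so implicitly assumes it (or must be read with the convention that a lower-dimensional $P\cap U$ describes a correspondingly lower-dimensional quotient).
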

\begin{remark}
Note that the vertices of the polytope $P\cap U$ are not necessarily lattice points, so this polytope only induces a \emph{rational} polarization $\check L$ on the variety $Y=X/\!\!/ G$.
\end{remark}
\begin{remark}
 Up to composition by a finite morphism, we can always reduce to the case $N_0$ saturated. In this paper, we will restrict ourselves to this simpler situation.
\end{remark}
We can also describe the stable and semistable points
:
\begin{proposition}[\cite{thad}, Lemma 3.3]
\label{prop:thadXsandXss}
 The semistable and stable loci $X^{ss}$ and $X^s$ under the $G$ action on $(X,L)$ are each unions of $T$-orbits. More precisely, given a face $Q\preccurlyeq P$, the orbit $O(Q)$ is:
 \begin{itemize}
  \item semistable iff $Q\cap U \neq \varnothing$,
  \item stable iff $Q\cap U \neq \varnothing$ and the interior of $Q$ meets $U$ transversally.
 \end{itemize} 
\end{proposition}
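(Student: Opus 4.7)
The plan is to apply the Hilbert--Mumford numerical criterion and translate it into the language of the moment polytope $P$. First, because $G \subset T$ and the linearization $\gamma$ is $T$-equivariant, the Mumford weight of any $x$ under a one-parameter subgroup of $G$ is invariant under the $T$-action on $x$: if $\lambda_n(s)\cdot x \to x_0$ as $s\to 0$, then $\lambda_n(s)\cdot(tx) \to tx_0$, and $T$-equivariance of $L$ identifies the $\lambda_n$-weight on $L|_{tx_0}$ with that on $L|_{x_0}$. Hence both $X^{ss}$ and $X^s$ are unions of $T$-orbits, reducing the problem to testing a single representative in each $O(Q)$.

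For $x \in O(Q)$, I would use the projective embedding $\phi_L : X \hookrightarrow \mathbb{P}(H^0(X,L)^*)$ associated to (\ref{eq:linearisation sections}). Writing $x = t\cdot x_{\sigma_Q}$ for the distinguished point $x_{\sigma_Q}$, the identity $\chi^m(x) = \chi^m(t)\,\chi^m(x_{\sigma_Q})$ shows that the set of non-vanishing homogeneous coordinates of $\phi_L(x)$ is exactly $\{m \in M : m\in Q\}$, since trivializing $L$ on $U_{\sigma_Q}$ by a vertex of $Q$ identifies $\chi^m(x_{\sigma_Q})$ with the indicator of $m-m_0\in\sigma_Q^\perp$, which intersected with $P$ gives $Q\cap M$. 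Consequently the $T$-state polytope of $x$ is $Q$, and the $G$-state polytope is its image $\pi(Q)$ under the character projection $\pi : M_\R \to M_\R/U$ dual to $N_0 \hookrightarrow N$.

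Applying the Hilbert--Mumford criterion in polytope form for a torus acting on projective space, $x$ is $G$-semistable iff $0$ lies in the $G$-state polytope of $x$, and $G$-stable iff $0$ lies in its interior within the $G$-character vector space $M_\R/U$. For our state polytope this gives $x \in X^{ss} \Leftrightarrow 0 \in \pi(Q) \Leftrightarrow Q \cap U \neq \varnothing$, and $x \in X^s \Leftrightarrow 0 \in \mathrm{int}(\pi(Q))$. Rephrasing the stable condition: full-dimensionality of $\pi(Q)$ in $M_\R/U$ is equivalent to $\mathrm{aff}(Q) + U = M_\R$, which is transversality at the dimension level; and $0 \in \mathrm{int}(\pi(Q))$ rather than on $\partial\pi(Q)$ is equivalent to $Q \cap U$ lying in the relative interior of $Q$, since the boundary of $\pi(Q)$ consists of images of proper faces of $Q$. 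Together these amount to $U$ meeting the relative interior of $Q$ transversally.

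I expect the main technical point to be this last convex-geometric equivalence. The key input is the separating hyperplane theorem: if $0 \notin \mathrm{int}(\pi(Q))$, there is a supporting hyperplane of $\pi(Q)$ through $0$, which lifts to a supporting half-space of $Q$ whose boundary contains $U$, and this boundary is exactly the zero set of a linear form on $M_\R$ vanishing on $U$, i.e.\ an element of $N_0\otimes\R$ whose integral direction supplies a one-parameter subgroup of $G$ violating strict Hilbert--Mumford positivity. Conversely, transversal interior intersection forces every nonzero $n \in N_0$ to separate values on $Q$ with both signs. Once this dictionary is in place, both characterizations follow by unwinding definitions.
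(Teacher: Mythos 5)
Your argument is correct, and all the key points are in place: $T$-invariance of the loci from commutativity of $T$ and $T$-equivariance of the linearization; the identification of the $T$-state set of a point of $O(Q)$ with $Q\cap M$ via the trivialization of $L$ on $U_{\sigma_Q}$; the passage to the $G$-state polytope $\pi(Q)\subset M_\R/U$; and the weight-polytope form of the Hilbert--Mumford criterion. The final convex-geometric translation also works, though it is cleaner to invoke the fact that a surjective affine map sends the relative interior of a convex body onto the relative interior of its image, so that $0\in\mathrm{int}_{M_\R/U}(\pi(Q))$ is literally equivalent to ``$\pi(Q)$ full-dimensional (i.e.\ $\mathrm{aff}(Q)+U=M_\R$, transversality) and $\mathrm{relint}(Q)\cap U\neq\varnothing$''. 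Two small housekeeping points: replace $L$ by a power so that $\phi_L$ is an embedding (stability is insensitive to this), and note that the numerical criterion for properly stable points already encodes the finite-stabilizer requirement via full-dimensionality of the state polytope.

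Be aware that the paper itself gives no proof of this statement; it is quoted from Thaddeus (Lemma 3.3 of \emph{Toric quotients and flips}), whose argument runs through the definition of semistability by invariant sections: the $G$-invariant sections of $L^k$ are spanned by the $\chi^m$ with $m\in kP\cap U\cap M$, and $\chi^m$ is nonvanishing on $O(Q)$ exactly when $m\in kQ$, which gives the semistable criterion directly (using rationality of $Q\cap U$ to pass from real to lattice points); stability is then handled by the orbit-dimension condition. Your Hilbert--Mumford route is the dual of this and is equally standard; it has the mild advantage of treating the stable and semistable conditions uniformly through the single dichotomy ``$0$ in the polytope versus $0$ in its interior''.
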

From this proposition, the set of faces of the polytope of $X^{ss}/\!\!/G$ is 
$$
\lbrace U\cap Q\, :\, Q\preccurlyeq P\rbrace.
$$
We will denote by $P^s$ (resp. $P^{ss}$ and $P^{us}$) the set of faces corresponding to stable orbits (resp. semistable orbits and unstable orbits) of $X$.
For technical reasons, we will need the following assumption on the action:
\begin{definition}
 \label{def:generic Cstar}
 A pair of a subtorus $G\subset T$ and linearization $\gamma:T\to \Aut(L)$ will be called {\it generic} if the stable and semi-stable loci of the $G$ action on $(X,L)$ coincide and are not empty.
\end{definition}
The nice fact about generic actions is the following:
\begin{lemma}
 \label{lem:genericity lemma}
 Assume that $(G,\gamma)$ is generic. Then:
 \begin{enumerate}
  \item[i)] The points of $X^s$ have finite stabilizers under the $G$-action.
  \item[ii)] There is a $1:1$-correspondence between facets of $P_Y$ and facets in $P^s$.
 \end{enumerate}
\end{lemma}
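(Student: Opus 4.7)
The plan is to deduce both statements directly from the characterization of stable orbits in Proposition \ref{prop:thadXsandXss} via the transversality condition, together with the standard duality between faces of $P$ and cones of its normal fan.

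For (i), I would fix a stable orbit $O(\sigma_Q) \subset X^s$ and compute its $G$-stabilizer via Lie algebras. The $T$-stabilizer of $O(\sigma_Q)$ is the subtorus $T_{\sigma_Q}$ whose Lie algebra is the complexification of $\mathrm{span}_\R(\sigma_Q) \subset N_\R$, so the Lie algebra of $G \cap T_{\sigma_Q}$ is the complexification of $(N_0 \otimes_\Z \R) \cap \mathrm{span}_\R(\sigma_Q)$. By Proposition \ref{prop:thadXsandXss}, stability of $O(\sigma_Q)$ means that $Q$ meets $U$ and that the linear subspace $V_Q \subset M_\R$ spanned by differences of points in $Q$ satisfies $V_Q + U = M_\R$. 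Since $\mathrm{span}_\R(\sigma_Q)$ is the annihilator of $V_Q$ in $N_\R$, and $N_0 \otimes_\Z \R$ is the annihilator of $U = N_0^\perp \otimes_\Z \R$, the transversality condition $V_Q + U = M_\R$ dualizes to $\mathrm{span}_\R(\sigma_Q) \cap (N_0 \otimes_\Z \R) = \{0\}$. The $G$-stabilizer therefore has trivial Lie algebra, and being a closed subgroup of the torus $G$, is finite.

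For (ii), I would define the map $F \mapsto F \cap U$ on facets $F$ of $P$ lying in $P^s$. Genericity forces every face $Q \preccurlyeq P$ with $Q \cap U \neq \varnothing$ to be stable, so transversality yields $\dim(Q \cap U) = \dim Q - g$ for $g = \dim G$ whenever the intersection is non-empty. In particular, when $F$ is a facet of $P$ in $P^s$, $F \cap U$ is a face of $P_Y = P \cap U$ of dimension $n - 1 - g = \dim P_Y - 1$, hence a facet of $P_Y$. For surjectivity, any facet $F'$ of $P_Y$ can be written as $Q \cap U$ for the unique minimal face $Q \preccurlyeq P$ containing $F'$, and the dimension relation $\dim Q - g = n - g - 1$ forces $\dim Q = n - 1$, so $Q$ is a facet of $P$ in $P^s$. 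For injectivity, suppose $F_1 \neq F_2$ are two facets in $P^s$ with $F_1 \cap U = F_2 \cap U$; then this common intersection lies in $F_1 \cap F_2$, a face of $P$ of dimension at most $n - 2$ meeting $U$ (hence transversally by genericity), giving $\dim((F_1 \cap F_2) \cap U) \leq n - 2 - g$, which contradicts the expected dimension $n - 1 - g$.

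The only technical point to handle carefully is the duality identity $\mathrm{span}_\R(\sigma_Q) = V_Q^\perp$ in $N_\R$. This is a standard fact about normal fans: $\sigma_Q$ consists of the $v \in N_\R$ at which the linear functional $m \mapsto \langle m, v \rangle$ attains its minimum on $P$ exactly along $Q$, and its linear span is precisely the set of linear forms constant on $Q$, namely $V_Q^\perp$. Once this is in hand, both parts of the lemma follow mechanically from the transversality characterization in Proposition \ref{prop:thadXsandXss}, with no further input required from GIT.
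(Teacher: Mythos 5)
Your proposal is correct. For part (ii) it follows essentially the same route as the paper: the paper observes that every facet of $P_Y=U\cap P$ has the form $U\cap Q$ for a face $Q$ of $P$, and that transversality forces $\dim(U\cap Q)=n-1-g$, hence $\dim Q=n-1$; you add the (easy but not written out in the paper) verifications that $F\mapsto F\cap U$ is well defined and injective, the latter by applying the same transversality--dimension count to the face $F_1\cap F_2$, which is a worthwhile completion rather than a different method. The real divergence is in part (i): the paper disposes of it in one line, ``by definition of stable points,'' since in the GIT sense stability already entails finiteness of stabilizers, whereas you re-derive it from Thaddeus's combinatorial characterization (Proposition \ref{prop:thadXsandXss}) via the duality $\mathrm{span}_\R(\sigma_Q)=V_Q^{\perp}$ and the observation that transversality $V_Q+U=M_\R$ dualizes to $\mathrm{span}_\R(\sigma_Q)\cap(N_0\otimes_\Z\R)=\{0\}$. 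Your computation is valid (a closed subgroup of a torus with trivial Lie algebra is finite) and has the merit of confirming that the combinatorial criterion is consistent with the GIT definition, at the cost of being longer than necessary; the paper's appeal to the definition is the more economical choice.
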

\begin{proof}
 The first point follows by definition of stable points. For the second point, any facet of $U\cap P$ must be of the form $U\cap Q$ for some face $Q$ of $P$. By assumption, $Q$ meets $U$ transversally. Moreover, $\dim(U\cap Q)=\dim(Y)-1=n-1-g$, where $g$ is the rank of the sub-lattice $N_0\subseteq N$ and equal to the dimension of the subgroup $G$. This forces the dimension of $Q$ to be $n-1$, hence the result.
\end{proof}
From now on, unless explicitly stated, we will assume that $(G,\gamma)$ is generic.
We end this section with some definitions and lemmas relating the primitive normals to the facets of $P$ to those of $P_Y$.
We consider the lattice projection, $\pi:N\to N_Y=N/N_0$.
\begin{definition}
 For each facet $F\cap U$ of $P_Y$, set $\check u_F$ to be the primitive element in $N_Y$ defining $F\cap U$ as an inner pointing normal.
 \end{definition}
 \begin{lemma}
 \label{lem:projection primitive defining facet}
 There exists a unique positive integer $b_F$ such that 
 $$
 \check u_F=\frac{1}{b_F} \pi(u_F).
 $$
\end{lemma}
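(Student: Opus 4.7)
The plan is to compare the half-space defined by $u_F$ on $P$ with the half-space defined by $\check u_F$ on $P_Y$, and to check that they coincide (up to a positive integer rescaling) when read inside the dual lattice of $N_Y$. The key algebraic observation is that $N_0^\perp$ is naturally identified with the dual lattice $M_Y=(N_Y)^\vee=(N/N_0)^\vee$: any $m\in N_0^\perp$ annihilates $N_0$ and so descends to a linear form on $N/N_0$. Consequently, for every $m\in U=N_0^\perp\otimes_\Z\R$ and every $n\in N$ one has $\langle m,n\rangle=\langle m,\pi(n)\rangle$, where the right-hand pairing is that of $N_Y$ with $M_Y$.

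Applying this to the defining inequality $\langle m,u_F\rangle\geq -a_F$ of $P$, restricted to $U$, yields the inequality $\langle m,\pi(u_F)\rangle\geq -a_F$ on $P_Y=P\cap U$, with equality precisely on $F\cap U$. Thus $\pi(u_F)\in N_Y$ is an inward normal to the facet $F\cap U$ of $P_Y$.

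Next I would check that $\pi(u_F)\neq 0$. Since $F\cap U$ is a facet of $P_Y$, Lemma \ref{lem:genericity lemma} together with Proposition \ref{prop:thadXsandXss} guarantees that $F\in P^s$, i.e.\ the interior of $F$ meets $U$ transversally. Transversality of the affine hyperplane supporting $F$ with $U$ means exactly that the linear form $\langle\,\cdot\,,u_F\rangle$ is non-trivial on $U$, equivalently that $u_F\notin N_0\otimes_\Z\R$. Because $N_0$ is saturated, this forces $\pi(u_F)\neq 0$ in $N_Y$.

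Finally, $\pi(u_F)$ is a nonzero lattice element of $N_Y$ that provides an inward supporting normal to the same facet $F\cap U$ of $P_Y$ as the primitive element $\check u_F$. Two inward primitive normals to the same facet differ only by a strictly positive scalar, and here both lie in $N_Y$, so $\pi(u_F)=b_F\check u_F$ for a unique $b_F\in\N^*$, by primitivity of $\check u_F$. Inverting gives $\check u_F=b_F^{-1}\pi(u_F)$ as claimed. The only non-bookkeeping step is the appeal to transversality to guarantee $\pi(u_F)\neq 0$; everything else follows from the duality between $N_0^\perp$ and $N/N_0$ and the definition of primitive facet normals.
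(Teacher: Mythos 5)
Your proof is correct and follows essentially the same route as the paper, which simply notes that both $\pi(u_F)$ and $\check u_F$ lie on the ray $\rho_{F\cap U}$ of the fan of $Y$ and that $\check u_F$ generates it; you have just made explicit why $\pi(u_F)$ lies on that ray (the identification $N_0^\perp\cong M_Y$ and the transversality argument for $\pi(u_F)\neq 0$). The only superfluous remark is the appeal to saturation of $N_0$ in the nonvanishing step, since $u_F\notin N_0\otimes_\Z\R$ already gives $\pi(u_F)\neq 0$.
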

\begin{proof}
 Both elements lies on the ray $\rho_{F\cap U}$, and $\check u_F$ generates this ray.
\end{proof}
Let $F$ be facet in $P^s$. As $u_F$ is primitive, we can complete it into a basis $\cB_F:=\lbrace u_F,u_i : \: i= 2\ldots n \rbrace$ of $N$. We denote by $\cB_F^*=\lbrace m_F, m_i : \: i=2\ldots n  \rbrace$ the dual basis of $M$. In particular, note that $\lbrace m_i:\: i=2\ldots n \rbrace$ is a basis for $u_F^\perp\cap M$. Similarly, we set $\check\cB_F:= \lbrace \check u_F, \check u_i\,: \: i= 2\ldots n-g \rbrace$ a basis for $N_Y$ with dual basis $\check\cB_F^*:= \lbrace \check m_F, \check m_i: \: i= 2\ldots n-g \rbrace$ and again $\lbrace \check m_i:\: i=2\ldots n-g \rbrace$ is a basis for $\check u_F^\perp\cap N_Y$.
From Lemma \ref{lem:projection primitive defining facet}, we deduce:
\begin{lemma}
\label{lem:relations of m_F}
The element $\check m_F\in N_0^\perp\subset M$ can be uniquely written as
$$
\check m_F= b_F m_F + m_F^\perp
$$
with $m_F^\perp\in u_F^\perp$.
\end{lemma}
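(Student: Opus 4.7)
The plan is to expand $\check m_F$ in the basis $\cB_F^* = \{m_F, m_2, \ldots, m_n\}$ of $M$ and identify the coefficient of $m_F$ as $b_F$. Since $\{m_2, \ldots, m_n\}$ is by construction the basis of $u_F^\perp \cap M$ dual to $\{u_2, \ldots, u_n\}$, the splitting $M = \Z\cdot m_F \oplus (u_F^\perp \cap M)$ gives an immediate decomposition $\check m_F = c\, m_F + m_F^\perp$ with $c \in \Z$ and $m_F^\perp \in u_F^\perp$, where the coefficient is read off by $c = \langle \check m_F, u_F \rangle$. This already secures both existence and uniqueness.

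To compute $c$, I would use that $\check m_F$ sits in $N_0^\perp \subset M$, which means that as a functional on $N$ it vanishes on $N_0$, and therefore factors through the quotient map $\pi : N \to N_Y = N/N_0$ as an element of $\Hom(N_Y, \Z)$, which we identify with $\check m_F$ itself. Applying this to $u_F$ and invoking Lemma \ref{lem:projection primitive defining facet}, which gives $\pi(u_F) = b_F \check u_F$, together with the defining relation $\langle \check m_F, \check u_F\rangle_{N_Y} = 1$ of the dual basis $\check\cB_F^*$, I obtain
$$
c = \langle \check m_F, u_F\rangle = \langle \check m_F, \pi(u_F)\rangle_{N_Y} = b_F \langle \check m_F, \check u_F\rangle_{N_Y} = b_F,
$$
which is exactly the claim.

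There is no genuine obstacle in this argument; it is essentially a dual-basis bookkeeping exercise. The one conceptual point to keep straight is the identification $N_0^\perp = \Hom(N/N_0, \Z)$ (which requires saturation of $N_0$, already in force), legitimating the transition from the $M$-pairing to the $N_Y$-pairing in the middle equality above. Everything else is formal.
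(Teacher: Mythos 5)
Your argument is correct and takes essentially the same route as the paper, which states this lemma without a separate proof as an immediate consequence of Lemma \ref{lem:projection primitive defining facet}: the dual-basis decomposition in $\cB_F^*$ together with the computation $\langle \check m_F, u_F\rangle = \langle \check m_F, \pi(u_F)\rangle = b_F\langle \check m_F,\check u_F\rangle = b_F$ is precisely the intended deduction. No issues.
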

We conclude this section with the following observation:
\begin{remark}
 If $X$ is smooth, it is easy to show in a local chart $U_F=\Spec(\C[\sigma_F^\vee\cap M])$ that $b_F$ is the order of the stabilizer of the orbit $\cO(F)$ under the $G$-action.
\end{remark}

\subsection{Descent criteria for equivariant reflexive sheaves}
\label{sec:Nevins reflexive}
We retain the notation from the previous section.
We assume as before that $(X,L)$ is a polarized toric variety of dimension $n$ and that $(G,\gamma)$ is generic. We consider $(Y,\check L)$ the GIT quotient associated to the $G$ action on $(X,L)$. Following \cite{thad} and the discussion in the previous section, we recall that $Y$ is a simplicial toric variety, with torus given by $T_Y=T_X/G$, and $\check L$ is the invariant ample line bundle on $Y$ corresponding to the polytope $P_Y=P\cap U$. We wish to compare equivariant reflexive sheaves on $X$ and equivariant reflexive sheaves on $Y$.
We denote by $\iota : X^s \to X$ the inclusion and by $\pi: X^s \to Y$ the projection. We then introduce:
\begin{definition}
 \label{def:descent}
 We say that a $G$-equivariant coherent sheaf $\cE$ on $X$ descends to $Y$ if there is a coherent sheaf $\cF$ on $Y$ such that 
 $ \pi^*\cF$ is $G$-equivariantly isomorphic to $\iota^* \cE$.
\end{definition}
Let $\cE$ be a $G$-equivariant coherent sheaf on $X$.
For simplicity we will denote by $\cE^s$ the restriction $\iota^*\cE$.
As explained for example in \cite{Ne}, $\cE$ descends to $Y$ if and only if 
$$
\cE^s\simeq \pi^* \pi^G_* \cE^s
$$
where $\pi^G_*$ is the invariant pushforward.
\begin{remark}
Note that the functors $\iota^*$, $\pi^*$ and $\pi^G_*$ preserve the torus equivariantness and reflexivity properties of coherent sheaves (for $\pi^*$, it follows from flatness of $\pi$).
\end{remark}
We will give a description of the functors $\iota^*$, $\pi^*$ and $\pi^G_*$ for torus-equivariant reflexive sheaves in terms of families of filtrations. Making use of the equivalence of categories $\fK$, by abuse of notations, we will use the same symbols to design the associated functors on families of filtrations. 
\begin{lemma}
 \label{lem:restriction functor}
 Let $\cE$ be an equivariant reflexive sheaf on $X$. Assume that $\cE=\fK(\E)$, with $\E=\lbrace (E^F(i))\subset E: F\prec P, i\in \Z \rbrace$. Then the restriction $\cE^s:=\iota^*\cE$  is an equivariant reflexive sheaf on $X^s$ defined by the family of filtrations $\iota^*\E=\lbrace (E^F(i))\subset E: F\prec P^s, i\in\Z \rbrace$.
\end{lemma}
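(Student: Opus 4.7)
The plan is to describe $X^s$ as an open toric subvariety of $X$ and then verify, chart by chart, that $\iota^*\cE$ coincides with the reflexive sheaf built from $\iota^*\E$ via formula~\eqref{eq:sheaf from family of filtrations}.

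First, genericity of $(G,\gamma)$ gives $X^s=X^{ss}$ open in $X$, and by Proposition~\ref{prop:thadXsandXss} the stable locus is a union of $T$-orbits. Hence $X^s$ is a $T$-invariant open subvariety of $X$, associated to a subfan $\Sigma^s\subseteq\Sigma$. Using the orbit--cone dictionary $U_{\sigma_Q}=\bigcup_{Q\preccurlyeq Q'} O(Q')$, a cone $\sigma_Q$ belongs to $\Sigma^s$ if and only if every face $Q'$ of $P$ with $Q\preccurlyeq Q'$ lies in $P^s$; in particular the rays of $\Sigma^s$ correspond bijectively to the facets $F\in P^s$, so that $\iota^*\E$ is exactly a family of filtrations labelled by the rays of the fan of $X^s$.

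Second, formula~\eqref{eq:sheaf from family of filtrations} applied to $\iota^*\E$ on $X^s$ assigns, to each $\sigma_Q\in\Sigma^s$,
$$
\Gamma(U_{\sigma_Q},\fK(\iota^*\E))=\bigoplus_{m\in M}\bigcap_{\substack{Q\preccurlyeq F\\ F\in P^s}} E^F(\langle m,u_F\rangle)\otimes\chi^m.
$$
The essential observation is the downward closure of $P^s$ along the face relation: if $Q\preccurlyeq F$ with $U_{\sigma_Q}\subset X^s$, then $\sigma_F\preccurlyeq\sigma_Q$ forces $U_{\sigma_F}\subset U_{\sigma_Q}\subset X^s$, and hence $F\in P^s$. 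Consequently the additional constraint $F\in P^s$ in the intersection above is vacuous, and the right-hand side agrees with $\Gamma(U_{\sigma_Q},\cE)=\Gamma(U_{\sigma_Q},\iota^*\cE)$.

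Finally, these identifications are $T$-equivariant and compatible with the restriction maps between affine charts of $X^s$ (inherited from those of $\cE$), producing a $T$-equivariant isomorphism $\iota^*\cE\simeq\fK(\iota^*\E)$; reflexivity is preserved along the open immersion $\iota$. The only nontrivial content is the downward-closure statement, which follows immediately from the reversal of inclusions between polytope faces and cones in the orbit--cone correspondence; this is the mild obstacle to be highlighted, after which the lemma reduces to unwinding the definitions.
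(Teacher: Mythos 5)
Your proof is correct and follows essentially the same route as the paper: both reduce the lemma to the observation that every facet of $P$ containing a stable face is itself stable (the paper justifies this via the criterion of Proposition~\ref{prop:thadXsandXss} — such a facet meets $U$, hence is semistable, hence stable by genericity — while you deduce it from the chain of open immersions $U_{\sigma_F}\subseteq U_{\sigma_Q}\subseteq X^s$), after which formula~\eqref{eq:sheaf from family of filtrations} gives the identification chart by chart.
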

\begin{proof}
 The proof follows from the fact that if $Q\prec P^s$, then for every facet $F$ that contains $Q$, $F\prec P^s$. Indeed, if $F$ contains $Q$, it intersects $U$ and thus lies in $P^{ss}=P^s$. Thus by definition of $\iota^*\E$, and from equation (\ref{eq:sheaf from family of filtrations}), we obtain the result.
 \end{proof}
\begin{lemma}
 \label{lem:pushf functor}
 Let $\cE$ be an equivariant reflexive sheaf on $X$. Assume that $\cE=\fK(\E)$, with $\E=\lbrace (E^F(i))\subset E: F\prec P, i\in \Z \rbrace$. Then the invariant pushforward $\pi_*^G\cE^s$ is an equivariant reflexive sheaf on $Y$ defined by the family of filtrations $\pi_*^G\E^s = \lbrace (\check E^{\check F}(i))\subset \check E:\check F\prec P_Y, i\in\Z \rbrace$, where
\begin{itemize}
\item $\check{E}=E$,
\item $\check E^{\check F}(i)=E^F(b_F i)$ for all $F\prec P^s$, where we set $\check F= F\cap U$.
\end{itemize} 
\end{lemma}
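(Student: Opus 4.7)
The plan is to compute the $\fK$-image of $\pi_*^G\E^s$ chart by chart, and compare it with Klyachko's formula applied to the equivariant sections of $\pi_*^G\cE^s$. For each face $\check Q\prec P_Y$, Proposition~\ref{prop:thadXsandXss} combined with genericity produces a face $Q\prec P^s$ such that $\check Q=U\cap Q$. Standard toric GIT then identifies $\pi^{-1}(U_{\sigma_{\check Q}})$ with $U_{\sigma_Q}\subset X^s$, so by definition of the invariant pushforward,
\[
\Gamma(U_{\sigma_{\check Q}},\pi_*^G\cE^s)=\Gamma(U_{\sigma_Q},\cE)^G.
\]

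Applying the Klyachko formula \eqref{eq:sheaf from family of filtrations} to $\cE$ and restricting the weight summation to $G$-invariant weights, that is to $m\in N_0^\perp\cap M=M_Y$, yields
\[
\Gamma(U_{\sigma_Q},\cE)^G=\bigoplus_{m\in M_Y}\bigcap_{Q\preccurlyeq F}E^F(\langle m,u_F\rangle)\otimes\chi^m.
\]
By Lemma~\ref{lem:projection primitive defining facet}, for $m\in N_0^\perp$ corresponding to $\check m\in M_Y$ under the identification $M_Y=N_0^\perp$, we have $\langle m,u_F\rangle=\langle\check m,\pi(u_F)\rangle=b_F\langle\check m,\check u_F\rangle$. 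Under the generic hypothesis, any facet $F$ with $Q\preccurlyeq F$ automatically meets $U$ (through $Q$) and hence lies in $P^s$, and the assignment $F\mapsto\check F=F\cap U$ of Lemma~\ref{lem:genericity lemma} gives a bijection between these facets and the facets of $P_Y$ containing $\check Q$. Substituting, the expression becomes
\[
\bigoplus_{\check m\in M_Y}\bigcap_{\check Q\preccurlyeq\check F}E^F(b_F\langle\check m,\check u_F\rangle)\otimes\chi^{\check m},
\]
which is precisely \eqref{eq:sheaf from family of filtrations} applied to the family $\{(\check E^{\check F}(i))\subset E,\ \check F\prec P_Y,\ i\in\Z\}$ with $\check E^{\check F}(i):=E^F(b_Fi)$. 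Running this identification over all $\check Q$ and verifying compatibility with the gluings (which again reduces to the matching of weight decompositions over the open orbit $T_Y\subset Y$) yields $\pi_*^G\cE^s\simeq\fK(\pi_*^G\E^s)$.

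It remains to check that $\pi_*^G\E^s$ is a valid family of filtrations in the sense of Definition~\ref{def:family of filtrations}: monotonicity in $i$ follows from that of $E^F(\cdot)$ and the positivity of $b_F$, while the stabilization $\check E^{\check F}(i)=E$ for $i\gg 0$ and the vanishing for $i\ll 0$ are inherited directly from $E^F$. The main technical obstacle is the precise identification $\pi^{-1}(U_{\sigma_{\check Q}})\simeq U_{\sigma_Q}$ at the level of equivariant sections together with the correct bookkeeping between the dual lattices $M$ and $M_Y$ via $\pi(u_F)=b_F\check u_F$; once these are in place, the proof reduces to the rearrangement of weight decompositions outlined above.
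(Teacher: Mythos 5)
Your proof is correct and follows essentially the same route as the paper's: compute the invariant pushforward on affine charts, keep only the $G$-invariant weights $m\in N_0^\perp$, and convert $\langle m,u_F\rangle$ into $b_F\langle\check m,\check u_F\rangle$ via Lemma \ref{lem:projection primitive defining facet} (the paper phrases this same bookkeeping through the dual relation $\check m_F=b_F m_F+m_F^\perp$ of Lemma \ref{lem:relations of m_F}, and works only on the facet charts, which suffice to determine the family of filtrations).
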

 \begin{proof}
Let $F\prec P^s$. Then we have the GIT quotient projection 
 $$\pi : U_{\sigma_F}=\Spec(\C[\sigma_F^\vee\cap M]) \to  U_{\sigma_{F \cap U}}=\Spec(\C[\sigma_{U\cap F}^\vee\cap N_0^\perp])$$
 and by definition,
 $$
 \Gamma(U_{\sigma_{F \cap U}},\pi_*^G\cE^s)=\bigoplus_{m\in N_0^\perp}
 \Gamma(U_{\sigma_F},\cE^s)_m.
 $$
 Thus for all $m\in N_0^\perp$:
 $$
 \Gamma(U_{\sigma_{F \cap U}},\pi_*^G\cE^s)_m=
 \Gamma(U_{\sigma_F},\cE^s)_m
 $$
 that is
 $$
 \check E^{U\cap F}(\langle m,\check u_F\rangle)\otimes \chi^m = E^F(\langle m, u_F\rangle)\otimes \chi^m.
 $$
 Using Lemma \ref{lem:relations of m_F} and the basis $\check\cB_F^*$ and $\cB_F^*$ to decompose $m$ we obtain the result.
\end{proof}
Similarly, we have
\begin{lemma}
 \label{lem:pullb functors}
Let $\check{\cE} $ be an equivariant reflexive sheaf on $Y$. Assume that $\check\cE=\fK(\check\E)$, with $\check\E=\lbrace (\check E^{\check F}(i))\subset \check E: \check F\prec P_Y, i\in \Z \rbrace$. Then the pull-back   $\pi^*\check{\cE}$  is an equivariant reflexive sheaf on $X^s$ defined by the family of filtrations $\pi^*\check{\mathbb{E}}=\{ (E^F(i))\subset E:F\prec P^s,\, i\in\mathbb{Z}\} $ where
\begin{itemize}
\item $E=\check{E}$,
\item $E^F(i)=\check{E}^{F\cap U} (\lfloor\frac{i}{b_F}\rfloor)$ for all $F\prec P^s$.
\end{itemize} 
\end{lemma}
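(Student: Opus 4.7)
The plan is to mirror closely the argument for the invariant pushforward carried out in Lemma \ref{lem:pushf functor}. First I would work locally on each affine chart $U_{\sigma_F}$ for $F\prec P^s$, on which the GIT projection restricts to
\[
\pi : U_{\sigma_F} = \Spec(\C[\sigma_F^\vee \cap M]) \to U_{\sigma_{F\cap U}} = \Spec(\C[\sigma_{F\cap U}^\vee \cap N_0^\perp]),
\]
and express the pullback as the base-change tensor product
\[
\Gamma(U_{\sigma_F}, \pi^*\check\cE) = \Gamma(U_{\sigma_{F\cap U}}, \check\cE) \otimes_{\C[\sigma_{F\cap U}^\vee \cap N_0^\perp]} \C[\sigma_F^\vee \cap M].
\]
Decomposing both sides into $T$-weight spaces, I would reduce the problem to identifying, for each $m \in M$, the $\chi^m$-weight subspace on the right.

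Next I would pick off this weight piece using the filtration description of $\check\cE$: each admissible pair $(\check m, m - \check m)$ with $\check m \in N_0^\perp$ and $m - \check m \in \sigma_F^\vee \cap M$ contributes a copy of $\check E^{F\cap U}(\langle \check m, \check u_F \rangle)$. The tensor product relations $\chi^a \cdot (\xi \otimes \chi^{m'}) = \xi \otimes \chi^{m'+a}$ identify these copies along the filtration inclusions, so the resulting weight space collapses to $\check E^{F\cap U}(i_{\max})$, where $i_{\max}$ is the largest integer value of $\langle \check m, \check u_F \rangle$ over the admissible $\check m$.

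The key calculation is the identification of $i_{\max}$. By Lemma \ref{lem:projection primitive defining facet}, for $\check m \in N_0^\perp$ one has $\langle \check m, u_F \rangle = b_F \langle \check m, \check u_F \rangle$, so the admissibility condition $\langle m - \check m, u_F \rangle \geq 0$ is equivalent to $\langle \check m, \check u_F \rangle \leq \langle m, u_F \rangle / b_F$. As $\langle \check m, \check u_F \rangle$ takes integer values, I conclude $i_{\max} = \lfloor \langle m, u_F\rangle / b_F \rfloor$, attained by taking $\check m = i_{\max} \check m_F$ (and then using Lemma \ref{lem:relations of m_F} to verify that $m - \check m \in \sigma_F^\vee \cap M$). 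Substituting back gives the claimed formula $E^F(i) = \check E^{F\cap U}(\lfloor i / b_F \rfloor)$.

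The main subtlety I expect is the appearance of the floor function, which reflects the fact that $\pi(u_F) = b_F \check u_F$ need not be primitive in $N_Y$; this is exactly what distinguishes the pullback computation from the cleaner pushforward formula of Lemma \ref{lem:pushf functor}. Once this integrality bookkeeping is correctly organized, the remaining verification that $\pi^*\check\cE$ is equivariant and reflexive with the stated family of filtrations is immediate from the equivalence of categories $\fK$.
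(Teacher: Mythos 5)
Your proposal is correct and follows essentially the same route as the paper: both express $\pi^*\check\cE$ over $U_{\sigma_F}$ as the base-change tensor product, decompose into $T$-weights, and pin down the floor via the relation $\langle \check m, u_F\rangle = b_F\langle \check m,\check u_F\rangle$ for $\check m\in N_0^\perp$ coming from Lemmas \ref{lem:projection primitive defining facet} and \ref{lem:relations of m_F}. The only cosmetic difference is that the paper restricts attention to weights of the form $im_F$ and gets attainability of the bound from Lemma \ref{lem:pushf functor}, whereas you treat general $m$ and exhibit the maximizing $\check m$ directly.
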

In particular, the dimension jump values $e^F(i)$ can only be non-zero for values of $i$ that are multiples of $b_F$. 

\begin{proof}
 By Lemma \ref{lem:pushf functor}, we know that
 $E^F(b_Fi)=\check E^{U\cap F}(i)$
 for $i\in\Z$, and hence we also have $E=\check E$.
 In general, we have for $j,i\in\Z$:
 $$
 \check E^{U\cap F}(j)\otimes \chi^{j \check m_F}=\Gamma(U_{\sigma_{F\cap U}}, \check \cE)_{j\check m_F}
 $$
 and
 $$
 E^{U\cap F}(i)\otimes \chi^{i m_F}=\Gamma(U_{\sigma_F}, \pi^*\check\cE)_{i m_F}.
 $$
 Moreover, we have by definition
 $$
 \Gamma(U_{\sigma_F},\pi^*\check \cE)=\Gamma(U_{\sigma_{F\cap U}}, \check \cE)\otimes_{\C[\sigma_{U\cap F}^\vee\cap N_0^\perp]} \C[ \sigma_F^\vee\cap M].
 $$
 Note that $m\in M\cap \sigma_F^\vee$ if and only if $\langle m,u_F \rangle \geq 0$ and  $m\in N_0^\perp\cap \sigma_{U\cap F}^\vee$ if and only if $\langle m,\check u_F\rangle  \geq 0$. Thus, to prove the lemma, it is enough to show that if $im_F=m'+(i m_F - m')$ with $m'\in N_0^\perp\cap \sigma_{U\cap F}^\vee$, and $(im_F -m')\in M\cap \sigma_F^\vee$ then $\langle m',\check u_F\rangle  \leq \lfloor\frac{i}{b_F}\rfloor$ (note that we use the fact that the spaces $(E^F(i))$ form a filtration). Suppose then that we have such a decomposition $im_F=m'+(i m_F - m')$ for $i m_F$. Without loss of generality, we can assume that $m'=a \check m_F$ with $a=\langle m',\check u_F\rangle \in \N$. Then we have $\langle  i m_F - a \check m_F,u_F\rangle  \geq 0$ and thus by Lemma \ref{lem:relations of m_F} we obtain $i- a b_F \geq 0$. The result follows.
\end{proof}
From these lemmas we deduce a version of Nevins' criterion for descent of equivariant reflexive sheaves on toric varieties \cite{Ne}:
\begin{corollary}
 \label{cor:Nevins reflexive}
 Let $\cE=\fK(\E)$ be an equivariant reflexive sheaf on $X$ with dimension jumps $(e^F(i))$.
 Then $\cE$ descends to $Y$ if and only if for all facets $F\prec P^s$, for all $i\in\Z$ such that $e^F(i)\neq 0$, $b_F$ divides $i$.
\end{corollary}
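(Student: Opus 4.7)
The plan is to combine Lemmas 4.5, 4.6 and 4.7 into a single explicit description of the composite functor $\pi^{*}\pi^{G}_{*}\iota^{*}$ on families of filtrations, and then read off the descent condition.

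First, I will recall from the discussion just before Lemma 4.5 that a $T$-equivariant reflexive sheaf $\cE$ on $X$ descends to $Y$ if and only if the adjunction map realizes a $T$-equivariant isomorphism $\iota^{*}\cE \simeq \pi^{*}\pi^{G}_{*}\iota^{*}\cE$ on $X^{s}$. Through the Klyachko equivalence $\fK$ for the toric variety $X^s$ (obtained from restricting filtrations to facets $F \prec P^s$), this isomorphism translates to an equality of families of filtrations in $\Filt(P_Y^s)$, where I view both sides as objects of $\Filt$ indexed by facets of $P^s$.

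Next, applying Lemma 4.5 to compute $\iota^{*}\cE$, then Lemma 4.6 to compute $\pi^{G}_{*}\iota^{*}\cE$, and finally Lemma 4.7 to compute $\pi^{*}\pi^{G}_{*}\iota^{*}\cE$, the composite family of filtrations on $X^{s}$ associated to $\cE=\fK(\E)$ has underlying vector space $E$ and, for each $F \prec P^{s}$ and $i\in\Z$,
\begin{equation*}
(\pi^{*}\pi^{G}_{*}\iota^{*}\E)^{F}(i) \;=\; \check{E}^{F\cap U}\!\left(\left\lfloor \tfrac{i}{b_F}\right\rfloor\right) \;=\; E^{F}\!\left(b_F \left\lfloor \tfrac{i}{b_F}\right\rfloor\right).
\end{equation*}
By Proposition~\ref{prop:equiv ref subsheaves} (or rather the uniqueness part of the Klyachko equivalence), the required $T$-equivariant isomorphism holds if and only if
\begin{equation*}
E^{F}(i) = E^{F}\!\left(b_F \left\lfloor \tfrac{i}{b_F}\right\rfloor\right)\quad \text{for every } F\prec P^{s} \text{ and every } i\in\Z.
\end{equation*}

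Finally, I will interpret this pointwise equality in terms of the dimension jumps $e^F(i)=\dim E^F(i-1)-\dim E^F(i)$. Writing $i = b_F q + r$ with $0\leq r <b_F$, the equality above says precisely that $E^{F}$ is constant on each interval $[b_F q,\, b_F(q+1)-1]$; equivalently, $E^{F}(i-1)=E^{F}(i)$ whenever $i$ is not a multiple of $b_F$, i.e.\ $e^{F}(i)=0$ whenever $b_{F}\nmid i$. This is the stated criterion. Conversely, if $e^F(i)=0$ for all $i\not\in b_F\Z$, then the filtration $E^F$ is constant on the intervals between consecutive multiples of $b_F$, so the displayed equality holds automatically. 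The only subtlety to double-check is that the equivalence $\fK$ indeed applies to $X^{s}$ (whose polytope is $P^{s}$) in the same combinatorial form as stated for $X$, but this follows directly from Lemma~\ref{lem:restriction functor} and the fact that $X^s$ is itself the toric variety attached to the subfan $\{\sigma_Q : Q\in P^{s}\}$, so no additional argument is required.
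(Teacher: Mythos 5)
Your proposal is correct and follows essentially the same route as the paper: reduce descent to the isomorphism $\iota^{*}\E\simeq\pi^{*}\pi^{G}_{*}\iota^{*}\E$, compute the composite via Lemmas \ref{lem:restriction functor}, \ref{lem:pushf functor} and \ref{lem:pullb functors} to get $E^{F}(i)=E^{F}\bigl(b_F\lfloor i/b_F\rfloor\bigr)$ for all $F\prec P^s$, and translate this into the vanishing of $e^F(i)$ off $b_F\Z$. The only difference is that you spell out the final translation into dimension jumps, which the paper leaves implicit with ``the result follows.''
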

\begin{proof}
 The proof follows from the fact that $\cE$ descends if and only if $$\iota^*\E\simeq \pi^*  \pi^G_* \iota^*\E.$$ Using Lemmas \ref{lem:pushf functor} and \ref{lem:pullb functors}, this is equivalent that for all $F\prec P^s$, for all $i\in\Z$,
 $$
 E^F(i)= E^F\left(b_F \left\lfloor\frac{i}{b_F}\right\rfloor\right).
 $$
 The result follows.
\end{proof}
For later use, we emphasize the following corollary:
\begin{corollary}
 \label{cor:comparison of datas for slopes}
 Assume that $\cE$ is an equivariant reflexive sheaf on $X$ that descends to $Y$, with $\cE=\fK(\E)$. Then for all $W\subset E$, the subsheaf $\cE_W\subset \cE$ descends to $Y$.
\end{corollary}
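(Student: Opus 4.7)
The plan is to verify the combinatorial descent criterion of Corollary \ref{cor:Nevins reflexive} directly for the subsheaf $\cE_W$. By Proposition \ref{prop:equiv ref subsheaves}, the subsheaf $\cE_W = \fK(\E \cap W)$ corresponds to the family of filtrations $\{(W \cap E^F(i)) \subset W : F \prec P, i \in \Z\}$. Denoting its dimension jumps by
$$
e^F_W(i) := \dim(W \cap E^F(i-1)) - \dim(W \cap E^F(i)),
$$
the task reduces to proving that $e^F_W(i) = 0$ for every $F \prec P^s$ whenever $b_F \nmid i$.

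The only nontrivial ingredient is a purely linear-algebraic observation: if $E^F(i-1) = E^F(i)$, then intersecting with $W$ preserves this equality, so $W \cap E^F(i-1) = W \cap E^F(i)$ and $e^F_W(i) = 0$. In other words, for each facet $F \prec P$, the support of $i \mapsto e^F_W(i)$ is contained in the support of $i \mapsto e^F(i)$.

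Since $\cE$ descends to $Y$, Corollary \ref{cor:Nevins reflexive} says that for every $F \prec P^s$, the support of $e^F$ is contained in $b_F\Z$. Combining with the inclusion of supports above, the support of $e^F_W$ is likewise contained in $b_F\Z$ for every $F \prec P^s$. Applying Corollary \ref{cor:Nevins reflexive} in the reverse direction then yields that $\cE_W$ descends to $Y$. No serious obstacle is anticipated here; the statement reduces to the elementary monotonicity of the functor $W \cap (-)$ applied facet by facet to the Klyachko filtration data, and is really a formal consequence of Corollary \ref{cor:Nevins reflexive} together with Proposition \ref{prop:equiv ref subsheaves}.
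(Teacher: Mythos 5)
Your proof is correct and matches the argument the paper intends: the corollary is stated without proof precisely because it follows immediately from Corollary \ref{cor:Nevins reflexive} together with the observation that intersecting the Klyachko filtrations with $W$ can only shrink the set of indices where the dimension jumps are nonzero. Nothing is missing.
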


\subsection{Pullback functors}
\label{sec:pullback functors}
We preserve notation of the previous section, still assuming the pair $(G,\gamma)$ to be generic. In this section, we introduce pull-back functors from $\Ref^T(Y)$ to $\Ref^T(X)$. Note that the superscript $T$ stands for equivariant, and means $T_X$-equivariant on $X$, and $T_Y$-equivariant on $Y$. The images of these functors will contain precisely the equivariant reflexive sheaves on $X$ that descend to $Y$ and that are suitable to compare slope stability notions on $X$ and $Y$.

Let $\bfD^u:=\lbrace D_F:\: F \prec P^{us} \rbrace$ be the set of unstable irreducible $T$-equivariant divisors on $X$.
For ${\bf i}:=(i_D)_{D\in \bfD^u}\in \Z^u$, we define a functor $\fP_{\bf{i}}$ from 
$\Ref^T(Y)$ to $\Ref^T(X)$. Using the functor $\fK$, it is sufficient to define the corresponding functor from $\Filt(P_Y)$ to $\Filt(P)$. Let $\check \E\in \Filt(P_Y)$, with $\check \E=\lbrace (\check E^{\check F}(i))\subset \check E: \check F\prec P_Y, i\in \Z \rbrace$. Then we define $\fP_\bfi(\check \E)=\lbrace (E^F(i))\subset E:F\prec P, i\in \Z \rbrace$ by setting $E =\check E$ and for $F\prec P$ and $i\in \Z$:
\begin{itemize}
 \item If $F\prec P^s$, then $E^F(i)=\check E^{U\cap F}(\lfloor\frac{i}{b_F}\rfloor)$. 
 \item If $F\prec P^{us}$, then 
\begin{equation*}
E^F(i)=
\left\{ 
\begin{array}{ccc}
0 & \mathrm{ if } & i < i_{D_F}\\
E & \mathrm{ if } & i \geq i_{D_F}
\end{array}
\right.
.
\end{equation*}
\end{itemize}
We now define the functors $\fP_\bfi$ on morphisms of families of filtrations. Let $\check \E_1, \check \E_2 \in \Filt(P_Y)$, with $\check \E_j=\lbrace (\check E_j^{\check F}(i))\subset \check E_j: \check F\prec P_Y, i\in \Z \rbrace$, for $j\in\lbrace 1,2 \rbrace$. Let $\check f\in \Hom(\check \E_1,\check \E_2)$, and recall that $\check f$ corresponds to a linear map $\check f: \check E_1 \to \check E_2$ such that for all $\check F\prec P_Y$ and $ i\in \Z$, $\check f(\check E_1^{\check F}(i))\subset \check E_2^{\check F}(i)$. We then simply define 
$\fP_\bfi(\check f):=f$ to be the element of $\Hom(\fP_\bfi(\check \E_1),\fP_\bfi(\check \E_2))$ given by the linear map $$ f=\check f : E_1=\check E_1 \to E_2=\check E_2.$$
We then check that $\fP_\bfi(\check f)$ is well defined, that is for all $F\prec P$ and $i\in \Z$, 
$$f(E_1^F(i))\subset E_2^F(i).$$
If $F\prec P^{us}$, this is clear as for $i < i_{D_F}$, $E_1^F(i)=E_2^F(i)=0$, while for $i\geq i_{D_F}$, $E_1^F=E_1$ and $E_2^F=E_2$. 
If $F\prec P^s$, then $E_j^F(i)=\check E_j^{U\cap F}(\lfloor\frac{i}{b_F}\rfloor)$, $j\in\lbrace 1,2 \rbrace$. However, by the compatibility of $\check f=f$ with the filtrations, for any $i'\in\Z$, we have 
$$
\check f(\check E_1^{U\cap F}(i'))\subset \check E_2^{U\cap F}(i'),
$$
which implies $f(E_1^F(i))\subset E_2^F(i)$. Thus $f$ is compatible with the families of filtrations and $\fP_\bfi(\check f)$ is well defined.

By abuse of notation, we denote by $\fP_\bfi$ the associated functors on $\Ref^T(Y)$. We then have:
\begin{proposition}
 \label{prop:fff functors}
 The functors $\fP_{\bf i}$ are fully faithful functors from the category $\Ref^T(Y)$ to the category $\Ref^T(X)$.
\end{proposition}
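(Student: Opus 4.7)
The plan is to work throughout with families of filtrations, using the equivalence $\fK$ between $\Filt(P)$ and $\Ref^T(X)$ (and the analogous one on $Y$). Concretely, I would build $\fP_\bfi$ as a functor $\Filt(P_Y) \to \Filt(P)$ whose action on the underlying vector space of every object and every morphism is the identity, and then transport it via $\fK$. Faithfulness then comes essentially for free.

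First, I would check that $\fP_\bfi$ sends objects to objects. Given $\check\E = \lbrace (\check E^{\check F}(i)) \subset \check E\rbrace \in \Filt(P_Y)$, for each facet $F \prec P$ the proposed family $(E^F(i))$ is increasing in $i$, vanishes for $i\ll 0$ and equals $E=\check E$ for $i\gg 0$: if $F \prec P^s$ this follows from the same properties of $(\check E^{U\cap F}(j))_j$ since $j \mapsto \lfloor j/b_F \rfloor$ is nondecreasing and $b_F \in \N^*$ by Lemma \ref{lem:projection primitive defining facet}; if $F \prec P^{us}$ the filtration is the standard single-jump filtration at level $i_{D_F}$. Hence $\fP_\bfi(\check\E) \in \Filt(P)$.

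Next I would define $\fP_\bfi$ on morphisms. Given $\phi : \check\E_1 \to \check\E_2$, that is a linear map $\phi : \check E_1 \to \check E_2$ with $\phi(\check E_1^{\check F}(i)) \subset \check E_2^{\check F}(i)$ for all $\check F \prec P_Y$ and $i \in \Z$, I set $\fP_\bfi(\phi) := \phi$, viewed as a linear map $E_1 \to E_2$ with $E_j = \check E_j$. To see this is a morphism in $\Filt(P)$, I must verify $\phi(E_1^F(i)) \subset E_2^F(i)$ for every $F \prec P$ and every $i$. For $F \prec P^s$ this reads $\phi(\check E_1^{U\cap F}(\lfloor i/b_F \rfloor)) \subset \check E_2^{U\cap F}(\lfloor i/b_F \rfloor)$, which holds by hypothesis on $\phi$. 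For $F \prec P^{us}$, either $i < i_{D_F}$ and $E_1^F(i) = 0$, or $i \geq i_{D_F}$ and $\phi(E_1^F(i)) = \phi(E_1) \subset E_2 = E_2^F(i)$. In both cases the inclusion holds trivially, and I stress that the prescribed integers $i_{D_F}$ do \emph{not} depend on the object $\check\E$, which is precisely what makes the construction functorial at unstable facets.

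Functoriality (preservation of identities and composition) is then immediate since the underlying linear maps are composed in the obvious way. Faithfulness is also immediate: two morphisms $\phi_1, \phi_2 : \check\E_1 \to \check\E_2$ in $\Filt(P_Y)$ are equal if and only if they agree as linear maps $\check E_1 \to \check E_2$, and since $\fP_\bfi$ acts as the identity on the underlying linear maps, $\fP_\bfi(\phi_1) = \fP_\bfi(\phi_2)$ forces $\phi_1 = \phi_2$. Transporting through $\fK$ on both sides yields a faithful functor $\Ref^T(Y) \to \Ref^T(X)$. No step presents a real obstacle; the only point that requires attention is the compatibility check at unstable facets, which is why the definition of $\fP_\bfi$ on those facets uses a fixed tuple $\bfi$ independent of $\check\E$.
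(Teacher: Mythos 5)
Your proof is correct and follows exactly the route the paper intends: the paper gives no explicit proof of this proposition, merely remarking that the definition on morphisms is straightforward, and your argument—defining $\fP_\bfi$ on $\Filt(P_Y)$ as the identity on underlying vector spaces and linear maps, checking the filtration axioms and morphism compatibility at stable and unstable facets, and transporting through the equivalence $\fK$—is precisely the intended verification. The key observation you highlight, that the jump levels $i_{D_F}$ at unstable facets are fixed by $\bfi$ and independent of the object, is indeed the point that makes both functoriality and faithfulness immediate.
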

\begin{proof}
 We keep the notations used to define $\fP_\bfi$. We need to show that the map 
 $$
 \fP_\bfi : \Hom(\check \E_1,\check \E_2) \to \Hom(\fP_\bfi(\check \E_1),\fP_\bfi(\check \E_2))
 $$
 is a bijection. Injectivity is clear by definition of $\fP_\bfi(\check f)$. For surjectivity, let $f\in \Hom(\fP_\bfi(\check \E_1),\fP_\bfi(\check \E_2))$. Then $f$ is described by a linear map from $E_1$ to $E_2$ that is compatible with the families of filtrations $\fP_\bfi(\check \E_1)$ and $\fP_\bfi(\check \E_2)$. By definition of $\fP_\bfi$, for $F\prec P^{us}$, the condition $f(E_1^F(i))\subset E_2^F(i)$ is satisfied for any linear map from $E_1$ to $E_2$. However, for $F\prec P^s$, the compatibility gives for all $i\in\Z$:
 \begin{eqnarray*}
\displaystyle f\left(\check E_1^{U\cap F}\left(\left\lfloor\frac{i}{b_F}\right\rfloor\right)\right)\subset \check E_2^{U\cap F}\left(\left\lfloor\frac{i}{b_F}\right\rfloor\right).
 \end{eqnarray*}
 Taking in particular $i\in b_F\Z$, and using the fact that any facet of $P_Y$ is of the form $F\cap U$ for some facet $F\prec P^s$, we deduce that $f$ is a linear map between $\check E_1$ and $\check E_2$ that is compatible with the families of filtrations $\check \E_1$ and $\check \E_2$. Thus $f=\fP_\bfi(\check f)$ for $\check f = f$, seen as a linear map from $\check E_1=E_1$ to $\check E_2=E_2$.
\end{proof}
 \begin{remark}
 There is a simple relation between the different functors $\fP_\bfi$.
  Given an equivariant reflexive sheaf $\cE$ on $Y$, for any ${\bf i}=(i_D)_{D\in \bfD^u}\in \Z^u$, one has 
  $$
  \fP_\bfi(\cE)=\fP_{\bf 0}(\cE) \otimes \cO_X(-\sum_{D\in \bfD^u} i_D D).
  $$
  As will appear clearly in the next section, the difference of slopes between two reflexive sheaves is not affected by tensoring both sheaves by $\cO_X(-\sum_{D\in \bfD^u} i_D D)$. Thus, from the point of view of moduli spaces of slope stable sheaves, one is reduced to the study of the functor $\fP_{\bf 0}$. We thank the referee for this interesting remark.
 \end{remark}
 \begin{remark}
 The functors $\fP_\bfi$ might not preserve the compatibility conditions for local freeness introduced by Klyachko \cite{Kl}. We will give an explicit example of such phenomenon in Section \ref{sec:wpp quotients} (Example \ref{ex:p1p1p1top2}).
Note however that in the projective bundles examples (Section \ref{sec:proj bundles}), $\fP_0$ is just a pullback and thus preserves local freeness. 
 \end{remark}
We give a geometric interpretation of the images of the pull-back functors. First, we interpret the condition on unstable divisors with the following lemma, whose proof is straightforward from the definitions.
\begin{lemma}
 \label{lem:vanishing of Delta condition,geometric}
 Let $\mathcal{E}$ be an equivariant reflexive sheaf on $X$. Assume that $\cE=\fK(\E)$, with $\E=\lbrace (E^F(i))\subset E: F\prec P, i\in \Z \rbrace$. Let $F$ be a facet of $P$ corresponding to an irreducible invariant divisor $D_F$ and let $i_F\in \Z$. Then the following are equivalent:
 \begin{itemize}
  \item[i)] For all $i\in\Z$,
\begin{equation}
\label{eq:simple extension condition}
E^F(i)=
\left\{ 
\begin{array}{ccc}
0 & \mathrm{ if } & i < i_F\\
E & \mathrm{ if } & i \geq i_F
\end{array}
\right.
.
\end{equation}
  \item[ii)] For all $m\in M$, a non-zero element of $\Gamma(U_{\sigma_F}\setminus O(F) ,\cE)_m$ extends across the divisor $D_F$ to a section of $\Gamma(U_{\sigma_F},\cE)$ if and only if $\langle m, u_F\rangle \geq i_F$.
 \end{itemize}
\end{lemma}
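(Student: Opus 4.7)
The plan is to unpack both conditions via the Klyachko formula \eqref{eq:sheaf from family of filtrations} and observe that they translate into the same statement about the subspaces $E^F(i)\subset E$.

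First I would identify the relevant open subset. The facet $F\prec P$ corresponds to a ray $\sigma_F\in\Sigma(1)$ whose only proper face is $\{0\}$, so the chart $U_{\sigma_F}$ contains exactly two $T$-orbits: the open torus $T=U_{\sigma_P}$ and the codimension-one orbit $O(F)$, the latter being $U_{\sigma_F}\cap D_F$. Thus $U_{\sigma_F}\setminus O(F)=T$. From \eqref{eq:sheaf from family of filtrations} the weight-$m$ components of the sections over these opens are
\begin{equation*}
\Gamma(U_{\sigma_F},\cE)_m=E^F(\langle m,u_F\rangle)\otimes\chi^m,\qquad \Gamma(U_{\sigma_F}\setminus O(F),\cE)_m=E\otimes\chi^m,
\end{equation*}
and the restriction map is simply the inclusion $E^F(\langle m,u_F\rangle)\hookrightarrow E$. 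Therefore an element $e\otimes\chi^m$ of weight $m$ on $T$ extends across $D_F$ if and only if $e\in E^F(\langle m,u_F\rangle)$.

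Next I would translate condition (ii), read as the element-wise biconditional uniform in $e\in E$: the assertion that every such weight-$m$ element extends exactly when $\langle m,u_F\rangle\geq i_F$ (and otherwise no nonzero element does) is equivalent to
\begin{equation*}
E^F(\langle m,u_F\rangle)=E\ \text{if}\ \langle m,u_F\rangle\geq i_F,\qquad E^F(\langle m,u_F\rangle)=0\ \text{if}\ \langle m,u_F\rangle<i_F.
\end{equation*}
Since $u_F\in N$ is primitive, the pairing $\langle\cdot,u_F\rangle\colon M\to\Z$ is surjective, so as $m$ varies over $M$, every integer $i$ is attained. The displayed conditions then read $E^F(i)=E$ for $i\geq i_F$ and $E^F(i)=0$ for $i<i_F$, which is exactly (i). The converse implication (i)$\Rightarrow$(ii) is immediate from the same translation.

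The only real point to check is the interpretation of the biconditional in (ii); once phrased as "the subspace of weight-$m$ sections extending across $D_F$ is either all of $E$ or $0$, according to whether $\langle m,u_F\rangle\geq i_F$", both implications are a direct unpacking of \eqref{eq:sheaf from family of filtrations}, with no obstacle beyond bookkeeping.
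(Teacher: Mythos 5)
Your proof is correct and is exactly the "straightforward from the definitions" argument the paper leaves to the reader: identify $U_{\sigma_F}\setminus O(F)$ with the open torus, read off the weight-$m$ section spaces from \eqref{eq:sheaf from family of filtrations} (noting the only facet containing $F$ is $F$ itself, so the intersection collapses to $E^F(\langle m,u_F\rangle)$), and use primitivity of $u_F$ to see every integer value of $\langle m,u_F\rangle$ is realized. Your explicit handling of the quantifier in (ii) is a welcome clarification rather than a deviation.
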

\begin{remark}
 Condition $(ii)$ in Lemma \ref{lem:vanishing of Delta condition,geometric} is equivalent to the vanishing of the quantities $(\Delta_j(k))$ introduced in \cite[Proposition 3.20]{Koo11}, for the index $j$ corresponding to $F$.
\end{remark}
We deduce:
\begin{corollary}
 \label{cor:image of the functors}
 Let $\mathcal{E}$ be an equivariant reflexive sheaf on $X$, and let $\bfi\in \Z^u$. Then the following are equivalent:
 \begin{itemize}
  \item[i)] The sheaf $\cE$ lies in the image of $\fP_\bfi$.
  \item[ii)] The sheaf $\cE$ descends to $Y$ and satisfies the extension condition $(\ref{eq:simple extension condition})$ for all unstable divisor $D_F$, with $i_F=i_{D_F}$.
 \end{itemize}
\end{corollary}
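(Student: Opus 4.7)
The proof will be a direct unpacking of the definitions, using Lemmas \ref{lem:pushf functor}, \ref{lem:pullb functors}, Corollary \ref{cor:Nevins reflexive} and Lemma \ref{lem:vanishing of Delta condition,geometric}. Write $\cE=\fK(\E)$ with $\E=\{(E^F(i))\subset E,\, F\prec P,\, i\in\Z\}$.

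For the direction (i)$\Rightarrow$(ii), suppose $\cE=\fP_\bfi(\check\E)$ for some $\check\E=\{(\check E^{\check F}(j))\subset \check E,\, \check F\prec P_Y,\, j\in\Z\}$. By the very definition of $\fP_\bfi$, for every stable facet $F\prec P^s$ the filtration $(E^F(i))$ depends only on $\lfloor i/b_F\rfloor$, so its dimension jumps $e^F(i)$ vanish unless $b_F\mid i$. Corollary \ref{cor:Nevins reflexive} then gives that $\cE$ descends to $Y$. For each unstable facet $F\prec P^{us}$, the filtration on $F$ is exactly the two-step filtration jumping at $i=i_{D_F}$, which is the condition in Lemma \ref{lem:vanishing of Delta condition,geometric}(i) with $i_F=i_{D_F}$, so the extension condition (ii) of that lemma holds.

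For the direction (ii)$\Rightarrow$(i), define the candidate preimage by $\check\cE:=\pi_\ast^G\iota^\ast\cE\in\Ref^T(Y)$, with associated family of filtrations $\check\E$ computed via Lemmas \ref{lem:restriction functor} and \ref{lem:pushf functor} as $\check E = E$ and $\check E^{U\cap F}(j) = E^F(b_F j)$ for $F\prec P^s$. I then compare $\fP_\bfi(\check\E)$ with $\E$ facet by facet. On an unstable facet $F\prec P^{us}$, the hypothesis says $(E^F(i))$ is the two-step filtration jumping at $i_{D_F}$, which is exactly $\fP_\bfi(\check\E)^F$ by definition. On a stable facet $F\prec P^s$, the descent hypothesis combined with Corollary \ref{cor:Nevins reflexive} yields $E^F(i)=E^F(b_F\lfloor i/b_F\rfloor)=\check E^{U\cap F}(\lfloor i/b_F\rfloor)$, which again agrees with $\fP_\bfi(\check\E)^F$. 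Hence $\fP_\bfi(\check\cE)\cong \cE$ via the identity on $E=\check E$.

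The only real content is the bookkeeping on stable facets: one has to unwind that ``descent'' (an equality between two filtrations on $X^s$ obtained by $\pi^\ast\pi_\ast^G\iota^\ast$) translates combinatorially into the divisibility statement $E^F(i)=E^F(b_F\lfloor i/b_F\rfloor)$, which is precisely what allows one to recover the filtration on $X$ from the one on $Y$ via the formula $E^F(i)=\check E^{U\cap F}(\lfloor i/b_F\rfloor)$ built into $\fP_\bfi$. I expect no genuine obstacle here beyond checking that the morphisms (not just the objects) are respected, which follows because a morphism in $\Ref^T(Y)$ is just a linear map $\check E_1\to \check E_2$ preserving the $\check E^{\check F}(j)$, and the same linear map manifestly preserves the extended filtrations $E^F(i)$ on both stable and unstable facets.
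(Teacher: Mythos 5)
Your proof is correct and follows the same route as the paper, which simply cites the definition of $\fP_\bfi$ together with Lemmas \ref{lem:pullb functors} and \ref{lem:vanishing of Delta condition,geometric}; your facet-by-facet unpacking (including producing the preimage as $\pi_*^G\iota^*\cE$ and invoking Corollary \ref{cor:Nevins reflexive} for the stable facets) is just the explicit version of that one-line argument.
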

\begin{proof}
 It follows from the definition of the functors and Lemmas \ref{lem:pullb functors} and \ref{lem:vanishing of Delta condition,geometric}
\end{proof}
It will become clear in the next section that the extension condition (\ref{eq:simple extension condition}) for unstable divisors is precisely the condition on $\fP_\bfi$ that enables us to compare the slopes of $\check \cE\in \Ref^T(Y)$ and of $\fP_\bfi(\check \cE)$ for all $\check \cE\in \Ref^T(Y)$.
We finish this section with a more effective lemma that will be used in these comparisons:
\begin{lemma}
 \label{lem:comparison jumps functors}
 Let $\check\cE\in\Ref^T(Y)$ and $\cE=\fP_\bfi(\check\cE)\in\Ref^T(X)$. Let $(e^F(i))$ be the dimension jumps of $\cE$ and let $(\check e^F(i))$ be the dimension jumps of $\check \cE$. Then
 \begin{enumerate}
  \item[i)] For all $F\prec P^s$, $e^F(i)=0$ if $i\neq 0 \mod b_F$ and $e^F(b_F i)=\check e^F(i)$.
  \item[ii)] For all $F\prec P^{us}$, $e^F(i)=0$ if $i\neq i_{D_F}$ and $e^F(i_{D_F})=\rk(\cE)$.
 \end{enumerate}
\end{lemma}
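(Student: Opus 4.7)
The proof proposal is essentially a routine unwinding of the definition of $\fP_\bfi$ from Section \ref{sec:pullback functors}, combined with the definition of dimension jumps $e^F(i) = \dim(E^F(i-1)) - \dim(E^F(i))$. Since both filtrations $\E$ (for $\cE$) and $\check \E$ (for $\check \cE$) live on the same underlying vector space (the functor $\fP_\bfi$ sets $E = \check E$), the computations reduce to tracking how the filtration indices transform under the two cases in the definition of $\fP_\bfi$.

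For part (i), fix $F \prec P^s$ and set $\check F = F \cap U$. By definition of $\fP_\bfi$, one has $E^F(i) = \check E^{\check F}(\lfloor i/b_F \rfloor)$. The key observation is that the function $i \mapsto \lfloor i/b_F \rfloor$ is constant on each arithmetic block $\{b_F j, b_F j + 1, \ldots, b_F(j+1)-1\}$. Hence if $i \not\equiv 0 \pmod{b_F}$, then $i$ and $i-1$ lie in the same block, so $E^F(i-1) = E^F(i)$ and therefore $e^F(i) = 0$. On the other hand, if $i = b_F j$, then $\lfloor (b_F j - 1)/b_F \rfloor = j - 1$ while $\lfloor b_F j / b_F \rfloor = j$, yielding
\[
e^F(b_F j) = \dim \check E^{\check F}(j-1) - \dim \check E^{\check F}(j) = \check e^F(j).
\]

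For part (ii), fix $F \prec P^{us}$. Here $\fP_\bfi$ imposes the simple filtration $E^F(i) = 0$ for $i < i_{D_F}$ and $E^F(i) = E$ for $i \geq i_{D_F}$. Consequently $\dim E^F(i)$ is constant (equal to $0$ or to $\dim E$) on the two half-lines separated by $i_{D_F}$, so $e^F(i) = 0$ for $i \neq i_{D_F}$, while
\[
e^F(i_{D_F}) = \dim E^F(i_{D_F} - 1) - \dim E^F(i_{D_F}) = 0 - \dim E = -\mathrm{rank}(\cE),
\]
using that $\mathrm{rank}(\cE) = \dim E = \dim \check E = \mathrm{rank}(\check \cE)$ since $\fP_\bfi$ preserves the underlying vector space.

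There is no real obstacle here: the statement is a direct bookkeeping exercise, and the only subtlety is correctly handling the floor function in case (i), which is transparent once one observes that the floor is locally constant in blocks of length $b_F$. No geometric input beyond the definitions of $\fP_\bfi$ and of the dimension jumps is required.
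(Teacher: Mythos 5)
Your proof is correct and takes the same route as the paper, which simply states that the lemma ``follows directly from the definition of the functors''; you have just made the bookkeeping with the floor function and the two-step filtration explicit. No issues.
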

\begin{proof}
 It follows directly from the definition of the functors.
\end{proof}

 
\section{Slopes under descent and the Minkowski condition}
\label{sec:slopes and minkowski}
In this section, $(X,L)$ is a polarized toric variety with torus $T=N\otimes_\Z \C^*$, and $G=N_0\otimes_\Z \C^*$ is a subtorus. We assume $N_0=N\cap (N_0\otimes\R)$ and consider a linearization $\gamma$ such that $(G,\gamma)$ is generic (as in Definition \ref{def:generic Cstar}). We will compare the slope stability notions on $X$ and $Y=X/\!\!/G$.
We will use the same notation as in  previous sections.
 \subsection{Some stability notions} 
 \label{sec:stability notions}
We recall now the notions of stabilities that we will consider, and state some propositions specific to the toric setting. We refer to \cite{HuLe,Gue} for the stability notions (see in particular \cite[Proposition 4.3]{Gue} for the definition of slope on normal varieties) and to \cite{La} for the definition of the intersection of a Weil divisor with real Cartier divisors.
\begin{definition} Let $\mathcal{E}$ be a torsion-free coherent sheaf on $X$. The \emph{degree} of $\mathcal{E}$ with respect to an ample class $\alpha\in N^1(X)_\R$ is the real number obtained by intersection:
\begin{eqnarray*}
\deg_\alpha(\mathcal{E})=c_1(\mathcal{E})\cdot \alpha^{n-1}
\end{eqnarray*}
and its \emph{slope} with respect to $\alpha$ is given by
\begin{eqnarray*}
\mu_\alpha(\mathcal{E}) = \frac{\deg_\alpha(\mathcal{E})}{\rk(\mathcal{E})}.
\end{eqnarray*}
A torsion-free coherent sheaf $\mathcal{E} $ is said to be $\mu$-\emph{semi-stable} or \emph{slope semi-stable} with respect to $\alpha$ if for any coherent subsheaf $0\neq \mathcal{F}\subsetneq\mathcal{E}$ with $0<\rk\, \cF<\rk\,\cE$, one has 
\begin{eqnarray}
\label{eq:slopeinequality}
\mu_\alpha(\mathcal{F})\leq \mu_\alpha(\mathcal{E}).
\end{eqnarray}
When strict inequality always holds, we say that $\mathcal{E}$ is $\mu$-\emph{stable}. Finally, $\cE$ is said to be $\mu$-{polystable} if it is the direct sum of $\mu$-stable subsheaves of the same slope.
\end{definition}
It is a standard fact that to test stability of a torsion-free coherent sheaf $\cE$, it is enough to check (\ref{eq:slopeinequality}) for proper saturated coherent subsheaves $\cF$ (see \cite[Proposition 1.2.6]{HuLe}). A less standard fact that follows from \cite[Proposition 4.13]{Koo11} is that in the toric setting, to check $\mu$-stability of an equivariant reflexive sheaf, it is enough to compare slopes for saturated equivariant reflexive subsheaves. More precisely, using the description of saturated equivariant reflexive subsheaves from Proposition \ref{prop:equiv ref subsheaves}, we have:
\begin{proposition}{\rm \cite[Proposition 4.13]{Koo11} and \cite[Proposition 2.3]{HNS}. }
\label{prop:stability equiv subsheaves}
Let $\cE=\fK(\E)$ be a $T$-equivariant reflexive sheaf on a normal toric variety $X$. Then $\cE$ is $\mu$-semi-stable (resp. $\mu$-stable) if and only if for all proper vector subspaces $W\subset E$, $\mu_L(\cE_W)\leq \mu_L(\cE)$ (resp. $\mu_L(\cE_W)< \mu_L(\cE)$), where $\cE_W=\fK(W\cap \E)$.
\end{proposition}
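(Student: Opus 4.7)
The plan is to reduce slope stability testing, which a priori requires checking all proper coherent subsheaves, to the much smaller class of equivariant reflexive subsheaves, and then invoke Proposition \ref{prop:equiv ref subsheaves} to parametrize these by vector subspaces $W \subset E$. One direction is immediate: if $\cE$ is $\mu$-(semi)stable in the usual sense, then the inequality holds in particular for all subsheaves of the form $\cE_W$, since these are honest proper subsheaves of $\cE$ of rank $\dim W < \rk \cE$ whenever $W$ is a proper subspace.

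For the converse, I would carry out two successive reductions. First, I would show that slope (semi)stability can be tested using only \emph{reflexive} subsheaves. Given any torsion-free coherent subsheaf $\cF \subset \cE$, let $\cF^{sat}$ denote its saturation in $\cE$. Then $\rk \cF^{sat} = \rk \cF$ and $c_1(\cF) \leq c_1(\cF^{sat})$ as Weil divisor classes (the quotient $\cF^{sat}/\cF$ is torsion, concentrated in codimension $\geq 1$), so $\mu_L(\cF) \leq \mu_L(\cF^{sat})$. Since $c_1$ is determined by codimension-one behavior, replacing $\cF^{sat}$ by its reflexive hull $(\cF^{sat})^{\vee\vee}$ preserves both rank and degree. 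Hence if any proper subsheaf destabilizes $\cE$, so does some reflexive subsheaf.

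Second, I would argue that it suffices to test on \emph{equivariant} reflexive subsheaves. The key input here is the uniqueness of the maximal destabilizing reflexive subsheaf (or, more generally, of the Harder–Narasimhan filtration): if $\cE$ is not $\mu$-semistable, there is a unique reflexive subsheaf $\cF_{\max} \subset \cE$ of maximal slope and, among such, of maximal rank. Uniqueness, combined with the $T$-equivariant structure of $\cE$, forces $\cF_{\max}$ to be $T$-stable, hence to inherit an equivariant structure. A similar argument in the polystable case, or for the first Jordan--Hölder destabilizer, handles $\mu$-stability. Applying Proposition \ref{prop:equiv ref subsheaves}, we may write $\cF_{\max} = \cE_W$ for some proper $W \subset E$, contradicting the hypothesis.

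The main obstacle I anticipate is the second step: properly justifying that the torus action on $\cE$ descends, via uniqueness, to an action on $\cF_{\max}$ in the reflexive rather than locally free setting, and in sufficient generality to handle the $\mu$-stable (rather than just $\mu$-semistable) case. This is a standard argument but requires care on normal toric varieties; the first reduction is routine once the intersection-theoretic framework of \cite{La} is invoked for real ample classes.
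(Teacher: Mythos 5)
Your overall strategy coincides with the one the paper relies on (the paper does not reprove this statement; it cites \cite[Prop.~4.13]{Koo11} and only remarks that the argument goes through on normal toric varieties). The easy direction, the reduction to saturated subsheaves via $\mu_L(\cF)\leq\mu_L(\cF^{sat})$, and the observation that a saturated subsheaf of a reflexive sheaf on a normal variety is again reflexive (so the reflexive-hull step is actually redundant) are all correct, as is the equivariance of the maximal destabilizing subsheaf via its uniqueness. So the semistability criterion is fully justified by your argument.

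There is, however, a genuine gap in the stability criterion, precisely at the point you flag. To prove that ``$\mu_L(\cE_W)<\mu_L(\cE)$ for all proper $W$'' implies $\mu$-stability, you must handle the case where $\cE$ is $\mu$-semistable but not $\mu$-stable, and produce a \emph{proper equivariant} saturated subsheaf of slope equal to $\mu_L(\cE)$. Your proposed tool, ``the first Jordan--H\"older destabilizer,'' is not a canonically defined object: Jordan--H\"older filtrations (and in particular their first terms) are not unique, so the uniqueness-implies-$T$-invariance mechanism that works for $\cF_{\max}$ simply does not apply to it. The canonical replacement is the socle (the sum of all stable subsheaves of slope $\mu_L(\cE)$), which is unique and hence equivariant; but the socle may be all of $\cE$, namely when $\cE$ is polystable, and then one still needs an argument: if there are at least two isotypic components, each is canonical hence equivariant and proper; if $\cE\cong\cF\otimes W$ for a single stable $\cF$ with $\dim W\geq 2$, one must use that $T$ acts projectively on $W=\Hom(\cF,\cE)$ (since $\cF$ is simple) and that a torus acting on $\P(W)$ has a fixed point, yielding a proper invariant subsheaf $\cF\otimes\C w$ of the same slope. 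An alternative that bypasses all of this is a Borel fixed-point argument on the (projective) Quot scheme parametrizing destabilizing quotients. Without one of these inputs, the stable half of the equivalence is not established.
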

Note that the proof of the above proposition is valid on normal toric varieties. Following \cite{Koo11,HNS}, we will use the following combinatorial formula for the slopes of equivariant reflexive sheaves (see also \cite[Lemma 2.2]{HNS}):
\begin{lemma}
 \label{lem:formula slope toric}
  Let $\cE$ be a $T$-equivariant reflexive sheaf with dimension jumps $(e^F(i))$. Then
 $$
 \mu_L(\cE)=-\frac{1}{\rk(\cE)}\sum_{F\prec P} i_F(\det\cE) \deg_L(D_F),
 $$
 where for all $F\prec P$,
 $$
 i_F(\det\cE)=\sum_{i\in\Z} i e^F(i).
 $$
\end{lemma}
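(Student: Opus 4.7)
The plan is to derive this slope formula directly by combining the definition of the slope with the combinatorial expression for the first Chern class of an equivariant reflexive sheaf already established in Corollary \ref{prop:first Chern class}. Since all the hard combinatorial content has been packaged into that Corollary (and its underlying Proposition \ref{prop:determinant sheaf} on the determinant), the lemma is essentially a formal intersection-theoretic computation.

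First I would recall that by definition
\[
\mu_L(\cE) = \frac{\deg_L(\cE)}{\rk(\cE)} = \frac{c_1(\cE)\cdot L^{n-1}}{\rk(\cE)}.
\]
Then I would invoke Corollary \ref{prop:first Chern class}, which expresses the first Chern class as a $\Z$-linear combination of invariant divisors, namely
\[
c_1(\cE)=\sum_{F\prec P}\bigl(-i_F(\det\cE)\bigr)\, D_F \in A_{n-1}(X),
\]
with $i_F(\det\cE)=-\sum_{i\in\Z} i\, e^F(i)$. Intersecting both sides with $L^{n-1}$, using linearity of the intersection pairing between Weil divisors and (real) Cartier divisors on the normal variety $X$ (as recalled from \cite{La}), yields
\[
c_1(\cE)\cdot L^{n-1}=\sum_{F\prec P}\bigl(-i_F(\det\cE)\bigr)\, D_F \cdot L^{n-1} = -\sum_{F\prec P} i_F(\det\cE)\,\deg_L(D_F),
\]
by the very definition $\deg_L(D_F)=D_F\cdot L^{n-1}$. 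Dividing by $\rk(\cE)$ delivers the desired formula.

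There is essentially no obstacle at this stage: the equivariance, the passage from filtrations to dimension jumps, and the identification of the determinant as the rank one reflexive sheaf associated with $\sum_F i_F(\det\cE)\,D_F$ were all handled in Proposition \ref{prop:determinant sheaf} and Corollary \ref{prop:first Chern class}. The only points deserving a brief comment would be (i) that $\det\cE$ being rank one reflexive means $c_1(\det\cE)$ makes sense as a Weil divisor class on the normal variety $X$ in the sense recalled in the remark preceding Definition \ref{def:det and first chern class}, and (ii) that intersection with the ample class $L^{n-1}$ is well-defined on such Weil divisor classes, as explained in \cite{La}. Once these are noted, the computation above gives the lemma directly.
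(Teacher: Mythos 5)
Your proposal is correct and follows exactly the route the paper takes: the paper's own proof is the one-line remark that the lemma ``follows from the definition of slopes and Proposition \ref{prop:first Chern class}'', and you have simply spelled out the same formal computation (linearity of the intersection pairing applied to the divisor expression for $c_1(\cE)$, then division by the rank). Nothing further is needed.
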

\begin{proof}
 It follows from the definition of slopes and Proposition \ref{prop:first Chern class}.
\end{proof}
We will need to compare the data defining the slopes after descent:
\begin{lemma}
 \label{lem:comparison slopes}
 Let $\bfi=(i_F)_{F\prec P^{us}}\in\Z^u$.
 Let $\check\cE$ be a $T$-equivariant sheaf on $Y$. Then:
 \begin{itemize}
  \item[i)] For all $F\prec P^s$,
 $ i_F(\det\fP_\bfi(\check \cE))=b_F i_{F\cap U}(\det\check\cE).$
  \item[ii)] For all $F\prec P^{us}$, $i_F(\det\fP_\bfi(\check \cE))=i_F\: \rk(\cE)$.
 \end{itemize}
\end{lemma}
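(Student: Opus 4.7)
The plan is to prove both statements by direct computation using the definition $i_F(\det\mathcal{E}) = -\sum_{i\in\Z} i\, e^F(i)$ together with Lemma \ref{lem:comparison jumps functors}, which already carries all the combinatorial information we need about the dimension jumps of $\fP_\bfi(\check\cE)$ in terms of those of $\check\cE$.

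For part $(i)$, let $F \prec P^s$ and set $\check F = F \cap U$. By Lemma \ref{lem:comparison jumps functors}$(i)$, the dimension jumps of $\cE := \fP_\bfi(\check\cE)$ along $F$ vanish outside the subgroup $b_F \Z$, and satisfy $e^F(b_F j) = \check e^{\check F}(j)$ for all $j\in \Z$. I would then reindex the sum defining $i_F(\det\cE)$ by setting $i = b_F j$:
\begin{equation*}
i_F(\det\cE) \;=\; -\sum_{i\in\Z} i\, e^F(i) \;=\; -\sum_{j\in \Z} (b_F j)\, e^F(b_F j) \;=\; -b_F \sum_{j\in \Z} j\, \check e^{\check F}(j) \;=\; b_F\, i_{\check F}(\det\check\cE),
\end{equation*}
which is the desired equality.

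For part $(ii)$, let $F \prec P^{us}$. By Lemma \ref{lem:comparison jumps functors}$(ii)$, the only non-vanishing dimension jump along $F$ occurs at $i = i_{D_F}$ and equals $-\mathrm{rank}(\cE)$. Substituting into the definition gives
\begin{equation*}
i_F(\det\cE) \;=\; -\sum_{i\in \Z} i\, e^F(i) \;=\; -i_{D_F}\cdot (-\mathrm{rank}(\cE)) \;=\; i_{D_F}\, \mathrm{rank}(\cE),
\end{equation*}
which is the claim (with $i_F$ understood as $i_{D_F}$ in the indexing of $\bfi$). Note also that $\mathrm{rank}(\cE) = \mathrm{rank}(\check\cE)$ from the construction of $\fP_\bfi$ (the underlying vector space $E$ equals $\check E$).

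There is no genuine obstacle here: both identities are formal consequences of Lemma \ref{lem:comparison jumps functors} and the defining formula for $i_F(\det\cE)$. The only minor care needed is to notice that the sum defining $i_F(\det\cE)$ is effectively finite (since the filtrations eventually stabilize), which justifies the reindexing in $(i)$ and the single-term evaluation in $(ii)$.
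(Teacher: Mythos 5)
Your proof is correct and follows exactly the route the paper intends: the paper's own proof is the one-line remark that the lemma ``follows from the definition of $i_F(\det\cE)$ and Lemma \ref{lem:comparison jumps functors},'' and your reindexing computation for $(i)$ and single-term evaluation for $(ii)$ are precisely the details being elided there.
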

\begin{proof}
 The proof follows from the definition of $i_F(\det\cE)$ and Lemma \ref{lem:comparison jumps functors}.
\end{proof}
We will also need the degree for the pullbacks of the irreducible $T$-invariant divisors.
\begin{lemma}
 \label{lem:comparison line bundes}
 Let $F_1\prec P^s$. Then for any $\bfi\in\Z^u$,
 $$
 \mathfrak{P}_\bfi(\mathcal{O}_Y(D_{F_1\cap U}))=\mathcal{O}_X(b_{F_1}D_{F_1}-\sum_{F\prec P^{us}}i_FD_F).
 $$
\end{lemma}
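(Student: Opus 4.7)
The plan is to compute $\fP_\bfi(\mathcal{O}_Y(D_{F_1\cap U}))$ by hand at the level of families of filtrations, using Proposition \ref{prop:rank one} both to describe the input and to identify the output. Since Proposition \ref{prop:rank one} is phrased for sheaves of the form $\mathcal{O}(-D)$, I would start by writing $\mathcal{O}_Y(D_{F_1\cap U})=\mathcal{O}_Y(-D')$ with $D'=-D_{F_1\cap U}$, so that the defining coefficients on $Y$ are $a_{F_1\cap U}=-1$ and $a_{F\cap U}=0$ for every other stable facet $F\neq F_1$ of $P^s$. Hence the family of filtrations $\check{\E}$ of $\mathcal{O}_Y(D_{F_1\cap U})$ satisfies $\check E^{F_1\cap U}(i)=\C$ iff $i\geq -1$ and $\check E^{F\cap U}(i)=\C$ iff $i\geq 0$ for $F\prec P^s$, $F\neq F_1$.

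Next, I apply the definition of $\fP_\bfi$ from Section \ref{sec:pullback functors} to $\check \E$, computing the resulting one-dimensional filtrations $E^F(i)$ for each facet $F\prec P$. On stable facets I need the jump index $i_F$ such that $E^F(i)=\C$ iff $i\geq i_F$. For $F\prec P^s$ with $F\neq F_1$, the condition $\lfloor i/b_F\rfloor\geq 0$ is equivalent to $i\geq 0$, hence $i_F=0$. For $F=F_1$, the key observation is that for $b_{F_1}\in\N^*$, $\lfloor i/b_{F_1}\rfloor\geq -1$ is equivalent to $i\geq -b_{F_1}$, so $i_{F_1}=-b_{F_1}$. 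On unstable facets $F\prec P^{us}$, the definition of $\fP_\bfi$ directly gives $i_F=i_{D_F}$.

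Finally, applying Proposition \ref{prop:rank one} in reverse identifies $\fP_\bfi(\mathcal{O}_Y(D_{F_1\cap U}))$ with $\mathcal{O}_X(-D'')$, where
\[
D''=-b_{F_1}D_{F_1}+\sum_{F\prec P^{us}}i_{D_F}D_F,
\]
which rearranges into exactly $\mathcal{O}_X(b_{F_1}D_{F_1}-\sum_{F\prec P^{us}}i_FD_F)$, as desired.

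I do not expect any genuine obstacle here: the argument is a direct unwinding of the constructions, and the only place where one must be slightly careful is the floor computation $\lfloor i/b_{F_1}\rfloor\geq -1\iff i\geq -b_{F_1}$, together with the sign bookkeeping caused by the fact that Proposition \ref{prop:rank one} encodes $\mathcal{O}(-D)$ rather than $\mathcal{O}(D)$. Once these two points are handled, the result drops out.
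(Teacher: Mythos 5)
Your proposal is correct and follows essentially the same route as the paper: both compute the family of filtrations of $\fP_\bfi(\mathcal{O}_Y(D_{F_1\cap U}))$ facet by facet (with the jump at $-b_{F_1}$ on $F_1$, at $0$ on the other stable facets, and at $i_F$ on the unstable ones) and then identify the result via Proposition \ref{prop:rank one}. The floor computation $\lfloor i/b_{F_1}\rfloor\geq -1\iff i\geq -b_{F_1}$ and the sign bookkeeping are handled correctly.
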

\begin{proof}
 The sheaf $\mathcal{O}_Y(D_{F_1\cap U})$ is equivariant and reflexive. Combining Proposition \ref{prop:rank one} and the definition of the functor $\mathfrak{P}_\bfi$, $\mathfrak{P}_\bfi(\mathcal{O}_Y(D_{F_1\cap U}))$ is determined by the family of filtrations    
 \begin{eqnarray*} 
 L^{F_1}(i)&=& 
 \begin{cases}
 0 & i<-b_{F_1},\\
 \mathbb{C} & i\geq -b_{F_1},
 \end{cases}
  \\ 
L^F(i)&=& 
 \begin{cases}
 \begin{array}{cc}
 0 & i<0,\\
 \mathbb{C} & i\geq 0,
 \end{array}
 \text{ if } F\neq F_1 \text{ is stable},\\
   \begin{array}{cc}
 0 & i<i_F,\\
 \mathbb{C} & i\geq i_F,
 \end{array}
 \text{ if } F \text{ is unstable}.  
\end{cases} 
  \end{eqnarray*}   
 That is to say, $\mathfrak{P}_\bfi(\mathcal{O}_Y(D_{F_1\cap U}))=\mathcal{O}_X(b_{F_1}D_{F_1}-\sum_{F\prec P^{us}}i_FD_F)$.
 \end{proof}
  \begin{corollary}
   \label{cor:comparison degree divisors}
   Let $F_1\prec P^s$. Then for any $\bfi\in\Z^u$,
 $$
\deg_L( \mathfrak{P}_\bfi(\mathcal{O}_Y(D_{F_1\cap U})))=b_{F_1} \deg_L(D_{F_1})-\sum_{F\prec P^{us}}i_F \deg_L(D_F).
 $$
  \end{corollary}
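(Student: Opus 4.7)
This corollary is an immediate consequence of Lemma \ref{lem:comparison line bundes}, so the plan is simply to extract numerical information from that structural identification. Concretely, I would invoke Lemma \ref{lem:comparison line bundes} to identify
$$
\mathfrak{P}_\bfi(\mathcal{O}_Y(D_{F_1 \cap U})) \;\simeq\; \mathcal{O}_X\!\left(b_{F_1} D_{F_1} - \sum_{F \prec P^{us}} i_F D_F\right)
$$
as an equivariant rank-one reflexive sheaf, and then apply the degree functional $\deg_L(\cdot) = c_1(\cdot) \cdot L^{n-1}$ to both sides.

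The conclusion then follows from two standard facts: first, for a rank-one reflexive sheaf $\mathcal{O}_X(D)$ associated to a Weil divisor $D$, the first Chern class is represented by the class of $D$ in $A_{n-1}(X)$ (as recalled in the remark preceding Definition \ref{def:det and first chern class}); second, intersection with $L^{n-1}$ is $\mathbb{Z}$-linear on $A_{n-1}(X)$. Applying these to $D = b_{F_1} D_{F_1} - \sum_{F \prec P^{us}} i_F D_F$ gives the desired formula
$$
\deg_L\!\bigl(\mathfrak{P}_\bfi(\mathcal{O}_Y(D_{F_1 \cap U}))\bigr) = b_{F_1} \deg_L(D_{F_1}) - \sum_{F \prec P^{us}} i_F \deg_L(D_F).
$$

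There is no real obstacle here, since all the combinatorial work was already absorbed into Lemma \ref{lem:comparison line bundes} via the explicit description of $\mathfrak{P}_\bfi$ on families of filtrations of rank one (Proposition \ref{prop:rank one}); the present statement is just the numerical shadow of that identification under $\deg_L$. The only bookkeeping point worth flagging is that $L$ is a genuine ample Cartier class on $X$, so the intersection $c_1(\mathcal{O}_X(D)) \cdot L^{n-1}$ is well-defined for arbitrary Weil divisors $D$ on the normal variety $X$ and is linear in $D$, justifying the splitting of the sum.
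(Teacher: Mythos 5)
Your proposal is correct and matches the paper's (implicit) argument: the corollary is stated without separate proof precisely because it is the image of Lemma \ref{lem:comparison line bundes} under $\deg_L$, using linearity of $c_1(\mathcal{O}_X(D))\cdot L^{n-1}$ in the Weil divisor $D$. Nothing further is needed.
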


\subsection{Comparison of slope stability via the Minkowski condition}
\label{sec:Minkowski condition} 
In this section, we prove Theorem \ref{theo:Minkowski existence} and Proposition \ref{prop:mink converse}, from which Theorem \ref{theo:intro} follows. Recall from Sections \ref{sec:toricpolarized} and \ref{sec:toricGIT} that the data of $(X,L)$ together with the linearization $\gamma : T \to \Aut(L)$ determines a polytope $P \subset M_\R$, and that facets $F$ in the stable locus $P^s$ of $P$ correspond to stable irreducible $T$-invariant  divisors $D_F$.


\begin{theorem}\label{theo:Minkowski existence}
Let $(X,L)$ be a polarized toric variety with torus $T=N\otimes_\Z\C^*$. Let $G=N_0\otimes_\Z\C^*$ be a subtorus for a saturated sublattice $N_0\subset N$. Let $\gamma : T\to \Aut(L)$ be a generic linearization of $G$, and denote by $Y$ the associated GIT quotient $X/\!\!/G$. 
Assume that Minkowski condition holds:
\begin{eqnarray}
\label{eq:Minkowski condition}
\sum_{F\prec P^s} \deg_L(D_F)\: u_F=0  \mod N_0\otimes_\mathbb{Z}\mathbb{R}.
\end{eqnarray}
Then there exists a unique ample class $\alpha\in N^1(Y)_\R$ such that for every equivariant reflexive sheaf $\check{\mathcal{E}}$ on $Y$, for any $\bfi=(i_F)_{F\prec P^{us}}\in\Z^u$, setting $\mathcal{E}=\mathfrak{P}_\bfi(\check{\mathcal{E}})$, we have   
\begin{eqnarray}
\label{eq:equality slopes theo}
\mu_L(\mathcal{E})=\mu_{\alpha}(\check{\mathcal{E}})-\sum_{F\prec P^{us}}i_F\mathrm{deg}_L(D_F).
\end{eqnarray}
In this case, $\mathcal{E}$ is stable on $(X,L)$ if and only if $\check{\mathcal{E}}$ is stable on $(Y,\alpha)$.
\end{theorem}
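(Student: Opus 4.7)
The plan is to derive the slope identity \eqref{eq:equality slopes theo} by expanding both sides via the combinatorial formula of Lemma \ref{lem:formula slope toric}, and then to invoke Minkowski's classical existence theorem for polytopes with prescribed facet data in order to produce the ample class $\alpha$.

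First I would apply Lemma \ref{lem:formula slope toric} to $\mathcal{E}=\mathfrak{P}_\bfi(\check{\mathcal{E}})$, split the sum over facets of $P$ into its stable and unstable parts, and use Lemma \ref{lem:comparison slopes}: on stable facets $i_F(\det\mathcal{E})=b_F\, i_{F\cap U}(\det\check{\mathcal{E}})$, while on unstable facets $i_F(\det\mathcal{E})=i_F\,\mathrm{rank}(\mathcal{E})$. Setting $r=\mathrm{rank}(\mathcal{E})=\mathrm{rank}(\check{\mathcal{E}})$, this gives
\[
\mu_L(\mathcal{E})= -\frac{1}{r}\sum_{F\prec P^s}b_F\, i_{F\cap U}(\det\check{\mathcal{E}})\,\deg_L(D_F) \;-\;\sum_{F\prec P^{us}}i_F\,\deg_L(D_F),
\]
the second summand being precisely the correction term appearing in \eqref{eq:equality slopes theo}. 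Applying Lemma \ref{lem:formula slope toric} on $(Y,\alpha)$ and using the bijection $F\leftrightarrow F\cap U$ of Lemma \ref{lem:genericity lemma}, the first summand equals $\mu_\alpha(\check{\mathcal{E}})$ for every $\check{\mathcal{E}}$ if and only if the degrees on $Y$ satisfy $\deg_\alpha(D_{F\cap U})=b_F\,\deg_L(D_F)$ for every $F\prec P^s$.

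The central input is then Minkowski's existence and uniqueness theorem: given vectors $(\check u_F)_{F\prec P^s}$ spanning $N_Y\otimes\R$ and positive reals $c_F$, there exists a convex polytope in $U=N_0^\perp\otimes\R$ with those outer facet normals and with facet $(n-g-1)$-volumes proportional to $c_F$, unique up to translation, provided $\sum_F c_F\check u_F=0$. Choosing $c_F=b_F\deg_L(D_F)$ and using Lemma \ref{lem:projection primitive defining facet} to rewrite $b_F\check u_F=\pi(u_F)$, where $\pi\colon N\to N_Y$ denotes the lattice projection, the required vanishing becomes
\[
\pi\Big(\sum_{F\prec P^s}\deg_L(D_F)\,u_F\Big)=0\quad\text{in } N_Y\otimes\R,
\]
which is exactly the Minkowski condition \eqref{eq:Minkowski condition}. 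The polytope $P_\alpha$ so produced has normal fan that of $Y$, and by Proposition \ref{prop:real ample class correspondence} determines a unique real ample class $\alpha\in N^1(Y)_\R$ with the prescribed divisor degrees; translations of $P_\alpha$ correspond to different representatives of the same class, which accounts for the uniqueness of $\alpha$.

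Finally, the equivalence of stability notions is essentially formal. By Proposition \ref{prop:equiv ref subsheaves}, every equivariant reflexive subsheaf of $\mathcal{E}=\mathfrak{P}_\bfi(\check{\mathcal{E}})$ is of the form $\mathcal{E}_W$ for some $W\subset E=\check E$, and a direct unwinding of the definition of $\mathfrak{P}_\bfi$ shows that $\mathcal{E}_W=\mathfrak{P}_\bfi(\check{\mathcal{E}}_W)$. Applying \eqref{eq:equality slopes theo} to both $\mathcal{E}$ and $\mathcal{E}_W$ and subtracting, the $\bfi$-dependent correction cancels, so the comparison of $\mu_L(\mathcal{E}_W)$ with $\mu_L(\mathcal{E})$ matches that of $\mu_\alpha(\check{\mathcal{E}}_W)$ with $\mu_\alpha(\check{\mathcal{E}})$. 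Proposition \ref{prop:stability equiv subsheaves} then gives that $\mathcal{E}$ is $\mu_L$-stable if and only if $\check{\mathcal{E}}$ is $\mu_\alpha$-stable, and analogously for semi- and polystability. I expect the main delicate point to lie in the second step: one has to align carefully the lattice-volume normalization appearing in Minkowski's theorem with the intersection-theoretic meaning of $\deg_L(D_F)$, so that the projected vanishing condition is literally \eqref{eq:Minkowski condition} with no spurious multiplicative constant obstructing the identification.
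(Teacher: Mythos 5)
Your proposal is correct and follows essentially the same route as the paper: expand both slopes via Lemma \ref{lem:formula slope toric}, compare facet data through Lemmas \ref{lem:comparison slopes} and \ref{lem:genericity lemma}, reduce the existence and uniqueness of $\alpha$ to the lattice form of Minkowski's theorem via $\deg_\alpha(D_{\check F})=\mathrm{latvol}(\check F)$ (Proposition \ref{prop:lattice volume is degree}, which settles the normalization issue you flag with no spurious constant), and reduce stability to equivariant subsheaves $\cE_W=\fP_\bfi(\check\cE_W)$. You also correctly read the correction term as a sum over unstable facets, consistent with the paper's own proof.
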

A converse of this statement is given in the following. 
\begin{proposition}\label{prop:mink converse}
There exists an ample class $\alpha\in N^1(Y)_\mathbb{R}$ with respect to which the functors $\mathfrak{P}_\bfi$ from equivariant reflexive sheaves on $Y$ to equivariant reflexive sheaves on $X$ preserve each of the conditions of $\mu$-stability, $\mu$-semi-stability and $\mu$-polystability if and only if 
\begin{eqnarray*}  
\sum_{F \prec P^s}\deg_P(D_F)u_F=0 \mod\ N_0\otimes_\mathbb{Z}\mathbb{R}. 
\end{eqnarray*}
\end{proposition}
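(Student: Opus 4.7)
The plan is to test preservation of $\mu$-polystability on direct sums of equivariant line bundles on $Y$, extract from this a proportionality between $\deg_\alpha(D_{F\cap U})$ and $b_F \deg_L(D_F)$ for all $F\prec P^s$, and conclude the Minkowski condition from the closure identity for the polytope $P_\alpha$. The stability and semistability versions are handled identically, so assume an ample class $\alpha\in N^1(Y)_\R$ exists for which the $\fP_\bfi$ preserve $\mu$-polystability, and fix any $\bfi\in\Z^u$.

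A direct extension of Proposition \ref{prop:rank one} and Lemma \ref{lem:comparison line bundes} to arbitrary $T$-invariant divisors $D=\sum_{F\prec P^s} a_F D_{F\cap U}$ on $Y$ (the facets of $P_Y$ being in bijection with stable facets $F\prec P^s$, by Lemma \ref{lem:genericity lemma}) gives
$$\fP_\bfi(\cO_Y(D))\cong \cO_X\Bigl(\sum_{F\prec P^s} a_F b_F D_F-\sum_{F\prec P^{us}} i_F D_F\Bigr),$$
obtained by writing out the family of filtrations and checking that the jumps lie at $-a_{F\cap U} b_F$ on each stable facet $F$ and at $i_F$ on each unstable facet. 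Hence
$$\deg_L(\fP_\bfi(\cO_Y(D)))=\sum_{F\prec P^s} a_F b_F\deg_L(D_F)-C_\bfi,$$
with $C_\bfi=\sum_{F\prec P^{us}} i_F\deg_L(D_F)$, while $\deg_\alpha(\cO_Y(D))=\sum_{F\prec P^s} a_F \deg_\alpha(D_{F\cap U})$.

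Given two equivariant line bundles $\cL_j=\cO_Y(D_j)$ with $D_j=\sum a_F^{(j)} D_{F\cap U}$, the direct sum $\cL_1\oplus\cL_2$ is $\mu$-polystable on $(Y,\alpha)$ iff $\deg_\alpha(\cL_1)=\deg_\alpha(\cL_2)$; since $\fP_\bfi$ is defined on families of filtrations it commutes with direct sums, so $\fP_\bfi(\cL_1)\oplus\fP_\bfi(\cL_2)$ is $\mu$-polystable on $(X,L)$ iff $\deg_L(\fP_\bfi(\cL_1))=\deg_L(\fP_\bfi(\cL_2))$. Every $(t_F)\in\Z^{P^s}$ arises as $(a_F^{(1)}-a_F^{(2)})$ for a suitable pair $(D_1,D_2)$ (split into positive and negative parts), so the hypothesis yields
$$\sum_{F\prec P^s} t_F \deg_\alpha(D_{F\cap U})=0\iff \sum_{F\prec P^s} t_F b_F\deg_L(D_F)=0$$
for every $(t_F)\in\Z^{P^s}$, hence on $\R^{P^s}$. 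The two $\R$-linear functionals are nonzero (both are strictly positive on the positive orthant, by ampleness of $\alpha$ and $L$), so the equality of their kernels forces proportionality: there exists $\lambda>0$ with $\deg_\alpha(D_{F\cap U})=\lambda\, b_F\deg_L(D_F)$ for every $F\prec P^s$.

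To conclude, we invoke the standard closure identity for the bounded polytope $P_\alpha\subset N_0^\perp\otimes_\Z\R$, namely $\sum_{F\prec P^s}\deg_\alpha(D_{F\cap U})\,\check u_F=0$ (the necessary direction of the Minkowski existence theorem to be stated in Section \ref{sec:Minkowski condition}). Substituting the proportionality obtained above and replacing $\check u_F$ by $b_F^{-1}\pi(u_F)$ via Lemma \ref{lem:projection primitive defining facet}, this becomes $\lambda\,\pi\bigl(\sum_{F\prec P^s}\deg_L(D_F)\,u_F\bigr)=0$, i.e.\ $\sum_{F\prec P^s}\deg_L(D_F)\,u_F\in N_0\otimes_\Z\R$, which is the Minkowski condition. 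The delicate point, and the one that needs most care, is the extension of Lemma \ref{lem:comparison line bundes} to arbitrary equivariant divisors on $Y$; everything beyond that is linear algebra and the closure identity for a polytope.
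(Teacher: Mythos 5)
Your proof is correct and follows essentially the same route as the paper's: both test preservation of $\mu$-polystability on direct sums of equivariant rank-one sheaves, deduce that $\deg_\alpha(D_{F\cap U})$ is proportional to $b_F\deg_L(D_F)$ over all stable facets, and conclude via the closure identity for the polytope of $\alpha$ combined with Lemma \ref{lem:projection primitive defining facet}; the paper merely uses the specifically chosen pairs $\cO_Y(d_1 D_{\check F_1})\oplus\cO_Y(d_2 D_{\check F_2})$ with $d_1=b_{F_2}\deg_L(D_{F_2})$ and $d_2=b_{F_1}\deg_L(D_{F_1})$ in place of your kernel-equality argument for general invariant divisors. The only point to make explicit is that the ``if'' direction of the equivalence is exactly Theorem \ref{theo:Minkowski existence}, which your write-up leaves unstated.
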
 

We delay the proofs slightly in order to first present the classical Minkowski theorem. We recall that a lattice $M $ defines a measure $\nu$ on $M_\mathbb{R}=M\otimes_\mathbb{Z}\mathbb{R}$ as the pull-back of Haar measure on $M_\mathbb{R}/M$. It is determined by the properties 
\begin{enumerate} 
\item[(a)] $\nu$ is translation invariant, 
    \item[(b)] $\nu(M_\mathbb{R}/M)=1$.
\end{enumerate}
Let $\alpha\in N^1(X)_\R$ be an ample class, and denote by $P_\alpha$ the associated polytope.
For any facet $F$ of $P_\alpha$, there is a unique $(n-1)$-form $\nu_F$, independent of $\alpha$, such that $\nu_F\wedge u_F = \nu$. If $P_\alpha$ is a lattice polytope, with
$$
P_\alpha=\lbrace m\in M_\R : \langle m , u_F\rangle\geq - a_F \textrm{ for all } F\prec P \rbrace,
$$
the form $\nu_F$ corresponds to the measure defined by the lattice 
$$
M\cap \lbrace m\in M_\R : \langle m , u_F\rangle= - a_F \rbrace
$$ in the affine span of $F$. We will denote the volume of the facet $F$ with respect to $\nu_F$ by ${\rm latvol}(F)$. 
\begin{proposition}[\cite{Dan}]
 \label{prop:lattice volume is degree}
 For any facet $F$ of $P_\alpha$,
 $$
 \deg_\alpha(D_F)=\mathrm{latvol}(F).
 $$
\end{proposition}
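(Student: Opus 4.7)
The plan is to reduce the statement to the case of an integral ample class via continuity, and then apply Danilov's classical formula expressing top self-intersections of ample Cartier divisors on toric varieties in terms of lattice volumes of their associated polytopes.

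First I would argue that both sides of the equality depend continuously, in fact polynomially, on $\alpha$ in the real ample cone of $N^1(X)_\R$. For the degree this is clear: $\deg_\alpha(D_F)=c_1(\mathcal{O}_X(D_F))\cdot\alpha^{n-1}$ is a homogeneous polynomial of degree $n-1$ in $\alpha$. For the lattice volume, I would fix a representative $\sum_\rho a_\rho D_\rho$ of $\alpha$ and invoke Proposition \ref{prop:real ample class correspondence}: on a small neighbourhood of $\alpha$, the polytope $P_\alpha$ retains the same combinatorial type and its vertices depend rationally on the coefficients $(a_\rho)$, so $\mathrm{latvol}(F_\alpha)=\nu_F(F_\alpha)$ depends polynomially on $\alpha$ as well. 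Since integral ample classes are dense in the real ample cone, it suffices to verify the formula on integral representatives.

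For $\alpha$ represented by a $T$-invariant ample Cartier divisor $D$, the projection formula yields
\begin{equation*}
\deg_\alpha(D_F)\;=\;D^{n-1}\cdot D_F\;=\;(D|_{D_F})^{n-1},
\end{equation*}
computed as an intersection number on the complete toric subvariety $D_F=\overline{O(\rho_F)}$ of dimension $n-1$, whose character lattice is (a translate of) $M(F):=u_F^\perp\cap M$. The ample polytope attached to $D|_{D_F}$ on $D_F$ is precisely (a translate of) the facet $F$, sitting in $M(F)_\R$. Danilov's classical formula applied on $D_F$ then expresses $(D|_{D_F})^{n-1}$ in terms of the lattice volume of $F$ measured against $M(F)$, which is the content of the statement.

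The main bookkeeping point, and the only step requiring real care, is matching the two $(n-1)$-dimensional lattice measures involved: the form $\nu_F$ defined in the statement by $\nu_F\wedge u_F=\nu$, and the lattice measure of $M(F)$ appearing in Danilov's formula. This is handled by a short direct calculation using primitivity of $u_F$: one picks $m_F\in M$ with $\langle m_F,u_F\rangle=1$, completes any basis $(e_1,\dots,e_{n-1})$ of $M(F)$ to a basis $(e_1,\dots,e_{n-1},m_F)$ of $M$, and the defining relation for $\nu_F$ together with $\nu(e_1,\dots,e_{n-1},m_F)=1$ forces $\nu_F(e_1,\dots,e_{n-1})=1$. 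Thus $\nu_F$ is exactly the lattice measure of $M(F)$ on the affine span of $F$, and combining this identification with Danilov's formula gives the desired equality.
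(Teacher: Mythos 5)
Your proof follows essentially the same route as the paper's: verify the identity for integral classes by citing Danilov, then extend to real ample classes using that both sides are homogeneous polynomials of degree $n-1$ in $\alpha$ (the paper says "homogeneous and continuous", which is the same reduction). You additionally spell out the bookkeeping behind the Danilov citation --- the projection formula on $D_F$ and the identification of $\nu_F$ with the lattice measure of $u_F^\perp\cap M$ via primitivity of $u_F$ --- which the paper leaves implicit; this is correct and consistent with the paper's definition of $\nu_F$.
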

\begin{proof}
From \cite[Section 11]{Dan}, the equality holds for $\alpha$ integral.
Since the two expressions in the equality are both homogeneous (of order $n-1$) and continuous, we also have  $\deg_\alpha(D_F)=\mathrm{latvol}(F)$, whenever $\alpha$ is an ample $\mathbb{R}$-divisor.
\end{proof} We have the elementary lemma:
\begin{lemma}
Let $u_F\in N$ be the primitive element associated to a facet $F\prec P$. Let $g=\langle\cdot,\cdot\rangle$ be a positive definite inner product on $M_\mathbb{R}$ whose induced volume form coincides with $\nu$ and let $\tilde{u}_F= u_F/\|u_F\|$ be the normalized vector in $N_\mathbb{R}$. Then, 
\begin{eqnarray*}
{\rm latvol}(F)\,u_F={\rm vol}_g(F)\,\tilde{u}_F.
\end{eqnarray*}
\end{lemma}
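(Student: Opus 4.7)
The plan is to reduce the claimed vector equality to the scalar identity $\mathrm{latvol}(F)\,\|u_F\| = \mathrm{vol}_g(F)$, using that both sides of the lemma are proportional to $\tilde u_F$. Throughout, $u_F\in N_\mathbb{R}$ plays a double role: as a vector (entering $\tilde u_F = u_F/\|u_F\|$) and as a linear functional on $M_\mathbb{R}$ via the duality $N_\mathbb{R}=M_\mathbb{R}^*$ (which is how it enters the defining relation $\nu_F\wedge u_F=\nu$). The norm $\|u_F\|$ is to be taken with respect to the inner product on $N_\mathbb{R}$ induced by $g$ on $M_\mathbb{R}$.

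Concretely, I would pick a $g$-orthonormal basis $e_1,\dots,e_{n-1}$ of the hyperplane $u_F^\perp\subset M_\mathbb{R}$ parallel to $F$, and complete it by the $g$-unit normal $n_F\in M_\mathbb{R}$ chosen so that $\langle n_F, u_F\rangle>0$. Transporting $u_F$ to $M_\mathbb{R}$ via the isometry induced by $g$ yields a vector along $n_F$ of $g$-norm $\|u_F\|$, so that $\langle n_F, u_F\rangle = \|u_F\|$. Evaluating the equality $\nu_F\wedge u_F=\nu$ on the ordered basis $(e_1,\dots,e_{n-1},n_F)$, only the term in the wedge expansion placing $n_F$ in the $u_F$-slot survives, since $u_F$ annihilates each $e_i$. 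This yields $\pm\,\nu_F(e_1,\dots,e_{n-1})\cdot\|u_F\| = \pm 1$, the right side being $\pm 1$ because $g$ induces $\nu$. Hence $\nu_F$ restricted to the affine span of $F$ equals $1/\|u_F\|$ times the $g$-volume form on $F$, and integrating over $F$ delivers the desired scalar identity.

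There is no real obstruction here; the argument is essentially a bookkeeping exercise. The one delicate point worth emphasizing is the identification of $u_F$ as both a vector and a covector, together with the resulting appearance of the factor $\|u_F\|$ through the isometry $M_\mathbb{R}\simeq N_\mathbb{R}$ supplied by $g$. This factor is precisely the rescaling that converts the lattice-normalized $(n-1)$-form $\nu_F$ into the Riemannian $(n-1)$-volume form on $F$, and keeping careful track of it is essentially the whole content of the lemma.
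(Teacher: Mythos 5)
Your argument is correct: reducing the vector identity to the scalar identity $\mathrm{latvol}(F)\,\|u_F\|=\mathrm{vol}_g(F)$ and then evaluating $\nu_F\wedge u_F=\nu$ on a $g$-orthonormal basis adapted to $\ker u_F$ is exactly the right computation, and the factor $\|u_F\|=\langle n_F,u_F\rangle$ is tracked correctly. The paper offers no proof of this lemma (it is labelled ``elementary''), and your unwinding of the defining relation $\nu_F\wedge u_F=\nu$ is clearly the intended argument.
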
 Here ${\rm vol}_g$ denotes the $(n-1)$-dimensional Euclidean volume of the facet $F$, calculated with respect to the inner product $g$.
From this observation we obtain the counterpart of a classical result of Minkowski, adapted to the case of lattice polytopes.
\begin{proposition}
\label{prop:lattice Minoswski}
Let $u_1,\ldots,u_r\in N$ be distinct primitive lattice elements that span the real vector space $N_\mathbb{R}$ and let $f_1,\ldots, f_r>0$ be positive real numbers. Then there exists a compact and convex polytope in $M_\R$ whose facets have inward normals the elements $(u_i)$ and lattice volumes the $(f_i)$ if and only if 
\begin{eqnarray*}
\sum_{i=1}^rf_iu_i=0.
\end{eqnarray*}
Moreover, such a polytope is unique up to translation.
\end{proposition}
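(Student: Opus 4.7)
The plan is to reduce the proposition to the classical (Euclidean) Minkowski existence-uniqueness theorem for convex polytopes, using the preceding lemma as a dictionary.

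First, I would fix once and for all a positive-definite inner product $g$ on $M_\R$ whose induced volume form equals $\nu$; such a $g$ always exists, for instance by declaring any $\Z$-basis of $M$ to be $g$-orthonormal. With $g$ fixed, set $\tilde u_i := u_i/\|u_i\|_g \in N_\R$. Since the $u_i$ are pairwise distinct primitive lattice vectors, the $\tilde u_i$ are still pairwise distinct (two primitive vectors proportional by a positive scalar must coincide) and they continue to span $N_\R$.

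Next, using the lemma stated immediately before the proposition, I would translate the lattice data into Euclidean data: a polytope with inward facet normal $u_i$ has lattice volume $f_i$ on the $i$-th facet if and only if its $g$-Euclidean facet volume equals $v_i := f_i \|u_i\|_g > 0$. So the problem reduces to finding a compact convex polytope in the Euclidean space $(M_\R, g)$ whose facets have prescribed distinct unit inward normals $\tilde u_i$ and prescribed positive Euclidean volumes $v_i$.

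Then I would invoke the classical Minkowski theorem: such a polytope exists if and only if $\sum_i v_i \tilde u_i = 0$, in which case it is unique up to translation. Substituting $v_i \tilde u_i = f_i u_i$ turns this condition into $\sum_i f_i u_i = 0$, and uniqueness up to translation transfers verbatim to the lattice formulation. The only point deserving a brief check is that each $\tilde u_i$ actually appears as the normal of a genuine facet of the output polytope (rather than being absent or merged with another direction); this is automatic because the prescribed $v_i$ are strictly positive and the $\tilde u_i$ are pairwise distinct and span $N_\R$. Thus the proposition is essentially a dictionary translation of classical Minkowski via the preceding lemma, and there is no substantive obstacle beyond invoking that theorem.
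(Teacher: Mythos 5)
Your proposal is correct and follows essentially the same route as the paper: the paper's proof is exactly the reduction to the classical Minkowski theorem (citing Schneider) via the preceding lemma relating lattice volumes and normalized normals, which you have simply spelled out in more detail.
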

\begin{proof}
 This follows immediately from the classical result (see \cite[p.455]{Sch}) and the previous lemma.
\end{proof}
We deduce the following interesting corollary on intersection theory of toric varieties:
\begin{corollary}
 \label{cor:intersection theoretical Minkowski}
 Let $(\alpha_\rho)_{\rho\in \Sigma(1)}\in \R_{> 0}^{\sharp\Sigma(1)}$. For any ample class $\alpha\in N^1(X)_\R$ denote by $P_\alpha$ the associated polytope. Then the following two statements are equivalent: 
 \begin{itemize}
  \item[i)] There exists an ample class $\alpha\in N^1(X)_\R$ such that for all facets $F\prec P_\alpha$ , $\deg_\alpha(D_F)=\alpha_{\rho_F}$,
  \item[ii)] The following condition is satisfied:
 $$
 \sum_{F\prec P} \alpha_{\rho_F}\: u_F = 0.
 $$
 \end{itemize}

\end{corollary}

 We can now give the proofs of Theorem \ref{theo:Minkowski existence} and Proposition \ref{prop:mink converse}.
\begin{proof}[Proof of Theorem \ref{theo:Minkowski existence}]
Let $\bfi\in \Z^u$.
Let $\check{\mathcal{E}}$ be an equivariant reflexive sheaf on $Y$ and let $\mathcal{E}=\mathfrak{P}_\bfi(\check{\mathcal{E}})$ on $X$, both of rank $r$. Let $\alpha$ be an ample $\mathbb{R}$-divisor on $Y$ that determines a real polytope $\check P\subseteq \check{M}_\mathbb{R}= N_0^\perp\otimes_\mathbb{Z}\mathbb{R}$ whose facets $\check{F}$ have primitive normals $\check u_F=u_{F\cap U}\in N/N_0$ equal to those of $P_Y$.
Then, from Lemma \ref{lem:formula slope toric},
\begin{eqnarray*}
\mu_\alpha(\check{\cE})=-\frac{1}{r}\sum_{ \check F\prec \check P}   i_{\check F}(\det \check{\mathcal{E}})\,\mathrm{deg}_{\alpha}(D_{\check F}).
\end{eqnarray*}
By the genericity assumption on $(G,\gamma)$, from Lemma \ref{lem:genericity lemma}, we deduce a bijective correspondence $F \leftrightarrow \check F$ between facets of $P^s$ and facets of $\check P$. Thus
\begin{eqnarray*}
 \mu_\alpha(\check{\cE})= -\frac{1}{r}\sum_{ F\prec P^s}   i_{\check F}(\det \check{\mathcal{E}})\,\mathrm{deg}_{\alpha}(D_{\check F}),
\end{eqnarray*}
whereas by Lemma \ref{lem:comparison slopes},
\begin{eqnarray*}
\mu_L(\mathfrak{P}_\bfi(\check{\mathcal{E}})) &=& \mu_L(\mathcal{E}),\\
&=& -\frac{1}{r}\sum_{F\prec P}i_F(\det \mathcal{E})\,\mathrm{deg}_L(D_F),\\
&=& -\frac{1}{r}\sum_{F\prec P^s} i_{\check F}(\det \check{\mathcal{E}})b_F\mathrm{deg}_L(D_F) - \sum_{F\prec P^{us}}i_F\mathrm{deg}_L(D_F).
\end{eqnarray*} 
Consider the right hand term of the last equality. We note two points:
 \begin{enumerate}
 \item The second term $\sum_{F\prec P^{us}}i_F\deg_L(D_F) $ is independent of the reflexive sheaf $\check{\mathcal{E}}$. It depends only on the lattice volumes of the unstable facets $F\prec P^{us}$ and on the indices $i_F$ that determine the pull-back functors.
 \item The first term coincides with $\mu_\alpha(\check\cE)$ if for all $F\prec P^s$, $\deg_\alpha(D_{\check F})= b_F\deg_L(D_F)$, which, by Proposition \ref{prop:lattice volume is degree}, is equivalent to $\mathrm{latvol(\check F)}=b_F\deg_L(D_F)$. 
\end{enumerate} 
By Proposition \ref{prop:lattice Minoswski}, we can chose the polytope $\check{P}\subseteq \check{M}_\mathbb{R}$
such that its facets satisfy $\mathrm{latvol}(\check{F})=b_F\deg_L(D_F)$ if and only if 
\begin{eqnarray*}
\sum_{\check F\prec \check  P}b_F\mathrm{deg}_L(D_F)\check u_{ F}=0,
\end{eqnarray*}
which, by Lemma \ref{lem:projection primitive defining facet}, is to say
\begin{eqnarray*}
\sum_{F\prec P^s}\mathrm{deg}_L(D_F)u_F=0 \mod N_0\otimes_\Z \R. 
 \end{eqnarray*}
 Thus, the Minkowski condition implies the existence of the desired polytope and of $\alpha$.
 
 Applying now (\ref{eq:equality slopes theo}) to the sheaf $\cO(D_{\check F})$, using Corollary \ref{cor:comparison degree divisors}, we deduce that $\deg_\alpha D_{\check F}=b_F \deg_L(D_F)$ for all $\check F \subset \check P$. Then, the uniqueness statement of Theorem \ref{theo:Minkowski existence} follows from the equality $\mathrm{latvol}(\check F)=\deg_\alpha(D_{\check F})$ and unicity in Proposition \ref{prop:lattice Minoswski}.
 
As noted above in Proposition \ref{prop:stability equiv subsheaves} (see \cite{DDK,HNS}), to test for stability, it is sufficient to consider equivariant reflexive subsheaves of the form $\check{\mathcal{E}}_W=\mathfrak{K}(\mathbb{E}\cap W)$ where $W$ runs through vector subspaces of $E$. The slopes then satisfy
\begin{eqnarray*}
\mu_L(\mathcal{E})-\mu_L(\mathcal{E}_W)=\mu_\alpha(\check{\mathcal{E}})-\mu_\alpha(\check{\mathcal{E}}_W)
\end{eqnarray*}
from which the equivalence of the stability conditions follows.    
\end{proof}

\begin{proof}[Proof of Proposition \ref{prop:mink converse}  ]The ``if'' statement is given in the previous theorem. We suppose that for some ample $\mathbb{R}$-divisor $\alpha$ on $Y$ the various stability notions are preserved by the pull-back functors. The ample divisor $\alpha$ defines a real polytope $\check P\subseteq \check{M}_\mathbb{R}= N_0^\perp\otimes_\mathbb{Z}\mathbb{R}$ whose facets $\check{F}$ have primitive normals $\check u_{F}=u_{F\cap U}\in N/N_0$.

We consider the functor $\mathfrak{P}_\bfi$ with $i_F=0$ for all unstable facets $F\prec P^{us}$. For any two $F_1,F_2\prec P^s$ consider the direct sum of rank one reflexive sheaves $\mathcal{O}_Y(d_1 D_{\check F_1})\oplus \mathcal{O}_Y(d_2 D_{\check F_2})$ where $d_1=b_{F_2}\deg_L(D_{F_2})$ and $d_2=b_{F_1}\deg_L(D_{F_1})$. Then, using Lemma \ref{lem:comparison line bundes},
\begin{eqnarray*}
\mathfrak{P}_\bfi(\mathcal{O}_Y(d_1 D_{\check F_1})\oplus \mathcal{O}_Y(d_2 D_{\check F_2})) =\mathcal{O}_X(d_1 b_{F_1} D_{F_1 })\oplus \mathcal{O}_X(d_2 b_{F_2}D_{F_2 })
\end{eqnarray*}
is the direct sum of rank-one reflexive sheaves of the same slope. By hypothesis, the initial sheaf must also be $\mu$-polystable so for any two stable facets of $P$, 
\begin{eqnarray*}
\deg_\alpha(b_{F_2}\deg_L(D_{F_2})D_{\check F_1}) =  \deg_\alpha(b_{F_1}\deg_L(D_{F_1})D_{\check F_2})
\end{eqnarray*}
so the expression 
\begin{eqnarray*}
c=b_F\frac{\deg_L(D_F)}{\deg_\alpha(D_{\check F})}
\end{eqnarray*}
is independent of $F\prec P^s$. Then, $\mathrm{latvol}(\check{F})=\deg_\alpha(D_{\check{F}})$ and by the lattice Minkowski theorem (Proposition \ref{prop:lattice Minoswski}),
\begin{eqnarray*}
\sum_{F\prec P^s} \deg_\alpha(D_{\check F}) \check u_{F}=0.
\end{eqnarray*}
That is, by Lemma \ref{lem:projection primitive defining facet},
\begin{eqnarray*}
\frac{1}{c}\sum_{F\prec P^s} b_F\,\deg_L(D_F)\,b_F^{-1}\pi(u_F)=0
\end{eqnarray*}
as desired.
 \end{proof}

\section{Applications}
\label{sec:applications}
In this section, we investigate the Minkowski condition. We show that any $n$-dimensional polarized toric orbifold $(X,L)$ admits at least $n+1$ GIT quotients by $\C^*$-actions that satisfy the Minkowski condition. The associated quotients are weighted projective spaces. We also consider the case of projectivization of torus invariant bundles.

\subsection{Compatible one parameter subgroups}
\label{sec:compatible actions}
Let $(X,L)$ be a polarized projective toric variety of complex dimension $n$ with torus $T=N\otimes_\Z \C^*$ and fan $\Sigma$. When considering subtori $G\subset T$, we will always assume that $G=N_0\otimes_\Z \C^*$, for a saturated sublattice $N_0$.
\begin{definition}
 \label{def:compatible action}
  A subtorus $G\subset T$ is \emph{compatible} with $(X,L)$ if there is a linearization $\gamma$ of $T$ on $L$ such that $(G,\gamma)$ is generic and $((X,L),(G,\gamma))$ satisfies Minkowski condition (\ref{eq:Minkowski condition}).
\end{definition}
We will see that the Minkowski condition is very restrictive, in the sense that in most cases, for fixed $(X,L)$, there are few compatible one dimensional subtori. To see this, set $\mathrm{Div}_{\mathrm{ir}}^T(X)= \lbrace D_\rho, \: \rho \in \Sigma{(1)} \rbrace$ be the set of reduced and irreducible $T$-invariant divisors on $X$. Let $\bfD\subset \mathrm{Div}_{\mathrm{ir}}^T(X)$ be a non empty subset. Assume that there is a one parameter subgroup $G=N_0\otimes_\Z \C^*\subset T$ and a linearization $\gamma$ for $T$ on $L$ such that $(G,\gamma)$ is generic and the set of stable divisors for $(G,\gamma)$ is $\bfD$. Then, if Minkowski condition is satisfied, we have
$$
\sum_{D_\rho \in \bfD} \deg_L(D_\rho)\: u_\rho = 0 \mod N_0\otimes_\Z \R
$$
Consider the element of $N$:
\begin{equation*}
 \label{eq:vector family divisors}
 u_{\bfD}:= \sum_{D_\rho\in \bfD} \deg_L(D_\rho)\: u_\rho.
\end{equation*}
Assume that $u_{\bfD}$ is not zero. Then Minkowski condition implies that 
$$N_0=N\cap(\R\cdot u_\bfD),$$
so that in particular $G$ is entirely determined by $u_\bfD$ and thus by the set $\bfD$. So for a given $\bfD$ with $u_\bfD$ non-zero, there is at most one compatible one-parameter subgroup of $T$ with $\bfD$ as set of stable divisors. As the number of stable divisors is the number of faces of a given polytope in $M_\R$ that intersects the hyperplane $N_0^\perp\otimes_\Z \R$, in the generic situation, we must have $\sharp \bfD\in\lbrace n, \ldots, \sharp \Sigma(1)-1 \rbrace$. Thus we have proved:
\begin{lemma}
\label{lem:restrictive action}
 Let $(X,L)$ be a $n$-dimensional projective polarized toric variety with fan $\Sigma$. Set $d=\sharp \Sigma(1)$ the number of rays of $\Sigma$. Assume that for all subset $\bfD \varsubsetneq \mathrm{Div}_{\mathrm{ir}}^T(X)$ with $\sharp \bfD\in\lbrace n, \ldots, d-1 \rbrace$, we have $u_\bfD\neq 0$. Then,
 the number of one parameter subgroups compatible with $(X,L)$ is bounded by $\sum_{k=n}^{d - 1} \binom{d}{k}$.
\end{lemma}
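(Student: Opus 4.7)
The plan is to construct an injection from the set of compatible one-parameter subgroups of $T$ into the collection of subsets of $\mathrm{Div}_{\mathrm{ir}}^T(X)$ of cardinality in $\{n, \ldots, d-1\}$, and then apply elementary counting. The proof will essentially formalize the sketch given just before the statement.

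First, I will associate to each compatible one-parameter subgroup $G = N_0 \otimes_\Z \C^*$ (equipped with its generic linearization $\gamma$) the set $\bfD$ of $T$-invariant stable divisors of $(X,L)$. By the genericity of $(G,\gamma)$ together with Lemma \ref{lem:genericity lemma}, the elements of $\bfD$ correspond bijectively to the facets of the $(n-1)$-dimensional quotient polytope $P_Y = P \cap U$, where $U = N_0^\perp \otimes_\Z \R$. Since $P_Y$ is complete of dimension $n-1$, it admits at least $n$ facets, giving $\sharp \bfD \geq n$. The upper bound $\sharp \bfD \leq d - 1$ comes from non-triviality of $G$: if every facet of $P$ were stable, then iterating intersections would force every face of $P$ to meet the hyperplane $U$, eventually forcing $P \subset U$ and hence $G$ trivial, a contradiction.

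Second, I invoke Minkowski condition \eqref{eq:Minkowski condition}, which asserts
\begin{equation*}
u_\bfD := \sum_{D \in \bfD} \deg_L(D)\, u_D \in N_0 \otimes_\Z \R.
\end{equation*}
Since $\bfD$ is a proper subset of $\mathrm{Div}_{\mathrm{ir}}^T(X)$ with $\sharp \bfD \in \{n, \ldots, d-1\}$, the standing hypothesis of the lemma yields $u_\bfD \neq 0$. As $G$ is one-dimensional, $N_0 \otimes_\Z \R$ is a line; being forced to contain the nonzero vector $u_\bfD$, it must coincide with $\R \cdot u_\bfD$. Combined with the saturation of $N_0$, this recovers
\begin{equation*}
N_0 = N \cap \R \cdot u_\bfD
\end{equation*}
from the datum of $\bfD$ alone, so the assignment $G \mapsto \bfD$ is injective.

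Consequently, the number of compatible one-parameter subgroups is bounded by the number of subsets of $\mathrm{Div}_{\mathrm{ir}}^T(X)$ of cardinality in $\{n, \ldots, d-1\}$, which is $\sum_{k=n}^{d-1} \binom{d}{k}$. I anticipate the most delicate point to be the justification that, under the standing hypothesis, every compatible $G$ gives rise to a stable-divisor set of size at most $d - 1$ — that is, ruling out the degenerate case where all facets are simultaneously stable. The rigidity argument recovering $N_0$ from $\bfD$ via Minkowski condition is otherwise essentially formal once the combinatorial range for $\sharp \bfD$ is in hand.
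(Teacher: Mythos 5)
Your overall strategy is exactly the one the paper uses in the paragraph preceding the lemma: send a compatible $G=N_0\otimes_\Z\C^*$ (with a witnessing generic linearization) to its set $\bfD$ of stable invariant divisors, use the Minkowski condition (\ref{eq:Minkowski condition}) together with the standing hypothesis $u_{\bfD}\neq 0$ to recover $N_0=N\cap\R\cdot u_{\bfD}$, and then count the admissible sets $\bfD$. That injectivity step is fine, as is the lower bound $\sharp\bfD\geq n$ via Lemma \ref{lem:genericity lemma} and the fact that the $(n-1)$-dimensional polytope $P\cap U$ has at least $n$ facets.

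The gap is exactly where you flagged it, and the argument you offer for $\sharp\bfD\leq d-1$ does not work. Knowing that a facet meets the hyperplane $U$ transversally in its relative interior says nothing about the faces of that facet; there is no ``iteration of intersections''. On the contrary, vertices are \emph{never} stable (a point cannot meet a hyperplane transversally), so under genericity every vertex lies off $U$, and yet every facet can still be cut by $U$: if $P$ is a simplex ($d=n+1$) whose $n+1$ vertices are separated by $U$ into two groups each of size at least $2$, then every facet omits a single vertex, hence contains vertices on both sides of $U$ and is stable, while every face is either stable or unstable, so $(G,\gamma)$ is generic in the sense of Definition \ref{def:generic Cstar}. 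Such a configuration is realizable, e.g.\ for $(\C\P^3,\cO(2))$ with $N_0=\Z\cdot(1,1,-1)$ and a suitable lattice translate of $2\Delta^3$. In that case $\bfD=\mathrm{Div}_{\mathrm{ir}}^T(X)$, so $u_{\bfD}=0$ by the classical Minkowski relation (Corollary \ref{cor:intersection theoretical Minkowski}), the Minkowski condition holds vacuously, $G$ is compatible, and this subgroup escapes your count entirely. To be fair, the paper itself asserts the range $\sharp\bfD\in\lbrace n,\ldots,d-1\rbrace$ ``in the generic situation'' without proof, so you have correctly isolated the weak point of the argument; but your proposed justification does not close it, and the example above indicates that closing it requires an additional hypothesis excluding the ``all divisors stable'' configuration rather than a cleverer proof.
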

For example, the conditions of Lemma \ref{lem:restrictive action} are satisfied by Hirzebruch surfaces $\F_a$ for $a\geq 2$, but not by $\P^1\times \P^1$.
\begin{remark}
\label{rem:picard rank}
 Note that if $X$ is a projective toric orbifold, then from the sequence
 $$
 0 \to M \to \bigoplus_{\rho\in \Sigma(1)} \Z\cdot D_\rho \to \mathrm{Cl}(X) \to 0
 $$
 and from the fact that the Picard group as finite index in the class group $\mathrm{Cl}(X)$, we deduce that $\sharp \Sigma(1)= n + p$, where $p$ is the Picard rank of $X$ (see \cite[Theorem 4.1.3 and Proposition 4.2.7]{CLS}). This gives the bound $\sum_{k=n}^{n+p - 1} \binom{n+p}{k}$ on the number of compatible one-parameter subgroups. In particular, for the complex projective space $\C\P^n$, this bound equals $n+1$. Using Proposition \ref{prop:orbifold to WPP}, we see that for a given dimension, this bound is achieved.
\end{remark}

\subsection{Quotients to weighted projective spaces}
\label{sec:wpp quotients}
In this section we will show:
\begin{proposition}
 \label{prop:orbifold to WPP}
 Let $(X,L)$ be a $n$-dimensional polarized toric orbifold with torus $T$ and associated polytope $P$ and denote by $m$ the number of torus fixed points of $X$. Then, up to replacing $L$ by a sufficiently high power,
 there exist at least $m$ one-parameter subgroups of $T$ compatible with $(X,L)$ giving distinct GIT quotients. The associated GIT quotients are weighted projective spaces.
\end{proposition}
From this proposition, we deduce that one can obtain $\mu$-stable reflexive sheaves on any polarized toric orbifold from $\mu$-stable reflexive sheaves on weighted projective spaces. 
\begin{remark}
As weighted projective spaces have Picard rank $1$, the number of rays of their fans, and thus the number of facets of their polytopes, is the smallest possible for a given dimension (see Remark \ref{rem:picard rank}). Thus, testing stability for reflexive sheaves is simpler on weighted projective spaces. We expect that these varieties could serve as simple bricks to study moduli spaces of equivariant reflexive sheaves on toric orbifolds, and will investigate in this direction in future work.
\end{remark}
\begin{proof}
 The argument is local at a vertex $v\in P$. As $X$ is an orbifold, its fan is simplicial, and thus the set $\lbrace u_F,\: v\in F\rbrace$  is a $\Q$-basis of $M_\Q$. Consider $$\bfD_v:=\lbrace D_F:\: v\in F \rbrace$$ and
 $$
 u_{\bfD_v}=\sum_{D_F\in \bfD_v} \deg_L(D_F) u_F=\sum_{v\in F} \deg_L(D_F) u_F.
 $$
 As all the degrees $\deg_L(D_F)$ are positive, it is clear that $u_{\bfD_v}$ is not zero. Let $N_0=N\cap (\R\cdot u_{\bfD_v})$ and let $G=N_0\otimes_\Z \C^*$. Then, by construction of $u_{\bfD_v}$, up to dilation and translations of $P$, we can assure that $P$ intersects $N_0^\perp$ transversally and precisely along the faces $F$ that contains $v$. Thus, up to scaling $L$ and choosing an appropriate linearization, the set of stable divisors for the $G$ action is $\bfD_v$. By construction, Minkowski condition is satisfied and $G$ is compatible with $(X,L)$. The associated quotient is a weighted projective space as the number of stable facets is $n$, giving $n$ rays for the $(n-1)$-dimensional toric GIT quotient.
 To conclude, the vertices in the polytope correspond to $0$-dimensional orbits, hence to fixed points. Thus, there are at least $m$ such quotients, hence the result.
\end{proof}

\begin{remark}
 \label{rem:CPn}
 From Lemma \ref{lem:restrictive action} and Proposition \ref{prop:orbifold to WPP}, we deduce that the compatible one parameter subgroups for $(\C\P^n,\cO_{\P^n}(1))$ are given by the $\C^*$-actions 
 $$\lambda \cdot [z_0,\ldots, z_n]=[z_0,\ldots, \lambda \, z_i, \ldots, z_n],\: i\in \lbrace 0, \ldots , n\rbrace,$$
 and the associated quotients are all isomorphic to $\C\P^{n-1}$.
\end{remark}

\begin{example}
\label{ex:p1p1p1top2}
 Let $X=\C\P^1\times\C\P^1\times\C\P^1$ together with the polarization $L=\pi_1^*\cO_{\P^1}(4)\otimes\pi_2^*\cO_{\P^1}(4)\otimes\pi_3^*\cO_{\P^1}(4)$ where $\pi_j: X\to \C\P^1$ denotes the projection onto the $j$-th factor. The associated polytope $P$ in $\Z^3\otimes_\Z\R=\R^3$ is a cube with edges of length $4$. Up to translation, that is up to changing the linearisation of the torus action on $L$, we can assume that $P$ is 
 $$
 P=[-3,1]\times[-3,1]\times[-3,1]\subset\R^3.
 $$
 Consider now the one parameter subgroup $G$ of $T$ whose lattice is $N_0=\Z\cdot (1,1,1)$. The GIT quotient of $(X,L)$ by $G$ is the toric variety $Y$ associated to the polytope $U\cap P=(1,1,1)^\perp\cap P$, that is
 $$
 P_Y=\lbrace (x,y,z)\in\R^3\,:\,\: x+y+z=0,\: x,y,z\in[-3,1]\rbrace.
 $$
 We recognize the polytope of $\C\P^2$, so that $Y\simeq \C\P^2$. The stable facets in $P^s$ are given by $F_x=\lbrace x=1 \rbrace \cap P$, $F_y=\lbrace y=1 \rbrace \cap P$ and $F_z=\lbrace z=1 \rbrace \cap P$. The degree of the associated invariant divisors $D_x, D_y$ and $D_z$ is equal to $16$ with respect to  $L$, so the Minkowski condition becomes
 $$
 16 u_{D_x} + 16 u_{D_y} + 16 u_{D_z}\in N_0\otimes_\Z\R=\R\cdot(1,1,1).
 $$
 As $u_{D_x}=(-1,0,0)$, $u_{D_y}=(0,-1,0)$ and $u_{D_z}=(0,0,-1)$, the Minkowski condition is satisfied. By Theorem \ref{theo:intro}, and from the fact that, up to scaling, the only polarization on $\C\P^2$ is $\cO_{\P^2}(1)$, an equivariant reflexive sheaf $\check \cE$ on $\C\P^2$ is slope stable with respect to $\cO_{\P^2}(1)$ if and only if $\fP_\bfi(\check\cE)$ is slope stable on $X$ with respect to $L$. For example, the tangent sheaf $\cT_{\C\P^2}$ is slope stable with respect to $\cO_{\P^2}(1)$ (see e.g. \cite[Theorem 1.3]{HNS}). Thus we obtain a slope stable rank $2$ equivariant reflexive sheaf on $(X,L)$ by setting $\cE=\fP_{\bf 0}(\cT_{\C\P^2})$. Note that this equivariant sheaf is not locally free, and hence $\fP_{\bf 0}$ doesn't preserve local freeness in general. To see this, recall from \cite{Kl} that the family of filtrations defining $\cT_{\C\P^2}$ is given by $\lbrace (T^{\check F}(i))\subset  N_Y\otimes_\Z \C: \check F\prec P_Y, i\in \Z\rbrace$ where
 $$ 
 T^{\check F}(i)=\left\{\begin{array}{ccc}
                0 & \: \mathrm{ if } \: & i\leq -2\\
                \mathrm{Vect}(\check u_F)& \: \mathrm{ if } \: & i=-1\\
              N_Y\otimes_\Z \C & \: \mathrm{ if } \: & i\geq 0.
               \end{array}
               \right.
 $$
 We compute $b_{F_x}=b_{F_y}=b_{F_z}=1$ here, so that the family of filtrations associated to $\cE$ is $\E=\lbrace ( E^{F}(i))\subset  E: F\prec P, i\in \Z \rbrace$, where in particular $E^{F}(i)=T^{F\cap U}(i)$ for $F\in\lbrace F_x, F_y, F_z \rbrace$. Klyachko's criterion for $\cE$ to be locally free imposes a condition on the family of filtrations $\E$ for each face of $P$ (see e.g. \cite[Theorem 5.22]{perl} for the statement with increasing filtrations). In particular, for the vertex $(1,1,1)\in P$, this condition states that there should be a $\Z^3$-grading of $E=N_Y\otimes_\Z \C$:
 $$
 E=\bigoplus_{m\in \Z^3} E_m
 $$
 such that for each face $F\prec P$ 
 containing $(1,1,1)$, one has
 $$
 E^F(i)=\sum_{\langle m,u_F\rangle\leq i} E_m.
 $$
 As for such faces, $\check u_F=\pi(u_F)$, where $\pi: \Z^3 \to N_Y=\Z^3/\Z\cdot(1,1,1)$ denotes the projection map, and $E^F(i)=T^{U\cap F}(i)$, we deduce that we would have 
 \begin{eqnarray*}
 E_{(1,0,0)}=\mathrm{Vect}(\check u_{F_x}),\: E_{(0,1,0)}=\mathrm{Vect}(\check u_{F_y}),E_{(0,0,1)}=\mathrm{Vect}(\check u_{F_z}).
 \end{eqnarray*}
 But then $\dim(E)\geq 3$, which is absurd as $\dim(E)=\rk(N_Y)=2$. Hence $\E$ doesn't satisfy Klyachko's criterion and $\cE$ is not locally free.
\end{example}

\subsection{Projectivization of toric vector bundles}
\label{sec:proj bundles}
 
As a motivating example, we consider the variety given as the projectivization of a torus invariant vector bundle. We refer to \cite[Appendix A]{La} and \cite[Section 7.3]{CLS} for the construction and classical results. Let $Y$ be a projective toric variety with fan $\Sigma_Y$. For $i=1,\ldots,r$, let $D_i=\sum_{\rho\in \Sigma_Y(1)} a_{i\rho}D_\rho$ be invariant divisors on $Y$ and let $V_\cF$ be the vector bundle associated to the locally free sheaf
\begin{eqnarray*}
\cF=\cO_Y\oplus\cO_Y(D_1)\oplus\cdots\oplus\cO_Y(D_r). 
\end{eqnarray*}
Let $X=\mathbb{P}(V_\cF^\vee)$ be the projectivization of the dual  of $V_\cF$, with $\pi:X\to Y$ the projection to the base $Y$. Then $X$ is a projective toric variety, with torus $T_X=T_Y\times (\mathbb{C}^*)^r$. Denote by $N_X, N_Y$ the lattices of $X$ and $Y$, and by $M_X$ and $M_Y$ their duals. Then $N_X=N_Y\times \Z^r$ and we have exact sequences
\begin{eqnarray*}
& 0\longrightarrow  \mathbb{Z}^r\longrightarrow N_Y\times \mathbb{Z}^r\longrightarrow N_Y \longrightarrow 0,&\\
 &0\longrightarrow M_Y\longrightarrow M_Y\times(\mathbb{Z}^r)^*\longrightarrow (\mathbb{Z}^r)^*\longrightarrow 0.&
 \end{eqnarray*}
 
 Let $L_Y$ be an ample line bundle on $Y$ and let $\mathcal{O}_X(1)=\cO_{\P(V_\cF)}(1)$ be the Serre line bundle on $X$. Then, up to replacing $L_Y$ by a sufficiently high power, $L_X=\pi^*L_Y\otimes \mathcal{O}_X(1)$ is an ample line bundle on $X$. Assume now that $L_Y=\cO(D_Y)$
 for an ample divisor on $Y$
 $$D_Y=\sum_{\rho\in\Sigma_Y(1)} b_\rho D_\rho,$$
 with associated lattice polytope
 $$
 P_Y=\{m\in (M_Y)_\mathbb{R}\,:\, \langle m,u_\rho\rangle\geq -b_\rho\: \textrm{ for all } \rho\in\Sigma_Y(1)\}.
 $$
 To determine a polytope $P_X\subseteq (M_Y)_\mathbb{R}\times \mathbb{R}^r$ associated to $(X,L_X)$, we first describe the invariant irreducible divisors of $X$. They are of two types:
 $$
 \mathrm{Div}_{irr}^T(X)=\lbrace \pi^{-1}(D_\rho): \rho\in \Sigma_Y(1)\rbrace \cup \lbrace\{s_i =0\}\textrm{ for } i=0,1,\ldots,r \rbrace
 $$
 where the $\lbrace s_i = 0 \rbrace$ are the relative hyperplane sections associated to the line subbundles of $V_\cE^\vee$. Set $\hat D_\rho = \pi^{-1}(D_\rho)$ for all $\rho\in \Sigma_Y(1)$ and $D_{s_i}:=\lbrace s_i = 0 \rbrace$ for all $i\in \lbrace 0, \ldots , r\rbrace$. Then $L_X=\cO(\hat D)$ with 
 $$
 \hat D = \sum_{\rho\in\Sigma_Y(1)} b_\rho \hat D_\rho + \sum_{i=0}^r D_{s_i}
 $$
 and associated polytope $P_X$ defined by $(m_y,m_x)\in (M_Y)_\R\times (\R^r)^*$ is in $P_X$ if and only if 
 \begin{eqnarray*}
 \langle m_x,e_i\rangle \geq -1,  \text{ for all }i=0,\ldots,r, \textrm{ and }\\
 \langle (m_y,m_x), u_\rho +\sum_{i=1}^r a_{i\rho}e_i\rangle \geq -b_\rho,  \text{ for all }\rho\in\Sigma_Y(1),
 \end{eqnarray*}
 where $\{e_i:i=1,\ldots , r\}$ is the standard basis of $\mathbb{R}^r$ and $e_0=-(e_1+\ldots + e_r)$.
 
 We will perform a GIT quotient of $X$ by the torus $G=N_0\otimes_\Z \C^*$, where $N_0=\lbrace 0_Y \rbrace \times\mathbb{Z}^r$. Consider the linearization $\gamma_y$ for $T_Y$ on $L_Y$ giving the polytope $P_Y$ (recall Section \ref{sec:toricGIT}, equation (\ref{eq:linearisation sections})). Consider the standard linearization $\gamma_x$ of $(\C^*)^r$ on $\cO_{\C\P^r}(1)$ associated to the polytope
 $$
 \lbrace \langle m_x,e_i\rangle \geq -1,  \text{ for all }i=0,\ldots,r \rbrace \subset (\R^r)^*.
 $$
 Combining the two gives the linearization $\gamma= \gamma_y \times \gamma_x$ of $T_X$ on $L_X$ associated to $P_X$. Then, setting $U=N_0^\perp\otimes_\mathbb{Z}\mathbb{R}$, we have $U=(M_Y)_\R \times \lbrace 0 \rbrace$. In particular, the only facets of $P_X$ that intersect $U$ are those given by equations $\langle m_y,u_\rho\rangle +\sum_i a_{i\rho}\langle m_x,e_i\rangle=-b_\rho$ for some $\rho\in \Sigma_Y(1)$ and the polytope $P_X \cap U$ is $P_Y\times\lbrace 0 \rbrace$. That is, the GIT quotient of $(X,L_X)$ by $G$ is $(Y,L_Y)$. We also note that $(G,\gamma)$ is generic.
 
\begin{proposition} 
\label{prop:Minkowski for proj bundles}
The polarized toric variety $(X,L_X)$, with toric subgroup $G=(\mathbb{C}^*)^r\subseteq T_X$ and linearization $\gamma$ satisfies the Minkowski condition of Theorem \ref{theo:Minkowski existence}. 
\end{proposition}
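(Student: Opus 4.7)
The plan is to reduce the Minkowski condition for $((X,L_X),(G,\gamma))$ to a general toric identity on the base $Y$. I will first identify the stable facets of $P_X$: from the description of $P_X$ above, the facets meeting $U=(M_Y)_\R\times\{0\}$ transversally are exactly those indexed by $\rho\in \Sigma_Y(1)$, corresponding to the divisors $\hat D_\rho=\pi^{-1}(D_\rho)$ with primitive normals
$$u_{\hat D_\rho}=u_\rho+\sum_{i=1}^r a_{i\rho}\,e_i\in N_X=N_Y\times\Z^r$$
(primitivity being automatic since $u_\rho$ is already primitive in $N_Y$). Modulo $N_0\otimes_\Z\R=\{0\}\times\R^r$, these vectors reduce to $u_\rho\in (N_Y)_\R$, so the Minkowski condition in this setting becomes the identity
$$\sum_{\rho\in \Sigma_Y(1)}\deg_{L_X}(\hat D_\rho)\,u_\rho=0\quad\text{in }(N_Y)_\R.$$

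Next I would rewrite the degrees via the projection formula. Since $\hat D_\rho=\pi^*D_\rho$,
$$\deg_{L_X}(\hat D_\rho)=L_X^{n-1}\cdot\pi^*D_\rho=\pi_*(L_X^{n-1})\cdot D_\rho=\beta\cdot D_\rho,$$
where $\beta:=\pi_*(L_X^{n-1})\in A_1(Y)$ is a $1$-cycle class on $Y$ that does not depend on $\rho$. The natural temptation would be to apply the classical Minkowski theorem (Proposition \ref{prop:lattice Minoswski}) to the polytope $P_Y$; this only works, however, when $\beta$ is proportional to $L_Y^{d-1}$, which in general it is not. The substantive step is therefore to use a more general toric identity rather than the lattice Minkowski theorem itself.

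The required identity is that for \emph{any} $1$-cycle class $\beta$ on a complete toric variety $Y$ of dimension $d$,
$$\sum_{\rho\in\Sigma_Y(1)}(\beta\cdot D_\rho)\,u_\rho=0\quad\text{in }(N_Y)_\R.$$
This follows by intersecting the principal divisor relation $\sum_\rho\langle m,u_\rho\rangle D_\rho=0$ in $A_{d-1}(Y)$ with $\beta$, obtaining $\langle m,\sum_\rho(\beta\cdot D_\rho)u_\rho\rangle=0$ for every $m\in M_Y$, and concluding by nondegeneracy of the pairing $M_Y\times N_Y\to\Z$. Applied to $\beta=\pi_*(L_X^{n-1})$, this yields the displayed Minkowski condition for $X$ and concludes the proof. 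No serious obstacle remains; the only points requiring care are the primitivity of the facet normals $u_{\hat D_\rho}$ and the correct application of the projection formula for the $\mathbb P^r$-bundle $\pi:X\to Y$.
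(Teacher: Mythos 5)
Your proof is correct, but it takes a genuinely different route from the paper's. The paper simply applies its Corollary \ref{cor:intersection theoretical Minkowski} to $(X,L_X)$ itself, obtaining $\sum_{F\prec P_X}\deg_{L_X}(D_F)\,u_F=0$ over \emph{all} facets, and then observes that the unstable normals $e_0,\dots,e_r$ lie in $N_0\otimes_\Z\R$, so the sum over stable facets already vanishes modulo $N_0\otimes_\Z\R$; no projection formula and no identification of the stable normals is needed. You instead push the computation down to $Y$: you identify the stable normals as $u_\rho+\sum_i a_{i\rho}e_i$, reduce them mod $N_0$, rewrite $\deg_{L_X}(\hat D_\rho)$ as $\beta\cdot D_\rho$ with $\beta=\pi_*(L_X^{n-1})$, and invoke the identity $\sum_\rho(\beta\cdot D_\rho)\,u_\rho=0$ valid for an arbitrary $1$-cycle class. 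Your route is longer but buys something real: it explains \emph{why} the condition holds (the stable degree vector is the Minkowski-type vector of $Y$ with respect to the $1$-cycle $\pi_*(L_X^{n-1})$ rather than $L_Y^{\dim Y-1}$), which connects directly to the paper's observation that the class $\alpha$ on $Y$ produced by Theorem \ref{theo:Minkowski existence} is in general not $L_Y$ (cf.\ Lemma \ref{lem:orbisurface alpha}). One point deserves care: on a singular $Y$ the divisors $D_\rho$ need only be Weil, so the pairing $\beta\cdot D_\rho$ for an abstract $1$-cycle class $\beta$ and the step ``intersect the relation $\sum_\rho\langle m,u_\rho\rangle D_\rho\sim 0$ with $\beta$'' are not defined in general. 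The clean fix is to perform the whole computation on $X$: flat pullback along $\pi$ preserves rational equivalence, so $\sum_\rho\langle m,u_\rho\rangle[\hat D_\rho]=0$ in $A_{n-1}(X)$ for every $m\in M_Y\subset M_X$, and capping with $c_1(L_X)^{n-1}$ gives $\sum_\rho\langle m,u_\rho\rangle\deg_{L_X}(\hat D_\rho)=0$, hence the claim by nondegeneracy of the pairing. With that rephrasing your argument is complete.
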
  
\begin{proof}
 The line bundle $L_X$ determines a polytope in $(M_X)_\mathbb{R}$ whose normal fan equals the fan of $X$. By Corollary \ref{cor:intersection theoretical Minkowski}, we see that
 \begin{eqnarray*}
 \sum_{F\prec P_X}\deg_{L_X}(D_F)\,u_F=0,
 \end{eqnarray*}
and so 
\begin{eqnarray*}
\sum_{F\prec P_X^{s}}\deg_{L_X}(D_F)\,u_F=-\sum_{F\prec P_X^{us}}\deg_{L_X}(D_F)\,u_F.
\end{eqnarray*}
Note that the unstable divisors are precisely the $D_{s_i}$ for $i\in \lbrace 0,\ldots r \rbrace$.
Moreover, the normal elements $u_F$, for $F\prec P_X^{us}$, are given by the basis elements $e_i\in\mathbb{R}^r$ together with $e_0$. Then, $\pi(e_i)=0\in N_Y=N_X/\mathbb{Z}^r$ and hence 
$$\sum_{F\prec P_X^{s}}\deg_{L_X}(D_F)\,u_F=0  \mod\ N_0\otimes_\mathbb{Z}\mathbb{R}$$
as desired.
\end{proof} 
 The GIT quotient map $p:X^s\to Y$ coincides with the restriction $\pi|_{X^s}$ so it makes sense to compare two methods of pulling back sheaves from $Y$ to $X$. Given a $T$-equivariant reflexive sheaf $\cE$ on $Y$, for each $\bfi\in\mathbb{Z}^{r+1}$, we have invariant reflexive sheaves $\pi^*\cE$ and $\mathfrak{P}_\bfi(\cE)$ on $X$.
\begin{proposition}
\label{prop:comparison pullback and functors}
Taking $\bfi=0$, if $\check\cE$ is a $T$-equivariant reflexive sheaf on $Y$, we have
\begin{eqnarray*}
\pi^*\check\cE=\mathfrak{P}_0(\check\cE). 
\end{eqnarray*}
\end{proposition}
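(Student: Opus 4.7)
The plan is to show that $\pi^*\check\cE$ and $\mathfrak{P}_0(\check\cE)$ possess the same family of filtrations at every ray of $\Sigma_X$; by the equivalence of categories $\fK$ of Section \ref{sec:torictorsionfreesheaves}, since both sheaves are $T_X$-equivariant and reflexive (the former by flatness of $\pi$), this will force them to be isomorphic. The rays of $\Sigma_X$ split into the stable rays $\tau^X_\rho=u_\rho+\sum_i a_{i\rho}e_i$ indexed by $\rho\in\Sigma_Y(1)$, and the unstable rays $e_0,\ldots,e_r$ in $\{0\}\times\Z^r\subset N_X$, so I treat the two types separately.

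For a stable ray $\tau^X_\rho$, its image under the projection $N_X\to N_Y=N_X/N_0$ is $u_\rho$, already primitive; by Lemma \ref{lem:projection primitive defining facet}, the corresponding stable facet $F$ has $b_F=1$. Plugging into the definition of $\mathfrak{P}_0$ yields $E^F(i)=\check E^\rho(i)$. On the other hand, since $\pi|_{X^s}$ coincides with the GIT quotient map $p$, we have $(\pi^*\check\cE)|_{X^s}=p^*\check\cE$, and Lemma \ref{lem:pullb functors} computes the filtration of the latter at any stable facet as $\check E^\rho(\lfloor i/b_F\rfloor)=\check E^\rho(i)$. The filtration of an equivariant reflexive sheaf at a facet $F$ depends only on the sheaf near the divisor $D_F$, and every stable divisor lies in $X^s$, so the two filtrations agree.

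The key case is an unstable ray $\tau^X=e_j$. The geometric point is that the affine chart $U_{\tau^X}=X\setminus\bigl(\bigcup_{\rho\in\Sigma_Y(1)}\hat D_\rho\cup\bigcup_{i\neq j}D_{s_i}\bigr)$ contains no pullback divisor, so $U_{\tau^X}\subseteq \pi^{-1}(T_Y)$ and $\pi|_{U_{\tau^X}}$ factors through the open torus $T_Y\subseteq Y$. Since $\check\cE|_{T_Y}\cong \check E\otimes_\C\mathcal{O}_{T_Y}$ is freely generated, I deduce $\pi^*\check\cE|_{U_{\tau^X}}\cong \check E\otimes_\C\mathcal{O}_{U_{\tau^X}}$, whose weight decomposition
\[
\Gamma(U_{\tau^X},\pi^*\check\cE)=\bigoplus_{m\in(\tau^X)^\vee\cap M_X}\check E\otimes\chi^m,
\]
compared with Klyachko's formula (\ref{eq:sheaf from family of filtrations}), gives $E^{F_{s_j}}(i)=\check E$ for $i\geq 0$ and $0$ for $i<0$, matching $\mathfrak{P}_0(\check\cE)$ at the unstable facet $F_{s_j}$ with $\bfi=0$. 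This unstable computation is the heart of the argument; once the inclusion $U_{\tau^X}\subseteq\pi^{-1}(T_Y)$ is recognized, triviality of the filtration is immediate, but absent this geometric observation one would have to analyze directly how the pulled-back sheaf extends across the section $D_{s_j}\subset X$.
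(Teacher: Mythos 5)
Your proof is correct and follows essentially the same route as the paper's: both compare the Klyachko filtrations facet by facet, use Lemma \ref{lem:pullb functors} together with $b_F=1$ on the stable facets, and handle the unstable facets by observing that the affine chart of the ray $e_j$ maps into the open torus $T_Y$ (the paper phrases this via $\bar\pi(e_j)=0$, you via the absence of pullback divisors in the chart), where $\check\cE$ is free and the filtration is therefore trivial. No gaps.
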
 
\begin{proof} Suppose that $\check{\mathcal{E}}=\mathfrak{K}(\check{\mathbb{E}})$ where $ \check{\mathbb{E}}=  \{ (E^{\check{F}}(i))\subset E\,:\, \check{F}\prec P_Y,\,i\in\mathbb{Z}  \}$. Then, $\pi^*\check{\mathcal{E}}$ and $\mathfrak{P}_0(\check{\mathcal{E}})$ are determined up to isomorphism by their respective filtrations. On one hand, $\mathfrak{P}_0(\check{\mathbb{E}})= \{ (E^{F}(i))\subset E\,:\, F\prec P_X,\,i\in\mathbb{Z}  \}$ is given by the filtration
\begin{eqnarray*}
E^F(i)= \left\{
\begin{array}{ccc}
 E^{F\cap U}(i) & \text{ for } & F\prec P^s,\, i\in \mathbb{Z},\\
0 & \text{ for } & F\prec P^{us} ,\,  i<0,\\
E & \text{ for } & F\prec P^{us},\, i\geq 0.
\end{array}
\right.
 \end{eqnarray*}   
 On the other hand, by Lemma \ref{lem:pullb functors}, for all $F\prec P_X^s$, observing that $b_F=1$, we have $(\pi^*\check{\mathbb{E}})^F(i)=E^{F\cap U}(i)$. Let $F_i\prec P_X^{us}$ be the unstable facet corresponding to $D_{s_i}$. The morphism $\pi:X\to Y$ corresponds to a lattice homomorphism $\bar{\pi}:N_X\to N_Y$. From the fact that $\bar{\pi}$ sends the normal generator $u_{F_i}=e_i$ of the ray $\rho_{F_i}$ to $0\in N_Y$, we deduce that the image of the affine chart $\pi(U_{\sigma_{F_i}})$ is $T_Y\subseteq Y$. Hence, 
 \begin{eqnarray*}
 \Gamma(U_{\sigma_{F_i}},\pi^*\check{\cE}) = \Gamma(T_Y,\check{\cE})\otimes_{\mathbb{C}[M_Y]}\mathbb{C}[\rho_{F_i}^\vee\cap M_X] = E\otimes \mathbb{C}[\rho_{F_i}^\vee\cap M_X]
 \end{eqnarray*}
 from which it follows that 
\begin{eqnarray*}
(\pi^*\check{\mathbb{E}})^{F_i}(j)=
\begin{cases}
0 & \text{ for } j<0,\\
E & \text{ for } j\geq 0,
\end{cases} 
 \end{eqnarray*}
 allowing us to conclude that $\pi^*\cE=\mathfrak{P}_0(\cE)$. 
 \end{proof}
 From Propositions \ref{prop:Minkowski for proj bundles} and \ref{prop:comparison pullback and functors}, together with Theorem \ref{theo:Minkowski existence}, we obtain
 \begin{proposition}
 \label{prop:ample class proj bundles}
There exists an ample class $\alpha$ on $Y$ such that an equivariant reflexive sheaf $\mathcal{E}$ on $Y$ is $\mu$-stable with respect to $\alpha$ if and only if $\pi^*\mathcal{E}$ is $\mu$-stable on $X$ with respect to $L_X$.
 \end{proposition}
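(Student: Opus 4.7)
The plan is to combine the three preceding results in a direct way. Since Proposition \ref{prop:Minkowski for proj bundles} establishes that $((X,L_X),(G,\gamma))$ satisfies the Minkowski condition of Theorem \ref{theo:Minkowski existence}, I would start by invoking that theorem to obtain an ample class $\alpha \in N^1(Y)_\R$ and, for every choice of $\bfi \in \Z^u$, an equality of the form
\[
\mu_{L_X}(\fP_\bfi(\check\cE)) = \mu_\alpha(\check\cE) - \sum_{F \prec P^{us}} i_F\,\deg_{L_X}(D_F),
\]
valid for every $\check\cE \in \Ref^T(Y)$. In particular, by Theorem \ref{theo:Minkowski existence}, $\fP_\bfi(\check\cE)$ is $\mu$-stable on $(X,L_X)$ if and only if $\check\cE$ is $\mu$-stable on $(Y,\alpha)$.

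Next I would specialize to $\bfi = 0$ and apply Proposition \ref{prop:comparison pullback and functors}, which identifies $\fP_0(\check\cE)$ with the ordinary pullback $\pi^*\check\cE$. Combining this identification with the equivalence of stability provided by Theorem \ref{theo:Minkowski existence}, we immediately conclude that $\pi^*\check\cE$ is $\mu$-stable on $(X,L_X)$ if and only if $\check\cE$ is $\mu$-stable on $(Y,\alpha)$, which is the desired statement.

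Since the proof is essentially a concatenation of already established results, there is no genuine obstacle; the only subtlety worth remarking on is the consistency of the setup, namely that the ample class $\alpha$ produced by Theorem \ref{theo:Minkowski existence} is the \emph{same} class that makes the identification $\pi^*\check\cE = \fP_0(\check\cE)$ yield a meaningful stability comparison. This is automatic because Theorem \ref{theo:Minkowski existence} provides a class $\alpha$ that works simultaneously for every sheaf $\check\cE \in \Ref^T(Y)$ and every index $\bfi$, so in particular it works for $\bfi = 0$. I would also note, as is done in Remark \ref{rem:ample class downstairs intro}, that this class $\alpha$ need not coincide with $L_Y$.
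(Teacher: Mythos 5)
Your proposal is correct and follows exactly the paper's argument: the paper derives Proposition \ref{prop:ample class proj bundles} by combining Proposition \ref{prop:Minkowski for proj bundles}, Theorem \ref{theo:Minkowski existence}, and the identification $\pi^*\check\cE=\fP_0(\check\cE)$ from Proposition \ref{prop:comparison pullback and functors}, just as you do. The consistency remark about $\alpha$ being independent of $\bfi$ is a nice touch but, as you note, is already built into the statement of Theorem \ref{theo:Minkowski existence}.
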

We conclude with a special case where the ample class $\alpha$ can be computed explicitly, that is for $Y$ of dimension $2$ and $V_\cF$ of rank $3$. 
It seems unlikely that a similar method could be used to compute the ample class $\alpha$ for higher dimensions and rank. Nevertheless, the examples below show that in general, $\alpha$ is different from the polarization induced by the GIT quotient.
\begin{lemma}
 \label{lem:orbisurface alpha}
 With previous notations, assume in addition that $Y$ is an orbifold of dimension $2$ and that $V_\cF$ is of rank $3$. Then the class $\alpha$ from Proposition \ref{prop:ample class proj bundles} is equal to $c_1(L_Y^3\otimes \det(V_\cF))$, up to scale.
\end{lemma}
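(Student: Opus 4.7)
The plan is to identify the class $\alpha$ produced by Proposition \ref{prop:ample class proj bundles} with $c_1(L_Y^3 \otimes \det V_\cF)$ by computing the pushforward $\pi_*(L_X^3)$ via the projection formula together with the standard Chern class relation on a projective bundle, and then invoking the uniqueness clause of Theorem \ref{theo:Minkowski existence}.

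First I would recall from the proof of Theorem \ref{theo:Minkowski existence} that $\alpha$ is characterized, up to positive scalar, by the identities $\deg_\alpha(D_{\check F}) = b_F \deg_{L_X}(D_F)$ as $F$ ranges over the stable facets of $P_X$, with $\check F = F\cap U$. In the projectivized bundle setting the stable facets of $P_X$ are precisely the preimages $\hat D_\rho = \pi^{-1}(D_\rho)$ for $\rho \in \Sigma_Y(1)$; their primitive normals $u_\rho + \sum_i a_{i\rho} e_i \in N_X$ project to the primitive vectors $u_\rho \in N_Y$, so Lemma \ref{lem:projection primitive defining facet} yields $b_F = 1$. It therefore suffices to establish $\deg_\alpha(D_\rho) = \deg_{L_X}(\hat D_\rho)$ for every $\rho \in \Sigma_Y(1)$ when $\alpha = c_1(L_Y^3 \otimes \det V_\cF)$.

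Next, by the projection formula $\deg_{L_X}(\hat D_\rho) = D_\rho \cdot \pi_*(L_X^3)$, so the problem reduces to computing $\pi_*(L_X^3)$. Writing $L_X = \pi^*L_Y + H$ with $H = c_1(\cO_X(1))$ and expanding trinomially, the term $\pi^*(L_Y^3)$ vanishes since $L_Y^3 = 0$ on the surface $Y$, while $3\pi^*(L_Y^2)\cdot H$ pushes forward to $3 L_Y^2 \cdot \pi_*(H) = 0$ because $\pi_*(H) \in A_3(Y) = 0$. The remaining contribution is $\pi_*(L_X^3) = 3 L_Y \cdot \pi_*(H^2) + \pi_*(H^3)$. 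The $\mathbb{P}^2$-bundle identities $\pi_*(H^2) = 1$ and $\pi_*(H^3) = c_1(V_\cF)$, the latter obtained from the Grothendieck relation $H^3 = \pi^* c_1(V_\cF) H^2 - \pi^* c_2(V_\cF) H + \pi^* c_3(V_\cF)$ on $X = \mathbb{P}(V_\cF^\vee)$ combined with $\pi_*(1) = \pi_*(H) = 0$, then give $\pi_*(L_X^3) = 3 L_Y + c_1(V_\cF) = c_1(L_Y^3 \otimes \det V_\cF)$, as required.

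The delicate step, in my view, is fixing the sign convention for $\cO_X(1)$ on $X = \mathbb{P}(V_\cF^\vee)$: depending on whether $\mathbb{P}$ is read as parametrizing quotients of $V_\cF$ or hyperplanes in $V_\cF^\vee$, one could end up with $\pi_*(H^3) = \pm c_1(V_\cF)$. I would pin this down by computing a couple of lattice facet volumes of $P_X$ directly from the polytope description given above in a small example (say $Y = \mathbb{P}^2$ with $V_\cF$ a sum of line bundles), which forces the correct convention unambiguously and cross-checks the final formula.
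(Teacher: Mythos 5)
Your proof is correct and follows essentially the same route as the paper: both identify $\alpha$ by the condition $\deg_\alpha(D_\rho)=\deg_{L_X}(\hat D_\rho)$ (with $b_F=1$) and then evaluate $\deg_{L_X}(\hat D_\rho)$ using the Grothendieck relation in the cohomology of the projective bundle, concluding via Whitney's formula that the answer is $3c_1(L_Y)+c_1(V_\cF)$. Your packaging of the computation as $\pi_*(L_X^3)=3L_Y+c_1(V_\cF)$ via the projection formula is only a cosmetic reorganization of the paper's direct intersection computation on $X$.
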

\begin{proof}
 As in that setting $Y$ and $X$ are orbifolds, we can make use of Poincar\'e duality to compute intersections, and degrees. From the proof of Theorem \ref{theo:Minkowski existence}, we see that $\alpha$ is the ample class that satisfies for all $\rho\in \Sigma_Y(1)$,
 $$
 \deg_\alpha(D_\rho)=\deg_{L_X}(\hat D_\rho).
 $$
 We compute the later using that
 $H^2(X,\Z)$ is the algebra over $H^2(Y,\Z)$ generated by the class $\xi:=c_1(\cO_X(1))$, with relation $\xi^3=\pi^*c_1(V_\cF)\cdot \xi^2 - \pi^*c_2(V_\cF)\cdot \xi + \pi^*c_3(V_\cF)$. Taking into account the dimensions, and the rank of $V_\cF$, a direct computation gives
 $$
 \deg_{L_X}(\hat D_\rho)=c_1(L_X)^3\cdot \pi^*c_1(\cO(D_\rho))=\pi^*(3\,c_1(L_Y)+c_1(V_\cF))\cdot \pi^*c_1(\cO(D_\rho))\cdot \xi^2.
 $$
The result follows from Whitney's formula.
\end{proof}

 \bibliographystyle{plain}	

 \bibliography{ClaTipBib}{}

\end{document}